\definecolor{light-gray}{gray}{0.95}
\def\centerarc[#1](#2)(#3:#4:#5){\draw[#1] ($(#2)+({#5*cos(#3)},{#5*sin(#3)})$) arc (#3:#4:#5);}
\newcommand*\circled[1]{\tikz[baseline=(char.base)]{
            \node[shape=circle,draw,inner sep=1pt] (char) {#1};}}
\newtheorem{theorem}{Theorem}[section]
\newtheorem{lemma}[theorem]{Lemma}
\newtheorem{proposition}[theorem]{Proposition}
\newtheorem{corollary}[theorem]{Corollary}
\newtheorem{remark}[theorem]{Remark}
\newtheorem{definition}{Definition}[section]
\renewcommand\vec[1]{\overrightarrow{#1}}
\newcommand\vecleft[1]{\overleftarrow{#1}}
\numberwithin{equation}{section}
\newcommand{\mc}[1]{{\mathcal #1}}
\newcommand{\bb}[1]{{\mathbb #1}}
\renewcommand{\epsilon}{\varepsilon}
\newcommand{\R}{\mathbb R}
\newcommand{\Z}{\mathbb Z}
\newcommand{\N}{\mathbb N}
\renewcommand{\P}{\mathbb P}
\newcommand{\E}{\mathbb E}
\title[Convergence to the  SBE from a degenerate dynamics]{Convergence to the stochastic Burgers equation\\ from a  degenerate microscopic dynamics}
\author{Oriane Blondel}
\address{\noindent CNRS, Univ Lyon, Universit\'e Claude Bernard Lyon 1, CNRS UMR 5208, Institut Camille Jordan,
43 boulevard du 11 novembre 1918 – 69622, France}
\email{blondel@math.univ-lyon1.fr}
\author{Patr\'{\i}cia Gon\c{c}alves}
\address{\noindent Center for Mathematical Analysis,  Geometry and Dynamical Systems,
Instituto Superior T\'ecnico, Universidade de Lisboa,
Av. Rovisco Pais, 1049-001 Lisboa, Portugal.}
\email{patricia.goncalves@math.tecnico.ulisboa.pt}
\author{Marielle Simon}
\address{\noindent \'Equipe MEPHYSTO, Inria Lille -- Nord Europe, 40 avenue du Halley,
59650 Villeneuve d'Ascq, France. }\email{marielle.simon@inria.fr}
\begin{document}
\begin{abstract}
In this paper we prove the convergence to the stochastic Burgers equation from one-dimensional interacting particle systems, whose dynamics allow the degeneracy of the jump rates. To this aim, we provide a  new proof of the second order Boltzmann-Gibbs principle introduced in \cite{gj2014}. The main technical difficulty  is that our models exhibit configurations that do not evolve under the dynamics - the \emph{blocked configurations} - and are locally non-ergodic. Our proof does not impose any knowledge on the  spectral gap for the microscopic models. Instead, it  relies on the fact that, under the equilibrium measure, the probability to find a blocked configuration in a finite box is exponentially small in the size of the box. Then, a dynamical mechanism allows to exchange particles even when the jump rate for the direct exchange is zero.
\end{abstract}

\maketitle


\section{Introduction}
In the last few years there has been an intense research activity around the  derivation of the stochastic Burgers equation (SBE), or its integrated counterpart, namely the Kardar-Parisi-Zhang (KPZ) equation, from one-dimensional weakly asymmetric conservative interacting particle systems. The KPZ equation goes back to  \cite{KPZ} where it has been proposed as the default stochastic partial differential equation (SPDE) ruling  the evolution of the profile of a randomly growing interface. If $h(t,x)$ denotes the height of the interface at time $t$ and position $x$ , then the KPZ equation reads as \[dh(t,x) = A\Delta h(t,x)\; dt + B(\nabla h(t,x))^2 \; dt + \sqrt C \; {d\mc W_t},\]
where $A$, $B$ and $C$ are constants  depending on the thermodynamical quantities of the system (see \cite{GJ4} for instance), $\Delta$ and $\nabla$ are, respectively, the Laplacian and derivative operators 
and $ \mc W_t$ is a {Brownian motion}. The SBE can  be obtained from the KPZ equation, at least formally, by taking the space derivative of $h(t,x)$, namely, $\mathcal{Y}_t
= \nabla h_t,$ so that $\mathcal{Y}_t$ solves the SBE given by
\[d\mathcal{Y}_t = A \Delta\mathcal{Y}_t \; dt + B \nabla(\mathcal{Y}_t)^2 \; dt + \sqrt C  \; \nabla d\mc W_t.\]
 Mathematically, the KPZ equation is ill-posed, the problematic term being  $(\nabla h(t,x))^2$, which is not well-defined if $h$ has the regularity we expect for a solution. A  way to solve this equation is to consider its Cole-Hopf solution, which solves a 
stochastic heat equation with a multiplicative noise. Since the latter equation is linear, its solutions can be constructed and uniqueness as well as existence of solutions can be easily obtained. The problem of uniqueness at the level of the KPZ equation is much more complicated and it has not been proved yet that the Cole-Hopf solutions do, in fact, solve the KPZ equation in some reasonable way.  
In the last couple of years, Hairer has developed a meaningful notion of solutions for the KPZ equation
and proved uniqueness of such solutions with periodic boundary conditions, see \cite{Hai,Hai2}. However, as far as we know, it is still  not known how to use this notion to prove a meaningful convergence for our kind of microscopic systems to the equation, and it is not what we will be using here.

In the seminal paper \cite{BerG}, it is shown that Cole-Hopf solutions of the SBE equation can be obtained as a scaling
limit of  the weakly asymmetric simple exclusion process. The approach there consists in performing the Cole-Hopf transformation at the microscopic level, which had been already introduced in \cite{Gar,DG}, in order to obtain, respectively, the propagation of chaos and  the non-equilibrium fluctuations of the same model. The microscopic Cole-Hopf transformation, similarly to what happens at the macroscopic level, solves a linear
equation and the problem of the non-linearity is completely avoided under this transformation. 
The drawback of this method is that it relies on the possibility to perform a microscopic 
Cole-Hopf transformation, which cannot be done for most models.
A way to overcome this difficulty has been proposed in \cite{gj2014}, where the results of \cite{BerG} have been extended to general weakly asymmetric exclusion processes without requiring the possibility of a microscopic Cole-Hopf transformation. The key ingredient in the authors' approach is the derivation of a \emph{second order Boltzmann-Gibbs principle} (BGP2), which allows to make the non-linear term of the SBE equation emerge from the underlying microscopic dynamics. Technically, this principle is obtained by a multi-scale argument which consists in replacing a local function by another function whose support increases at each step and whose variance decreases in order to make the errors eventually vanish. This argument had already been used in \cite{G,G1} to show the triviality of the fluctuations of the asymmetric simple exclusion and the asymmetric zero-range process. The BGP2  is shown to hold for a large class of weakly asymmetric exclusion processes in \cite{gj2014,GJS}, and for zero-range, non-degenerate kinetically constrained models and other models starting from a state close to equilibrium in \cite{GJS}. We note that none of 
the aforementioned models admits a microscopic Cole-Hopf transformation and therefore the methods of \cite{BerG}
could not be used there.

In \cite{gj2014}, a notion of solution to the SBE appears as a natural consequence of the estimates obtained through the BGP2: it is called \emph{energy solution}. More precisely, the authors write the martingale problem for the microscopic dynamics and perform the replacements allowed by the BGP2. Then they find that a microscopic version of the SBE is satisfied, and therefore any limit has to be a solution of the macroscopic SBE, which also satisfies certain energy estimates that come from the BGP2. In order to complete the picture, it remained until very recently to prove uniqueness of such energy solutions. {This happened in \cite{GubPer}, where the authors prove uniqueness, but using the notion of stationary controlled solutions of the SBE which had  been stated previously in \cite{GubJar} and which is, apparently, a bit more restrictive than the notion of energy solution introduced in \cite{gj2014,GJS}.} We note however that, it is quite easy to check that the extra conditions in the notion of  stationary controlled solutions of \cite{GubJar,GubPer} are also satisfied for the dynamics in \cite{gj2014,GJS}, see, for example, \cite{GS} and references therein. As a consequence of that, in \cite{GS} it is introduced a notion of stationary energy solutions of the SBE which is equivalent to the notion of stationary controlled solutions of \cite{GubJar,GubPer}.  We prefered to opt for this notion of solutions since the required conditions arise very naturally from the underlying microscopic models that we consider. So that is why here we will use this stronger notion of stationary energy solutions  of \cite{GS} to show convergence to the SBE, which will then be a consequence of the second order Boltzmann-Gibbs principle.
  To sum up, the methods of \cite{gj2014,GJS,GS} have then opened a new way to obtain the convergence to the  SBE from general interacting particle systems which satisfy some constraints at the level of the rates. 

 Let us recap under which conditions the BGP2 has been proved so far.
  In \cite{gj2014} it is assumed that the models satisfy a sharp  bound on the spectral gap, which is uniform on the density of particles, and that the rates are bounded from below and above by a positive constant. In \cite{GJS} it is assumed that the models satisfy a spectral gap bound which does not need to be uniform on the density of particles and the jump rates can vanish in the scaling limit. More recently in \cite{FGS,GS} a new proof of the BGP2 has been introduced: it permits to extend the previous results to models which do not need to satisfy a spectral gap bound, as, for example, the symmetric exclusion with a slow bond \cite{FGS}. In all the aforementioned works it is assumed that the symmetric version of the models is gradient, that is, its instantaneous current is written as the gradient of some local function. Nevertheless,  the possible degeneracy of the rates is admitted in none of these works. 
We note that in \cite{GLT} (respectively \cite{Nag}), for the collection of models that we consider and  which have degenerate rates, a lower bound estimate was obtained for the spectral gap, on a regime of high (respectively low) densities. We note, however, that even combining the two mentioned estimates, it is not possible to use the approach of \cite{gj2014} or \cite{GJS} to obtain our results, since in both works the 
non-degeneracy of the rates is crucial.

In this article we give a step towards generalizing the previous results, since our main contribution is that we prove the convergence to the  SBE from  microscopic dynamics that allow degenerate exchange rates. This goes towards showing that the more recent approach  of \cite{FGS,GS} to proving BGP2 is more flexible to situations which lack uniformity. 
We consider kinetically constrained lattice gases as in \cite{GJS,GLT}, but under the presence of states which do not evolve under the dynamics. These models have been introduced  in the 
physical literature since the eighties. In the case where the rates are symmetric the hydrodynamic limit has been derived in \cite{GLT} (under restrictions on the initial density profile) and it is given by the \emph{porous medium equation}. The  dynamics of our models can be described as follows. Fix an integer $m\geq 2$.  The process of configurations $\eta$ evolves on $\{0,1\}^\mathbb{Z}$ and we say there is a particle at $x$ (resp. $x$ is empty) if $\eta(x)=1$ (resp. $\eta(x)=0$). If there is a particle at the site $x \in \bb Z$ and, and if $x+1$ (resp. $x-1$) is unoccupied, then it jumps to $x+1$ (resp. $x-1$) at rate $p(1)c^m_{x,x+1}(\eta)$ (resp. $p(-1)c^m_{x,x-1}(\eta)$). Here, $c^m_{x,x+1}(\eta)=c^m_{x+1,x}(\eta)$ represents a constraint: the jump takes place only if there are at least $m-1$ particles around $x$, see \eqref{eq:rate} for a precise definition. We consider the weakly asymmetric version of the model of \cite{GLT}, so that above $p(\pm 1)=\frac12\pm \frac{b}{2n^\gamma}$, where $b\in\bb R$, $\gamma\geq \frac12$ and $n\in\bb N$ is a scaling parameter. The last parameter $\gamma$ rules the strength of the asymmetry in the model and $b$ is just a constant that we can choose as being non-zero to turn the model weakly asymmetric. As hinted above, we prove the BGP2 for this model. As a consequence, we obtain a crossover from the Edwards-Wilkinson universality class to the KPZ universality class when the strength of the asymmetry $\gamma$ varies, at the level of density fluctuations. More precisely, we show that for $\gamma>\frac12$ the equilibrium density fluctuations are Gaussian and given by a generalized Ornstein-Uhlenbeck process and for $\gamma=\frac12$ they are non-Gaussian and given by the energy solution of the  SBE. 

We now comment the distinctive features of our dynamics. The first one is that the symmetric version of the model ($b=0$) is gradient, see \eqref{grad_cond_sym}. The second is that the jump rates are zero if the constraint is not satisfied. In particular, there are configurations that never evolve under the dynamics (\emph{blocked configurations}): those where no group of $m-1$ particles is at distance two or less from another particle. As a consequence, we have several invariant measures: the Bernoulli product measure with a constant parameter $\rho\in{[0,1]}$ that we denote by $\nu_\rho$, but also any Dirac measure supported on a blocked configuration. In the following, we consider $\rho\in{(0,1)}$ and $\nu_\rho$ will be our reference measure. Fortunately, almost surely under $\nu_\rho$, there is no blocked configuration. The most important ingredient for us is the existence of a \emph{mobile cluster} inside finite boxes, which occurs with very high probability, see \eqref{eq:decaysigmam}. This mobile cluster is a collection of $m$ neighbouring particles, which can move through the system on their own (if the $m$ particles keep together, the jump constraints are always satisfied) and can be used as a vehicle to transport a particle from a site $x$ to another site $y$. This mechanism (see Section \ref{sec:blocked} for a more precise description) {has been introduced in \cite{GLT}} and is going to help us overcome the degeneracy of the rates.

Here follows an outline of this paper. In Section \ref{sec:replacement} we introduce notation and we state all the properties we impose on the microscopic models. In Section \ref{sec:main_results} we state the notion of energy solutions of the SBE equation, for which uniqueness has been proved and we also state the crossover from the Ornstein-Uhlenbeck process to the energy solution of the  SBE. In Section \ref{sec:BG} we prove the BGP2 and in Section \ref{sec:corollary} we derive some simple consequences of this principle.

\section{The microscopic dynamics}\label{sec:replacement}

\subsection{Description of the model}

Fix an integer $m\geq 2$. We consider  a collection of microscopic dynamics of exclusion type, which may be thought of as microscopic models for the porous medium equation given in \eqref{eq:porous}  and were studied  in \cite{GLT} and references therein. These are Markov processes that we denote by $\{\eta_t^n\,;\, t\geq 0\}$, which have  state space  $\mc E: = \{0,1\}^\Z$ and  whose infinitesimal generator $\mathcal{L}_n^m$ is defined below. For any $\eta \in \mc E$ and $x\in\bb Z$, we represent by $\eta(x)$  the occupation variable at site $x$. 

Hereafter and above, $n\in\bb N$ is a scaling parameter that shall go to infinity. In order to define the transition kernel of the dynamics, we need two real parameters $\gamma, b$  such that
$\gamma\geq \frac12$. For any $x,y\in\bb Z$, we denote by $\eta^{x,y}$ the configuration obtained from $\eta$ after exchanging the occupation values $\eta(x)$ and $\eta(y)$ as follows:
\[(\eta^{x,y})(z)=\begin{cases} \eta(x) & \text{if }z=y, \\
\eta(y) & \text{if }z=x, \\
\eta(z) & \text{otherwise}.
\end{cases} \]
We say that a function $f:\mc E\to \R$ is \emph{local} if it depends on $\eta \in \mc E$ only through a finite number of coordinates.  
 With these notations,  $\mathcal{L}_n^m$ acts on local functions $f:\mc E\to \R$ as 
\begin{equation*}
(\mathcal{L}_n^mf)(\eta)=\sum_{x,y\in\mathbb{Z}\atop |x-y|=1}c_{x,y}^m(\eta)p(y-x)\eta(x)(1-\eta(y))\big(f(\eta^{x,y})-f(\eta)\big),
\end{equation*}
where $ p(x)=0$ for any $ |x|\neq 1$,  $p(\pm 1) =\frac12\pm \frac{b}{2n^\gamma},$
and $c_{x,y}^m$ represent constraints that are given by \begin{equation} 
c_{x,x+1}^m(\eta)=c_{x+1,x}^m(\eta)=\sum_{k=1}^m \prod_{\substack{j=-(m-k)\\ j \neq 0,1}}^{k} \eta(x+j). \label{eq:rate}
\end{equation}
In other words, each  particle waits, independently of the others, a random time exponentially distributed, and  jumps to  unoccupied nearest neighbour sites when at least $m-1\geq 1$ neighbouring sites are full.   We note that for $m=2$, the constraint is just $ 
c_{x,x+1}^{m}(\eta)=\eta(x-1)+\eta(x+2),$
 and for simplicity of notation we shall focus the presentation of our result for this choice of $m$. Whenever $m=2$, we drop it from the notation and along the paper we will explain how to adapt all the arguments to any integer value of $m$.

For $b=0$, which corresponds to the symmetric version of the model,  the \emph{hydrodynamic limit}\footnote{We refer to \cite{kl, Spohn} for reviews on the subject.} corresponds to the one-dimensional porous medium equation given by  \begin{equation} \label{eq:porous}\partial_t\rho(t,u)=\Delta(\rho^m)(t,u), \qquad (t,u) \in \bb R_+\times \bb R,\end{equation} where $\Delta$ represents the usual Laplacian operator.  
This has been proved in \cite{GLT}, in the case where the initial density profile is uniformly bounded away from $0$ and $1$. For more general initial profiles, the authors need to perturb the model, by adding jumps of the symmetric simple exclusion which allows to get rid of the degeneracy of the system. In the following we denote by $D(\rho):= m \rho^{m-1}$ the \emph{diffusion coefficient} associated with the porous medium equation given by  \eqref{eq:porous}.

\subsection{Invariant measures}

Due to the presence of constraints in the jump rates, the  system is not ergodic, since it admits  an infinite number of invariant measures: in particular, they contain the Dirac measures supported on configurations which cannot evolve under the dynamics (more details are exposed in Section \ref{sec:blocked} below).  

Consider now  $\rho\in(0,1)$, and let $\nu_\rho$ be the Bernoulli product measure on $\mc E$ with marginal at site $x$ given by
$
\nu_\rho\{\eta \; ; \; \eta(x)=1\}=\rho.
$
It is not difficult to see that $\nu_\rho$ is invariant for our model: for any local function $f:\mc E\to\bb R$, we have
$ \int_{\mc E} (\mc L_n^m f)(\eta)\; \nu_\rho(d\eta)=0.$ 
Let $\mathcal{D}(\R_+,\mc E)$ be the Skorohod space, that is, the space of $\mc E$-valued  trajectories which are continuous from the right with left limits and endowed with the weak topology. Let $\P_{\rho}^n$ be the probability measure on $\mathcal{D}(\R_+,\mc E)$ induced by the initial state $\nu_\rho$ and $\{\eta^n_{tn^2}\; ; \; t\geq 0\}$ be the accelerated Markov process  whose time evolution is described by the generator $n^2 \mathcal{L}_n^m$. The corresponding expectation is denoted by $\E^n_{\rho}$.

Let us fix once and for all $\rho \in (0,1)$, and introduce some notations: for any measurable $f:\mc E\to\bb R$, let $\nu_\rho(f)$ be the average of $f$ with respect to the measure $\nu_\rho$. If $f=\mathbf{1}_A$ is an indicator function of a Borel set $A\subset \mc E$, we simply denote $\nu_\rho(A):=\nu_\rho(\mathbf{1}_A)$. For any $x\in\bb Z$, we denote by $\tau_x f$ the  function obtained by translation as follows: $\tau_x f(\eta):=f(\tau_x \eta),$ where $(\tau_x\eta)(y) = \eta(x+y),$ for $y\in\bb Z.$
We also denote by $\bar{f}$ the \emph{recentered} function defined as
$
\bar{f}(\eta):=f(\eta)-\nu_\rho(f).
$ 
Finally, we  define $\chi(\rho)$ as the static compressibility of the system, namely $\chi(\rho):=\nu_\rho((\eta(0)-\rho)^2)=\rho(1-\rho)$.

\subsection{Blocked configurations} \label{sec:blocked}
In this section we describe more precisely the mechanisms at play in the dynamics between particles. For simplicity, we restrict the presentation to $m=2$.

Note  first that any configuration $\eta$ in which the distance between any two occupied sites is bigger than $2$ satisfies the following property: all the exchange rates $\eta(x)(1-\eta(y))c_{x,y}(\eta)$ are equal to zero. Such a configuration is called a \emph{blocked configuration} and a Dirac measure supported on it is an invariant measure for this process. 

If the configuration contains somewhere a pair of particles at distance at most $2$, we call them a \emph{mobile cluster}. It has the property that there exists a sequence of nearest neighbour jumps with positive rates (which we call \emph{allowed path}) which permits to shift the mobile cluster to any other position on $\bb Z$. As noted in \cite{GLT}, another crucial property of this mobile cluster is that, for any $x,y$ that do not belong to the cluster, there exists an {allowed path} that transports the cluster to the vicinity of $x,y$ and it permits to exchange $\eta(x)$ with $\eta(y)$ while restoring the other occupation variables to their initial value (in other words, a sequence of transitions with positive rates that turn $\eta$ into $\eta^{ x,y}$). Moreover, we can choose this allowed path in such a way that no bond is used more than six times. For a general $m\geq 3$ we have a very similar situation to the one above, but now  we need a group of $m$ particles with at most one hole inside the group in order to make the exchange $\eta(x)$ into $\eta(y)$, and on the allowed path  we use at most $2(m+1)$ times the same bond in order to make the exchange. We refer to Figure \ref{fig:allowedmoves} for an illustration in the case $m=2$.

\begin{figure}
\centering
\begin{tikzpicture}
\draw (-3,0) -- (3.5,0);
\foreach \a in {-2.5,-2,...,3}
   \draw[thick] (\a,-0.1) -- (\a,0.1);

\draw (-2,-0.3) node[anchor=north] {$x$};
\draw (2.5,-0.3) node[anchor=north] {$y$};

\filldraw[black, thick] (-2,0) circle (4pt) ;
\filldraw[black, thick] (-1,0) circle (4pt) ;
\filldraw[black, thick] (0,0) circle (4pt) ;
\filldraw[black, thick, fill=white] (2.5,0) circle (4pt) ;
\draw[color=violet, thick] (-1.3,-0.2) -- (-1.3,-0.4) -- (0.3,-0.4) -- (0.3,-0.2);
\draw[color=violet] (-0.5,-0.4) node[anchor=north] {\small mobile cluster};

\draw (5.2,0.3) node[anchor=north] {\emph{initial configuration}};


\draw (-3,-2) -- (3.5,-2);
\foreach \a in {-2.5,-2,...,3}
   \draw[thick] (\a,-2.1) -- (\a,-1.9);

\draw (-2,-2.3) node[anchor=north] {$x$};
\draw (2.5,-2.3) node[anchor=north] {$y$};

\filldraw[black, thick] (-2,-2) circle (4pt) ;
\filldraw[black, thick] (-1,-2) circle (4pt) ;
\filldraw[black, thick] (0,-2) circle (4pt) ;
\filldraw[black, thick, fill=white] (2.5,-2) circle (4pt) ;

\centerarc[thick,<-,color=violet](-1.75,-1.8)(0.3:180:0.3);

\draw[color=violet] (-1.75,-1.55) node[anchor=south] {\circled{\bf 1}};

\draw (5,-1.7) node[anchor=north] {\emph{allowed moves}};


\draw (-3,-3.5) -- (3.5,-3.5);
\foreach \a in {-2.5,-2,...,3}
   \draw[thick] (\a,-3.6) -- (\a,-3.4);

\draw (-2,-3.8) node[anchor=north] {$x$};
\draw (2.5,-3.8) node[anchor=north] {$y$};

\filldraw[black, thick] (-1.5,-3.5) circle (4pt) ;
\filldraw[black, thick] (0,-3.5) circle (4pt) ;
\filldraw[black, thick] (-1,-3.5) circle (4pt) ;
\filldraw[black, thick, fill=white] (2.5,-3.5) circle (4pt) ;

\centerarc[thick,<-,color=violet](-0.75,-3.3)(0.3:180:0.3);

\draw[color=violet] (-0.75,-3.05) node[anchor=south] {\circled{\bf 2}};


\draw (-3,-5) -- (3.5,-5);
\foreach \a in {-2.5,-2,...,3}
   \draw[thick] (\a,-5.1) -- (\a,-4.9);

\draw (-2,-5.3) node[anchor=north] {$x$};
\draw (2.5,-5.3) node[anchor=north] {$y$};

\filldraw[black, thick] (-1.5,-5) circle (4pt) ;
\filldraw[black, thick] (0,-5) circle (4pt) ;
\filldraw[black, thick] (-0.5,-5) circle (4pt) ;
\filldraw[black, thick, fill=white] (2.5,-5) circle (4pt) ;

\centerarc[thick,<-,color=violet](-1.25,-4.8)(0.3:180:0.3);

\draw[color=violet] (-1.25,-4.55) node[anchor=south] {\circled{\bf 3}};

\centerarc[thick,<-,color=violet](0.25,-4.8)(0.3:180:0.3);

\draw[color=violet] (0.25,-4.55) node[anchor=south] {\circled{\bf 4}};

\draw[thick] (-4,-6) node[anchor=south] {\bf \large \vdots};
\draw[thick] (-3.5,-6) node[anchor=south] {\bf \large etc.};

%
\draw (-3,-6.5) -- (3.5,-6.5);
\foreach \a in {-2.5,-2,...,3}
   \draw[thick] (\a,-6.6) -- (\a,-6.4);

\draw (-2,-6.8) node[anchor=north] {$x$};
\draw (2.5,-6.8) node[anchor=north] {$y$};

\filldraw[black, thick] (0,-6.5) circle (4pt) ;
\filldraw[black, thick] (-1,-6.5) circle (4pt) ;
\filldraw[black, thick] (2.5,-6.5) circle (4pt) ;
\filldraw[black, thick, fill=white] (-2,-6.5) circle (4pt) ;

\draw (5,-6.2) node[anchor=north] {\emph{final configuration}};

\end{tikzpicture}
\caption{Illustration of allowed moves using the mobile cluster, in order to exchange $\eta(x)$ with $\eta(y)$.}
\label{fig:allowedmoves}
\end{figure}
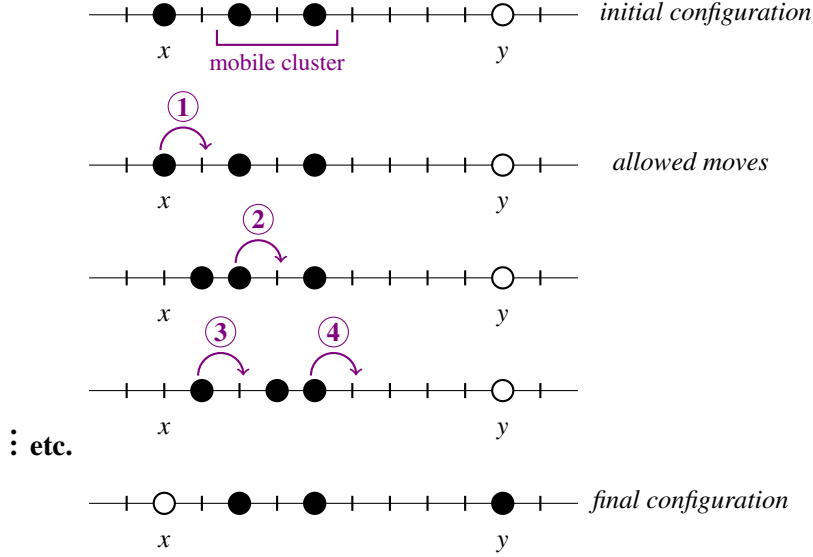

Under $\nu_\rho$, almost surely, there are no blocked configurations. However, given a spatial region of interest, the closest mobile cluster might be very far, and thus virtually useless. In order to control this, we introduce the notion of \emph{good} and \emph{bad} configurations inside a finite box. Fix $x\in\bb Z$, $\ell\in \bb N$ and let $\Lambda_x^\ell:=\{x+1,...,x+\ell\}$ be the box of size $\ell$ to the right of $x$. We define
\begin{align*}
\mc G_{\ell}(x) & := \Big\{ \eta \in \mc E  \; ; \; \sum_{y=x+1}^{x+\ell-2} \big(\eta(y)\eta(y+1) + \eta(y)\eta(y+2)\big)+\eta(x+\ell-1)\eta(x+\ell) >0 \Big\}\\
&\;=\Big\{ \eta\in \mc E \; ; \; \eta \text{ contains a mobile cluster in } \Lambda_x^\ell\Big\},
\end{align*} and therefore its complementary set is 
\[
\mc E \backslash \mc G_{\ell}(x) = \Big\{ \eta \in \mc E\; ; \; \sum_{y=x+1}^{x+\ell-2}  \big(\eta(y)\eta(y+1) + \eta(y)\eta(y+2)\big)+\eta(x+\ell-1)\eta(x+\ell)  =0 \Big\}.
\] 
Note that, under $\nu_\rho$, the probability of the bad set $\mc B_{\ell}(x)$ rapidly decreases to 0 {as $\ell\to \infty$} (see also \cite{GLT}). Indeed, when $\rho \in (0,1)$:
\begin{equation}\label{eq:decaysigma}
\nu_\rho\big({\mc E \backslash \mc G_{\ell}(x)}\big) \le (1-\rho^{2})^{\lfloor\ell/2\rfloor}.
\end{equation}
For general $m$, introducing similar definitions for bad boxes $\mc B^m_{\ell}(x)$, we have
\begin{equation}\label{eq:decaysigmam}
\nu_\rho\big({\mc E \backslash \mc G_{\ell}(x)}\big) \le (1-\rho^{m})^{\lfloor\ell/m\rfloor}.
\end{equation}

\subsection{Density current}

 The generator $\mathcal{L}_n^m$ can be decomposed in the Hilbert space $\mathbb{L}^2(\nu_\rho)$  into its symmetric and antisymmetric parts. More precisely, we write $\mathcal{L}_n^m=\mathcal{S}_n^m+\mathcal{A}_n^m$, where $\mathcal{S}_n^m=(\mathcal{L}_n^m+(\mathcal{L}_n^m)^*)/2$ and $\mathcal{A}_n^m=(\mathcal{L}_n^m-(\mathcal{L}_n^m)^*)/2$, 
with $(\mathcal{L}_n^m)^*$ being the adjoint of $\mathcal{L}_n^m$ in $\mathbb{L}^2(\nu_\rho)$. By the conservation law, for any $x\in\bb Z$, there exists a local function $j^{n,m}_{x,x+1}$ defined on $\mc E$ such that
\begin{equation*}
\mathcal{L}_n^m\eta(x)=j^{n,m}_{x-1,x}(\eta)-j^{n,m}_{x,x+1}(\eta),
\end{equation*}
and $j^{n,m}_{x,x+1}$ is called the instantaneous current of the system at the bond $\{x,x+1\}$. To fix notation we denote \begin{equation*}
\mathcal{S}_n^m\eta(x)=j^{n,m,{\bf s}}_{x-1,x}(\eta)-j^{n,m,{\bf s}}_{x,x+1}(\eta)\quad\textrm{and}\quad \mathcal{A}_n^m\eta(x)=j^{n,m,{\bf a}}_{x-1,x}(\eta)-j^{n,m,{\bf a}}_{x,x+1}(\eta),
\end{equation*}
so that $j^{n,m}_{x,x+1}$ is the sum of the \emph{symmetric current} $j^{n,m,{\bf s}}_{x,x+1}$ and the \emph{antisymmetric current} $j_{x,x+1}^{n,m,{\bf a}}$. 

\begin{remark}\label{rem}
In the definitions above, the currents are defined only up to an additive constant (not depending on the density), which will not matter in the following argument.
\end{remark} 

\begin{proposition}\label{prop:current} 
There exist \begin{itemize}
\item a {local} function $h^m:\mc E\to\mathbb{R}$, 
\item  and $m+1$ {local} functions $\mathbf{P}_k:\mc E \to \bb R$ (which do not depend on $n$), where, for any $k\in\{1,...,m+1\}$, $\mathbf{P}_k$ is a homogeneous multivariate polynomial of degree $k$ in the recentered variables $\{\bar\eta(-m+1),...,\bar\eta(m)\},$
\end{itemize}
such that, for every $x\in\mathbb{Z}$, \begin{align}
\label{grad_cond_sym}
\bar{j}_{x,x+1}^{n,m,\bf s}(\eta)&=\tau_xh^m(\eta)-\tau_{x+1}h^m(\eta),\\ 
\label{decomp_asy}\bar{j}_{x,x+1}^{n,m,\bf a}(\eta)&=n^{-\gamma}\; \sum_{k=1}^{m+1} \tau_x\mathbf{P}_k(\eta).
\end{align}
\end{proposition}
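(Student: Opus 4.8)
The plan is to compute the currents explicitly for $m=2$ and then extract the claimed decomposition directly from the algebraic form of the rates. First I would write down the action of the generator on the occupation variable. Starting from
\[
(\mathcal L_n^m f)(\eta)=\sum_{|x-y|=1}c_{x,y}(\eta)p(y-x)\eta(x)(1-\eta(y))\big(f(\eta^{x,y})-f(\eta)\big),
\]
and taking $f=\eta(z)$, only the bonds $\{z-1,z\}$ and $\{z,z+1\}$ contribute, and each contributes $\pm 1$ times the indicator that the corresponding oriented exchange is possible. Collecting terms across the bond $\{x,x+1\}$ one obtains, using $p(\pm 1)=\tfrac12\pm\tfrac{b}{2n^\gamma}$, a current of the form
\[
j_{x,x+1}^{n}(\eta)=c_{x,x+1}(\eta)\Big[\tfrac12\big(\eta(x)-\eta(x+1)\big)-\tfrac{b}{2n^\gamma}\big(\eta(x)+\eta(x+1)-2\eta(x)\eta(x+1)\big)\Big],
\]
where $c_{x,x+1}(\eta)=\eta(x-1)+\eta(x+2)$ for $m=2$. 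The symmetric part is the coefficient of $\tfrac12$ and the antisymmetric part is the coefficient of $-\tfrac{b}{2n^\gamma}$; this is the natural splitting into $\mathcal S_n^m$ and $\mathcal A_n^m$, and one checks it agrees with the $\mathbb L^2(\nu_\rho)$-definition by a short reversibility computation (for $b=0$ the dynamics is reversible, so $\mathcal S_n^m=\mathcal L_n^0$).

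Next, for the gradient condition \eqref{grad_cond_sym}, I would exhibit the local function $h^m$. For $m=2$ the symmetric current is
\[
j_{x,x+1}^{n,{\bf s}}(\eta)=\tfrac12\big(\eta(x-1)+\eta(x+2)\big)\big(\eta(x)-\eta(x+1)\big),
\]
and one verifies by inspection that this equals $\tau_x h(\eta)-\tau_{x+1}h(\eta)$ with, for instance, $h(\eta)=\tfrac12\big(\eta(-1)\eta(0)+\eta(0)\eta(1)+\eta(1)\eta(2)\big)$ or a suitable variant — the precise choice is pinned down by expanding the telescoping difference and matching monomials; only finitely many monomials appear so this is a routine check. (This is exactly the known gradient structure of the porous medium model from \cite{GLT}, rewritten in current form.) Passing to the recentered currents $\bar j^{n,{\bf s}}_{x,x+1}=j^{n,{\bf s}}_{x,x+1}-\nu_\rho(j^{n,{\bf s}}_{x,x+1})$ only shifts $h$ by an additive constant, which is harmless by Remark \ref{rem}; for general $m$ one uses the analogous gradient representation from \cite{GLT}, the function $h^m$ being an explicit quadratic-through-degree-$m$ local function of the occupation variables.

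For \eqref{decomp_asy}, I would take the antisymmetric current
\[
j_{x,x+1}^{n,{\bf a}}(\eta)=-\tfrac{b}{2n^\gamma}\big(\eta(x-1)+\eta(x+2)\big)\big(\eta(x)+\eta(x+1)-2\eta(x)\eta(x+1)\big),
\]
substitute $\eta(y)=\bar\eta(y)+\rho$ everywhere, expand, and subtract the mean to recenter. The prefactor $n^{-\gamma}$ is already manifest; after expansion each monomial is a product of between one and three recentered variables $\bar\eta(x+j)$ with $j\in\{-1,0,1,2\}$ — note the product of four distinct occupation variables cannot occur because $\eta(x)+\eta(x+1)-2\eta(x)\eta(x+1)=(\eta(x)-\eta(x+1))^2$ contributes degree at most two in $\{\eta(x),\eta(x+1)\}$ while $\eta(x-1)+\eta(x+2)$ contributes degree one, for a total of at most degree $m+1=3$. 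Grouping the expanded expression by homogeneous degree $k\in\{1,2,3\}$ defines local functions $\mathbf P_k$ supported on $\{\bar\eta(-1),\dots,\bar\eta(2)\}$ (after translating back so that $j^{n,{\bf a}}_{x,x+1}=n^{-\gamma}\sum_k\tau_x\mathbf P_k$), with no degree-zero term since we recentered; the $\mathbf P_k$ are $\rho$-dependent through the coefficients of lower-degree monomials absorbed from higher ones, but do not depend on $n$. For general $m$ the same scheme works: the rate $c^m_{x,x+1}$ is a sum of products of $m-1$ occupation variables among $\{\eta(x-m+2),\dots,\eta(x+m-1)\}$ (times the hole/particle factor across $\{x,x+1\}$), and $\eta(x)+\eta(x+1)-2\eta(x)\eta(x+1)$ adds degree at most two, giving total degree at most $m+1$ on the window $\{-m+1,\dots,m\}$.

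I do not anticipate a serious obstacle — the statement is essentially a bookkeeping consequence of the explicit, polynomial form of the jump rates. The one point requiring care is the identification of the symmetric/antisymmetric split obtained from the naive $b$-expansion with the genuine $\mathbb L^2(\nu_\rho)$-orthogonal decomposition $\mathcal L_n^m=\mathcal S_n^m+\mathcal A_n^m$: one must check that the "$b$-part" of the generator is indeed antisymmetric and the "$b$-free part" symmetric in $\mathbb L^2(\nu_\rho)$, which follows from the detailed-balance relation $c_{x,y}(\eta)=c_{x,y}(\eta^{x,y})$ together with the fact that $\nu_\rho$ is exchangeable. A secondary bit of care is ensuring the degree count for general $m$ is tight — that the top degree is exactly $m+1$ and not larger — which is where the factorization $(\eta(x)-\eta(x+1))^2$ of the asymmetric weight is used.
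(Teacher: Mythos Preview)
Your approach is essentially identical to the paper's: compute the current explicitly from the generator, exhibit the gradient form of the symmetric part via an explicit local $h^m$, and expand the antisymmetric part $\tfrac{b}{2n^\gamma}c^m_{x,x+1}(\eta)\big(\eta(x)+\eta(x+1)-2\eta(x)\eta(x+1)\big)$ in centered variables $\eta=\bar\eta+\rho$ to read off the homogeneous pieces $\mathbf P_k$. Two small corrections: the sign on $j^{n,\mathbf a}$ should be $+\tfrac{b}{2n^\gamma}$ rather than $-$, and your trial function $h(\eta)=\tfrac12\big(\eta(-1)\eta(0)+\eta(0)\eta(1)+\eta(1)\eta(2)\big)$ does \emph{not} telescope to the symmetric current --- the paper takes $h(\eta)=\eta(0)\eta(1)+\eta(0)\eta(-1)-\eta(-1)\eta(1)$ for $m=2$ and writes down an explicit general-$m$ formula \eqref{eq:h} rather than deferring to \cite{GLT}.
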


\begin{proof}
  A simple computation shows that the symmetric part of the recentered current $\bar j_{x,x+1}^{n,m,{\bf s}}$ can be written as the gradient of the local function $\tau_xh^m(\eta)$, where
\begin{equation}
h^m(\eta)= \sum_{k=1}^{m} \prod_{j=-(m-k)}^{k-1} \eta(j) - \sum_{k=1}^{m-1} \prod_{\substack{j=-(m-k)\\j\neq 0}}^{k} \eta(j).
\label{eq:h}\end{equation}  
In the case $m=2$, the last formula reduces to
\begin{equation}
h(\eta)=\eta(0)\eta(1)+\eta(0)\eta(-1)-\eta(-1)\eta(1).
\end{equation}
It is also quite easy to see that the antisymmetric part of the current can be {chosen} as
\begin{equation}\label{eq:asymetric}
j_{x,x+1}^{n,m,{\bf a}}(\eta)= \frac{b}{2n^\gamma}\big(\eta(x)+\eta(x+1)-2\eta(x)\eta(x+1)\big)  c_{x,x+1}^m(\eta),
\end{equation} 
where $c_{x,x+1}^m(\eta)$ has been defined in \eqref{eq:rate}. 
Let us now recenter the antisymmetric current by substracting its average. For the sake of simplicity, we write down explicitly the case $m=2$ only, and we let the reader check the general case.
The omitted computations are based on a single formula: for every $(x_1,...,x_k)\in\bb Z^k$, we have
\begin{align}
\eta(x_1)\cdots\eta(x_k) =& \; \bar\eta(x_1)\cdots\bar\eta(x_k) + \rho \sum_{j=1}^k \; \prod_{\substack{ \ell=1 \\ \ell \neq j  }}^k \bar\eta(x_\ell)  + \rho^2 \sum_{\substack{\{i,j\}\\ 1\leq i,j \leq k  }} \; \prod_{\substack{\ell=1 \\ \ell \notin\{i,j\}  }}^k \bar\eta(x_\ell)  \notag \\
& + \cdots + \rho^{k-1} \sum_{j=1}^k \bar\eta(x_j) + \rho^k. \label{eq:centering}
\end{align}
Note that  \eqref{eq:centering} is a sum of homogeneous polynomials  in the variables $\{\bar\eta(x_1),...,\bar\eta(x_k)\}$. The particular case $m=2$ gives
\begin{align}
  \bar j^{n,\bf a}_{x,x+1}(\eta)=& -\frac{b}{2n^\gamma} \bigg\{ 2\bar{\eta}(x-1)\bar{\eta}(x)\bar{\eta}(x+1)+2\bar{\eta}(x)\bar{\eta}(x+1)
\bar{\eta}(x+2)\notag\\
+&4\rho \bar\eta(x)\bar\eta(x+1) +(2\rho-1)(\bar\eta(x-1)+\bar\eta(x+2))(\bar\eta(x+1)+\bar\eta(x))\notag \\
+ & (4\rho^2-2\rho)\big(\bar\eta(x)+\bar\eta(x+1)\big)+
(2\rho^2-2\rho)\big(\bar\eta(x-1)+\bar\eta(x+2)\big)\bigg\}.
\label{intp4}
\end{align}
Then, the proposition follows with 
\begin{align*}
\mathbf{P}_1(\eta)&=-b (2\rho^2-\rho)\big(\bar\eta(0)+\bar\eta(1)\big)-b
(\rho^2-\rho)\big(\bar\eta(-1)+\bar\eta(2)\big) \\
\mathbf{P}_2(\eta)&=- b \; \big\{ 2\rho\bar\eta(0)\bar\eta(1)+(\rho-\tfrac12)\big(\bar\eta(-1)\bar\eta(1)+\bar\eta(0)\bar\eta(2) \\ & \qquad \qquad +\bar\eta(-1)\bar\eta(0)+\bar\eta(1)\bar\eta(2)\big)\big\}\\
\mathbf{P}_3(\eta)&= -b \bar\eta(-1)\bar\eta(0)\bar\eta(1)-b\bar\eta(0)\bar\eta(1)\bar\eta(2).
\end{align*}
\end{proof}

\section{Main results}\label{sec:main_results}
\subsection{Definitions}

From now on, to simplify the notation, we denote by $\eta_{tn^2}$ the random variable $\eta^n_{tn^2}$. 
Our process of interest is the \emph{density fluctuation field}, defined on functions $H$ that belong to the usual Schwartz space $\mathcal{S}(\mathbb{R})$, as
\[\mathcal{Y}^n_t(H):=\frac{1}{\sqrt n}\sum_{x\in\mathbb{Z}}H\Big(\frac{x}{n}\Big)\big(\eta_{tn^2}(x)-\rho\big).\]
 Note that the {expectation of the instantaneous current, in the diffusive time scale, with respect to the equilibrium measure $\nu_\rho$}, equals {(up to an additive constant not depending on $\rho$)} \[\nu_\rho\big(n^2 j_{x,x+1}^{n,m}\big)=bn^{2-\gamma} D(\rho) \chi(\rho).\]  Let us define 
\[
F(\rho):=bD(\rho)\chi(\rho)=bm\rho^m(1-\rho) \quad \text{ and } \quad \mathbf{v}_n:=n^{2-\gamma} \; F'(\rho), \] which describe the \emph{transport behavior of the fluctuations}. {Notice here that the velocity ${\bf v}_n=n^{2-\gamma}\; F'(\rho)$ and the following discussion do not depend on the choice of additive constant mentioned in Remark~\ref{rem}.}  

The fluctuations travel at velocity $\mathbf{v}_n$. Therefore, in order to see a non-trivial evolution of the density fluctuations in the time scale $n^2$, we need to recenter the density fluctuation field by removing $\mathbf{v}_n$ as follows:
\[\widetilde{\mc Y}_t^n(H)= \frac{1}{\sqrt n} \sum_{x\in\bb Z} H\Big(\frac{x-\mathbf{v}_n\; t}{n}\Big) \big(\eta_{tn^2}(x)-\rho\big). \] 
In the following we still denote the field by $\mc Y_t^n$ for simplicity of notation. Actually, a simple computation shows that the choice $\rho=\frac{m}{m+1}$ implies that $\mathbf{v}_n$ vanishes   (see also Section \ref{sec:strategy} for more details on this choice for $\rho$). To lighten further computations we will often assume $\rho=\frac{m}{m+1}$, but we stress that the results hold for any value of $\rho$.

 Now, we need to explain in which sense we obtain an energy solution of the \emph{stochastic Burgers equation}, which is the  macroscopic SPDE satisfied by the limiting density fluctuation field for $\gamma=1/2$. 
 
 \subsection{Stationary energy solutions of the stochastic Burgers equation}
We recall from \cite{GS} the notion of stationary energy solutions of  the SBE given by 
\begin{equation}
\label{eq:sbegene}
d\mc Y_t=\tfrac{1}{2}D(\rho)\Delta \mc Y_t \; dt +\tfrac{1}{2} F''(\rho) \nabla(\mc Y_t^2)\; dt + \sqrt{D(\rho)\chi(\rho)} \; \nabla d\mc W_t.
\end{equation}
Above, $\{\mc W_t \; ; \; t\in [0,T]\}$ is a $\mc S'(\bb R)$-valued Brownian motion, where  $\mathcal{S}'(\bb R)$ is the topological dual of $\mathcal{S}(\bb R)$ with respect to $\bb L^2(\bb R)$.  For a function $H\in\mathcal{S}(\bb R)$, we denote by $\| H\|_2^2$ the usual $\bb L^2(\bb R)$-norm of $H$: $\|H\|_2^2:=\int_\bb R (H(x))^2\, dx.$ Let $\mathcal{C}([0,T],\mathcal{S}'(\bb R))$ be the space of $\mathcal{S}'(\bb R)-$valued continuous trajectories endowed with the weak topology.

\begin{definition}\label{def:sbe}
A process $\{\mathcal{Y}_t\,;\, t\in[0,T]\}$ in $\mathcal{C}([0,T],\mathcal{S}'({\mathbb{R})})$ is a stationary energy solution of \eqref{eq:sbegene} if
\begin{enumerate}
\item for each $t\in [0,T]$, $\mathcal{Y}_t$ is a $\mc S'(\bb R)$-valued white noise with variance $\chi(\rho)$;
\item there exists a positive constant $\kappa$ such that for any $H\in\mathcal{S}(\mathbb{R})$  and $0<\delta<\epsilon<1$
\begin{equation}\label{eq:energy_estimate}
\mathbb{E}\Big[\big(\mathcal{B}_{s,t}^\epsilon(H)-\mathcal{B}_{s,t}^\delta(H)\big)^2\Big]\leq \kappa \epsilon  (t-s)\|\nabla H\|_2^2, \end{equation}
where $$\mathcal{B}_{s,t}^\epsilon(H):=\int_s^t\int_\mathbb{R}\big(\mathcal{Y}_r\ast\iota_\epsilon(x)\big)^2\;\nabla H(x)\,dx\,dr$$
and $\iota_\epsilon(x)$ is an approximation of the identity; 
\item for $H\in\mathcal{S}(\mathbb{R})$, \[\mathcal{Y}_t(H)-\mathcal{Y}_0(H)-\tfrac12 \int_{0}^t\mathcal{Y}_s(D(\rho)\Delta H)\,ds+\tfrac 1 2  F''(\rho)\mathcal B_t(H)\]
is a Brownian motion of variance $D(\rho)\chi(\rho)\|H\|_2^2$,  where $\mathcal{B}_t(H):=\lim_{\epsilon\to0}\mathcal{B}_{0,t}^\epsilon(H)$, the limit being in $\mathbb{L}^2$;
\item the reversed process $\{\mathcal{Y}_{T-t}\,;\, t\in[0,T]\}$ satisfies (3).
\end{enumerate} 
\end{definition}
Above $*$ denotes the convolution operator.

\begin{remark}We note that above we can take, for each $x\in\bb R$ a function  $\iota_\varepsilon(x)(\cdot)=\varepsilon^{-1}\iota(x)(\varepsilon^{-1} \cdot)$ where   $\iota(x)$ is a function in $\bb L^1(\bb R)\cap \bb L^2(\bb R),$ with $\int_{\bb R}\iota(x)(u)du=1$. From, for example, Proposition 2.2 of \cite{GubPer} we know that the process $\mc B_{0,t}^{\varepsilon}$ converges, as $\varepsilon\to 0$ and the limit $\mc B_{0,t}$ does not depend on the choice of  $\iota(x)$.\end{remark}

\subsection{Statement of the convergence result}

From the result in \cite{GubPer}, we have the uniqueness of the stationary controlled solutions of the SBE on the whole line and since the notion of solutions that we use above in Definition \ref{def:sbe} is
equivalent to the one used there (see \cite{GS}, in particular
Proposition 3), we obtain the results stated in the theorem below, which are similar to those of \cite{FGS, gj2014, GJS}. From this we  give a step towards showing the universality of the SBE equation from general microscopic dynamics since our models allow degenerate rates.

 \begin{theorem} \label{theo:conv} Fix $T>0$. The sequence of processes $\{\mc Y_t^n\; ; \; t \in [0,T]\}_{n \in \bb N}$ converges in distribution, as $n\to\infty$, with respect to the Skorokhod topology of $\mc D([0,T]\; ; \; \mc S'(\bb R))$, and moreover  
 \begin{enumerate} 
\item if  $\gamma > \frac12$, it converges to the infinite dimensional Ornstein-Uhlenbeck process solution to
\begin{equation} \label{eq:OU}
d \mc Y_t = \tfrac{1}{2}D(\rho) \Delta \mc Y_t \; dt + \sqrt{D(\rho)\chi(\rho)}\; \nabla d\mc W_t\; ;
\end{equation}

\item if  $\gamma = \frac12$, it converges to the stationary energy solution of the stochastic Burgers equation 
\begin{equation} \label{eq:sbe}
d\mc Y_t=\tfrac{1}{2}D(\rho)\Delta \mc Y_t \; dt +\tfrac{1}{2} F''(\rho) \nabla(\mc Y_t^2)\; dt + \sqrt{D(\rho)\chi(\rho)} \; \nabla d\mc W_t.
\end{equation}
\end{enumerate} 
Above, $\{\mc W_t \; ; \; t\in [0,T]\}$ is a $\mc S'(\bb R)$-valued Brownian motion. 
 \end{theorem}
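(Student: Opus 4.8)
The plan is to follow the now-standard martingale/tightness route for deriving the stochastic Burgers (or Ornstein--Uhlenbeck) equation, with the second order Boltzmann--Gibbs principle (BGP2) of Section \ref{sec:BG} as the crucial analytic input that replaces the truly nonlinear microscopic term by a function of the density field. First I would write Dynkin's formula for the (recentered) fluctuation field: for $H\in\mc S(\bb R)$,
\[
\mc Y_t^n(H)=\mc Y_0^n(H)+\int_0^t \big(n^2\mc L_n^m+\partial_s\big)\mc Y_s^n(H)\,ds + \mc M_t^n(H),
\]
where $\mc M_t^n(H)$ is a martingale with explicit quadratic variation. Using the conservation law, $n^2\mc L_n^m\eta(x)=n^2\big(j^{n,m}_{x-1,x}-j^{n,m}_{x,x+1}\big)$, a discrete summation by parts turns the drift into a sum of three pieces: a symmetric part, which by the gradient condition \eqref{grad_cond_sym} becomes (after a second summation by parts) $\tfrac12\int_0^t \mc Y_s^n(D(\rho)\Delta_n H)\,ds$ plus vanishing errors; an antisymmetric part, which by \eqref{decomp_asy} and the centering identity \eqref{eq:centering} splits according to the degree $k$ of the polynomials $\mb P_k$; and the term coming from $\partial_s$ together with the recentering by $\mb v_n$, which is designed precisely to cancel the degree-one (linear) contribution of the antisymmetric current, leaving the transport term to vanish. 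The degree-$\geq 3$ terms carry a prefactor $n^{-\gamma}n^{3/2-1}=n^{1/2-\gamma}$ after the field normalization and are controlled to be negligible by an elementary $\bb L^2(\nu_\rho)$ estimate (here one may, if needed, pay a harmless logarithmic or polynomial cost because the relevant sums are over triple products of centered occupation variables). The degree-two term is the genuinely nonlinear one: $n^{-\gamma}n^{-1/2}\sum_x \nabla_n H(x/n)\,\tau_x\mb P_2(\eta)$, and this is exactly the object to which the BGP2 applies.

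The heart of the argument is therefore the application of the BGP2 proved in Section \ref{sec:BG}: it allows one to replace the local quadratic function $\tau_x\mb P_2(\bar\eta)$ by $\tfrac{c}{\ell}\sum_{|y|\leq \ell}\bar\eta(x+y)\cdot\tfrac{c'}{\ell}\sum_{|y|\leq \ell}\bar\eta(x+y)$ minus its average (with the correct combinatorial constant matching $F''(\rho)$), up to an error that is $o(1)$ in $\bb L^2(\P^n_\rho)$ once $\ell=\epsilon n$ and $\epsilon\to 0$ after $n\to\infty$. Summing against $\nabla_n H$ and recognizing the empirical average $\tfrac1\ell\sum_{|y|\leq\ell}\bar\eta(x+y)$ as (essentially) $\mc Y_s^n$ convolved with an approximate identity $\iota_\epsilon$, one obtains that the degree-two term converges to $-\tfrac12 F''(\rho)\,\mc B_{0,t}^\epsilon(H)$ in the notation of Definition \ref{def:sbe}, and the very same $\bb L^2$ estimate from the BGP2 gives the energy estimate \eqref{eq:energy_estimate} for any limit point. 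For $\gamma>\tfrac12$ this whole nonlinear term carries an extra vanishing factor $n^{1/2-\gamma}\to0$, so the drift reduces to the Laplacian term alone and we are in case (1). The martingale term is handled by computing $\langle \mc M^n(H)\rangle_t$ explicitly: it equals $\int_0^t n^2\{n^2\mc L_n^m (\mc Y_s^n(H))^2 - 2\mc Y_s^n(H)\,n^2\mc L_n^m\mc Y_s^n(H)\}\,ds$, which after the standard computation is an integral of a local additive functional; by a (first order) Boltzmann--Gibbs / law-of-large-numbers replacement it converges to the deterministic limit $t\,D(\rho)\chi(\rho)\|\nabla H\|_2^2$ (the asymmetry contributes only lower-order terms), so the limiting martingale is Gaussian with the variance stated in Definition \ref{def:sbe}(3).

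To make this rigorous I would (i) prove tightness of $\{\mc Y_t^n\}_n$ in $\mc D([0,T];\mc S'(\bb R))$, using Mitoma's criterion reducing to tightness of the real-valued processes $\{\mc Y_t^n(H)\}_n$ for each fixed $H$; tightness of the martingale part follows from its quadratic-variation bound and Aldous' criterion, tightness of the drift from the $\bb L^2$ bounds above plus a Kolmogorov-type estimate, and stationarity of $\mc Y_0^n$ (a white noise with variance $\chi(\rho)$ by the CLT for $\nu_\rho$, since almost surely there are no blocked configurations) gives (1) of Definition \ref{def:sbe}; (ii) identify any limit point as satisfying conditions (1)--(4) of Definition \ref{def:sbe} — condition (4) because $\nu_\rho$ is invariant and reversibility of the symmetric part makes the time-reversed process a dynamics of the same type with $b\mapsto -b$, hence the same analysis applies; (iii) invoke the uniqueness of stationary energy solutions (equivalently, stationary controlled solutions) from \cite{GubPer,GS} to conclude convergence of the full sequence rather than along subsequences. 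For case (1) one similarly identifies the limit as the unique (Gaussian) solution of the Ornstein--Uhlenbeck equation \eqref{eq:OU}, whose well-posedness is classical. The main obstacle is unquestionably step (ii)'s nonlinear replacement: establishing the BGP2 in our setting, where the jump rates degenerate and blocked configurations exist, so that no spectral gap bound is available. The replacement must instead be driven by the combination of the exponential smallness \eqref{eq:decaysigmam} of bad boxes under $\nu_\rho$ together with the mobile-cluster mechanism of Section \ref{sec:blocked}, which supplies an ``allowed path'' realizing any exchange $\eta\rightsquigarrow\eta^{x,y}$ using each bond a bounded number of times — this is precisely what Section \ref{sec:BG} is devoted to, and everything else in the proof of Theorem \ref{theo:conv} is a (by now) routine assembly around it.
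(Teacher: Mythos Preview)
Your strategy matches the paper's proof essentially step by step: Dynkin's formula, the gradient condition for the symmetric current, the polynomial decomposition of the antisymmetric current, the BGP2 for the degree-two piece, tightness via Mitoma, and uniqueness from \cite{GubPer,GS}. The structure is correct and you correctly locate the main obstacle in the BGP2.

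One point deserves correction. You write that the degree-$\geq 3$ terms ``are controlled to be negligible by an elementary $\bb L^2(\nu_\rho)$ estimate''. This would fail: for $\gamma=\tfrac12$ the prefactor is $O(1)$, and a static Cauchy--Schwarz on $\int_0^t\sum_x V(x)\tau_x\psi(\eta_{sn^2})\,ds$ only yields a bound of order $t^2 n\|V\|_{2,n}^2\mathrm{Var}(\psi)$, which diverges. The same issue affects the ``vanishing errors'' you mention for the symmetric part: after extracting the linear contribution $\tfrac{d}{d\rho}\nu_\rho(h^m)\,\bar\eta(x)$ from $\tau_x h^m$, the remainder is a sum of centered polynomials of degree $\geq 2$ paired with $\Delta_n H$ and a prefactor $1/\sqrt n$, and again a purely static bound gives $O(1)$, not $o(1)$. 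In both cases the paper uses the \emph{dynamics}: Lemmas \ref{lemma:1stBG} and \ref{lemma:1stBGdeg_three} perform one-block replacements (via Lemma \ref{lem:restriction} and Proposition \ref{prop:one-block}, i.e.\ the same path/mobile-cluster argument underlying the BGP2) to replace local products by box averages, after which a static estimate does vanish. Since you already have this machinery available from Section \ref{sec:BG}, this is a repairable imprecision rather than a structural gap, but the phrase ``elementary $\bb L^2$ estimate'' should be replaced by ``the same one-block/path argument as in the BGP2''.
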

\begin{remark}
One can compute $ F''(\rho)=bm^2\rho^{m-2} (m-1-(m+1)\rho),$ hence for our simplified choice $m=2$ and $\rho=\frac{2}{3}$ the stochastic Burgers equation \eqref{eq:sbe} becomes 
\begin{equation} \label{eq:sbe-simplified}
d\mc Y_t=\rho \Delta \mc Y_t \; dt -2b \nabla(\mc Y_t^2)\; dt + \sqrt{2\rho\chi(\rho)} \; \nabla d\mc W_t.
\end{equation}
\end{remark}

Thanks to the robust method introduced for the first time in \cite{gj2014}, and used many times since (for example in  \cite{FGS, GJS, GS}),  Theorem \ref{theo:conv} may actually be viewed as a consequence of the  uniqueness result of \cite{GubPer}, together with the BGP2, which is the main result of this paper and is presented in the next section.  Since the general argument to obtain the convergence is now becoming quite standard, we do not want to focus on it. However, for the sake of completeness we expose below the main steps of the proof, and we refer to \cite{FGS, gj2014, GJS, GS} for more details.

\section{Strategy of the proof} \label{sec:strategy} 

In order to simplify the exposition we assume  here $m=2$ and $\rho=\frac{m}{m+1}=\frac{2}{3}$.
The usual structure of the proof is as follows:
\begin{enumerate}
\item first, prove that the sequence of probability measures $\{\mc Q_n\}_{n\in\bb N}$ induced by the energy fluctuation field $\mc Y_t^n$ (which are therefore measures on the Skorokhod space $\mc D([0,T],\mc S'(\bb R))$ is tight;
\item second, write down the approximate martingale problem satisfied by $\mc Y_t^n$ in the large $n$ limit, and prove that it coincides with the martingale characterization of the solutions to the SPDE's given in Theorem \ref{theo:conv}.
\end{enumerate}
The first step follows standard arguments and we refer the interested reader to \cite{GJS}, where this is proved for a perturbation of the models that we consider here. We now focus on the second step, namely the martingale problem obtained in the limit $n\to \infty$. 

 In the case $\gamma = \frac12$, additionally we need to prove that all the conditions stated  in Definition \ref{def:sbe} are satisfied. The martingale formulation given in item (3) is derived below from Dynkin's formula, together with the second order Boltzmann-Gibbs principle and its consequences, as we now describe.  At the end of the section, we merely sketch the derivation of items (1) and (2) since the argument is standard, but for more details we refer the interested reader to \cite{gj2014,GJS,GS}.  Finally, concerning the last item (4), it is enough to consider the reversed dynamics with infinitesimal generator given by $(\mathcal L^m_n)^*$, and repeat the computations that we do in the proof of (3) for the original dynamics.
 
\bigskip
 
\paragraph{\sc Proof of (3):} By Dynkin's formula, for $H\in\mathcal{S}(\mathbb{R})$, 
\begin{equation}
\mc M_t^n(H):=\mathcal{Y}^n_t(H)-\mathcal{Y}^n_0(H)-\int_{0}^{t}n^2\mathcal{L}_n(\mathcal{Y}^n_s(H)) \; ds \label{eq:martingale}
\end{equation}
is a martingale.  Let us define the discrete approximations of the gradient and the Laplacian of $H$ as follows: for any $x\in \bb Z$, 
\begin{align*}
\nabla_nH\Big(\frac{x}{n}\Big):=n\Big[H\Big(\frac{x+1}{n}\Big)-H\Big(\frac{x}{n}\Big)\Big],\quad \Delta_nH \Big(\frac{x}{n}\Big):=n\Big[\nabla_nH\Big(\frac{x}{n}\Big)-\nabla_nH\Big(\frac{x-1}{n}\Big)\Big].
\end{align*}
 A simple computation shows that the integral part of $\mc M_t^n(H)$ can be written as
\begin{equation}\label{eq:integral}
\mathcal{I}_t^n(H):=\int_{0}^{t}\frac{n}{\sqrt n}\sum_{x\in\mathbb{Z}}\nabla_nH\Big(\frac{x}{n}\Big)\; j^{n}_{x,x+1}(\eta_{sn^2})\; ds.
\end{equation}
As a consequence of Proposition \ref{prop:current}, and recalling the explicit computation \eqref{intp4}, one can check that $\mc I_t^n(H)$ defined in \eqref{eq:integral} is the sum of several terms: first, the $h$-\emph{term}
  \begin{equation}
\int_{0}^{t}\frac{1}{2\sqrt n}\sum_{x\in\mathbb{Z}}\Delta_nH\Big(\frac{x}{n}\Big)\tau_xh({\eta}_{sn^2})\;ds,\label{eq:t1}\end{equation}
 plus {a sum of \emph{degree-one-terms}}  {(which correspond to the ${\bf P}_1$-term in Proposition \ref{prop:current})} of the form
 \begin{equation} 
 C(b,\rho) \; \int_{0}^{t}\frac{n^{1-\gamma}}{\sqrt n}\sum_{x\in\mathbb{Z}}\nabla_nH\Big(\frac{x}{n}\Big)\bar{\eta}_{sn^2}{(x+y)}\;ds, \label{eq:t11}
 \end{equation}
 plus \emph{degree-two-terms}  {(which correspond to the ${\bf P}_2$-term)}  of the form 
 \begin{equation}
C(b,\rho)\; \int_{0}^{t}\frac{n^{1-\gamma}}{\sqrt n}\sum_{x\in\mathbb{Z}}\nabla_nH\Big(\frac{x}{n}\Big)\bar{\eta}_{sn^2}({x+y})\bar{\eta}_{sn^2}{(x+z)}\;ds, \label{eq:t2}
\end{equation}
and finally \emph{degree-three-terms} {(which correspond to the ${\bf P}_3$-term)} of the form
 \begin{equation}
C(b,\rho)\; \int_{0}^{t}\frac{n^{1-\gamma}}{\sqrt n}\sum_{x\in\mathbb{Z}}\nabla_nH\Big(\frac{x}{n}\Big)\bar{\eta}_{sn^2}(x)\bar{\eta}_{sn^2}{(x+y)}\bar{\eta}_{sn^2}{(x+z)}\;ds. \label{eq:t3}
\end{equation}
 Above, $y,z$ are fixed integers and $C(b,\rho)$ is a constant that does not depend on $n$. 

Terms \eqref{eq:t1}, \eqref{eq:t11} and \eqref{eq:t3} are not very challenging, and their behavior as $n\to \infty$ is easy to understand. The term \eqref{eq:t2} is more tricky and is investigated by the BGP2, which is proved in Section \ref{sec:BG}.
We end this paragraph by describing the large $n$ limit behavior of all these terms. 

\bigskip

\paragraph{\textbf{a) Limit of the $h$-term}} Note at first that one can replace $\tau_x h$ by $\tau_x h-\nu_\rho(h)$ since the sum $\sum_x \Delta_n H (\frac x n) = 0$. Note then that
 \[
 \tau_xh(\eta)-\nu_\rho(h)-\frac{d}{d\rho}[\nu_\rho(h)]\; \bar\eta(x) =\bar{\eta}(x)\bar{\eta}(x+1)+\bar{\eta}(x)\bar{\eta}(x-1)-\bar{\eta}(x-1)\bar{\eta}(x+1).
 \]
The following result, which is proved in Section \ref{sec:corollary}, Lemma \ref{lemma:1stBG} will help us to conclude:
we have, for any $y\in\mathbb{Z}$, $y\neq 0$ 
\[
\lim_{n\to\infty}\mathbb{E}^n_{{\rho}}\bigg[\Big(\int_{0}^t \frac{1}{\sqrt n}\sum_{x\in\mathbb{Z}}\nabla_n H\Big(\frac x n\Big)\bar{\eta}_{sn^{2}}(x)\bar{\eta}_{sn^{2}}(x+y)ds\Big)^2\bigg]
=0.\]
It follows  that \eqref{eq:t1} can be rewritten as  
 \begin{equation*}
\int_{0}^{t}\frac{1}{2\sqrt n}\sum_{x\in\mathbb{Z}}\Big\{\Delta_nH\Big(\frac{x}{n}\Big)\frac{d}{d\rho}[\nu_\rho(h)]\bar{\eta}_{sn^2}(x)\Big\}ds = \rho \int_0^t \mc Y_s^n(\Delta_n H) \; ds,
\end{equation*}
plus a term which vanishes in $\bb L^2(\bb P_\rho^n)$ as $n\to\infty$. 

\begin{remark}
For $m\geq 3$, $\tau_xh^m(\eta)-\nu_\rho(h^m)-\frac{d}{d\rho}[\nu_\rho(h^m)]\; \bar\eta(x)$ is a sum of centered polynomials of degree $2$ or more. In that case we use Lemmas~\ref{lemma:1stBG},~\ref{lemma:1stBGdeg_three} and Remark~\ref{rem:lemma6.2_higher} to conclude.
\end{remark}

\bigskip

\paragraph{\textbf{b) Limit of the degree-one-terms}} {The sum of all terms of the form \eqref{eq:t11}  will not contribute to the limit}, since we are looking in a reference frame which cancels out any transport behavior (which is equivalent to assuming $\rho=\frac{m}{m+1}$). Only in this paragraph, we can consider any $m\ge 2$ without losing the reader into tedious computations.  Indeed, let us prove the following lemma: 
\begin{lemma} Let $m\geq 2$ and recall the expression for the antisymmetric part of the current given in \eqref{eq:asymetric}. 
If $\rho=\frac{m}{m+1}$, then the degree one polynomial $\mathbf{P}_1$ can be written as the gradient of some {local} function $g^m:\mc E\to \bb R$ which has mean zero with respect to $\nu_\rho$: that is, for every $x\in \bb Z$,
\[\tau_x\mathbf{P}_1(\eta)=\tau_x g^m(\eta)-\tau_{x+1} g^m(\eta).\]
\end{lemma}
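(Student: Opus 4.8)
The plan is to exhibit the local function $g^m$ explicitly, in analogy with the computation already done for $\mathbf{P}_1$ in the proof of Proposition~\ref{prop:current}. First I would recall that $\mathbf{P}_1$ is, up to the constant $b$, the degree-one part of the recentered antisymmetric current $\bar j^{n,m,\mathbf a}_{x,x+1}$, which from \eqref{eq:asymetric} equals
\[
\frac{b}{2n^\gamma}\big(\eta(x)+\eta(x+1)-2\eta(x)\eta(x+1)\big)c^m_{x,x+1}(\eta).
\]
Expanding this in recentered variables via \eqref{eq:centering} and collecting the terms that are linear in the $\bar\eta(x+j)$'s, one obtains $n^{-\gamma}\tau_x\mathbf{P}_1$; for $m=2$ this is the line $(4\rho^2-2\rho)(\bar\eta(x)+\bar\eta(x+1))+(2\rho^2-2\rho)(\bar\eta(x-1)+\bar\eta(x+2))$ appearing in \eqref{intp4}, which gives $\mathbf{P}_1(\eta)=-b(2\rho^2-\rho)(\bar\eta(0)+\bar\eta(1))-b(\rho^2-\rho)(\bar\eta(-1)+\bar\eta(2))$.

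The key observation is that a translation-invariant linear combination $\sum_j \alpha_j \bar\eta(x+j)$ is a discrete gradient $\tau_x g - \tau_{x+1} g$ of a (finite) linear combination $g=\sum_k \beta_k \bar\eta(k)$ of occupation variables precisely when $\sum_j \alpha_j = 0$, in which case the coefficients $\beta_k$ are obtained by summing the $\alpha_j$'s (a telescoping / discrete antiderivative). So the heart of the matter is to check that the sum of the coefficients of $\mathbf{P}_1$ vanishes when $\rho=\frac{m}{m+1}$. For $m=2$ the sum of coefficients is $2\big(-b(2\rho^2-\rho)\big)+2\big(-b(\rho^2-\rho)\big)=-2b(3\rho^2-2\rho)=-2b\rho(3\rho-2)$, which is exactly zero at $\rho=\frac23=\frac{m}{m+1}$. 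For general $m$, the same cancellation is forced by the fact that $\nu_\rho$-expectation of the current is $bn^{2-\gamma}D(\rho)\chi(\rho)$ with $F(\rho)=bm\rho^m(1-\rho)$, and the relevant linear coefficient sum is proportional to $F'(\rho)=bm\rho^{m-1}(m-(m+1)\rho)$, which vanishes at $\rho=\frac{m}{m+1}$; this is precisely the choice of $\rho$ that makes $\mathbf v_n=n^{2-\gamma}F'(\rho)$ vanish. I would then write $g^m$ as the explicit discrete antiderivative: for $m=2$, $g(\eta) = -b(2\rho^2-\rho)\bar\eta(0) - b(3\rho^2-2\rho)\bar\eta(-1)$ (or the analogous shift), and check directly that $\tau_x g - \tau_{x+1} g = \tau_x\mathbf{P}_1$ and that $\nu_\rho(g)=0$, the latter being immediate since $g$ is a linear combination of recentered variables $\bar\eta(k)$, each of which has mean zero.

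I do not expect a genuine obstacle here: the only thing that could go wrong is the vanishing of the coefficient sum, and that is guaranteed by the hypothesis $\rho=\frac{m}{m+1}$ together with the structure of $F$. The mildly delicate point is bookkeeping for general $m$ — tracking which monomials in $c^m_{x,x+1}(\eta)(\eta(x)+\eta(x+1)-2\eta(x)\eta(x+1))$ contribute linear terms after centering, and verifying that their translates telescope — but this is routine polynomial algebra of the same type already carried out in the proof of Proposition~\ref{prop:current}, and the paper explicitly invites the reader to check the general case. So the proof is: write out $\mathbf{P}_1$, observe that the sum of its coefficients equals (a constant times) $F'(\rho)$, invoke $\rho=\frac{m}{m+1}\Rightarrow F'(\rho)=0$, and define $g^m$ by summation of coefficients, noting its mean-zero property is automatic.
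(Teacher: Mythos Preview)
Your approach is correct and is in fact more conceptual than the paper's. The paper proceeds by brute force: it writes out $\tau_x\mathbf{P}_1$ explicitly for general $m$ (this is formula \eqref{eq:deg1}), substitutes $\rho=\frac{m}{m+1}$, and then displays the resulting expression as an explicit telescoping sum. Your argument instead isolates the structural reason the gradient exists: the sum of the linear coefficients of $\mathbf{P}_1$ equals $\frac{d}{d\rho}\nu_\rho\big(n^\gamma j^{n,m,\mathbf a}_{0,1}\big)=F'(\rho)$ (a direct consequence of \eqref{eq:centering}, as you note), and this is exactly the quantity the paper already set to zero when choosing $\rho=\frac{m}{m+1}$ to kill the transport velocity $\mathbf v_n$. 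This neatly explains \emph{why} the special density works without any $m$-dependent bookkeeping, whereas the paper's computation delivers an explicit closed form for $g^m$ at the cost of that bookkeeping.

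One small slip: your proposed antiderivative $g(\eta)=-b(2\rho^2-\rho)\bar\eta(0)-b(3\rho^2-2\rho)\bar\eta(-1)$ for $m=2$ is not correct; at $\rho=\tfrac23$ it collapses to $-(2b/9)\bar\eta(0)$, whose discrete gradient does not reproduce $\mathbf{P}_1$. Carrying out the partial-sum construction you describe (with $\beta_k=\sum_{j\le k}\alpha_j$) actually gives, at $\rho=\tfrac23$, $g(\eta)=(2b/9)\big(\bar\eta(-1)-\bar\eta(1)\big)$. This is only a bookkeeping error and does not affect the validity of your argument, since the existence of $g^m$ follows from the vanishing coefficient sum, and $\nu_\rho(g^m)=0$ is automatic because $g^m$ is a linear combination of centered variables.
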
 
\begin{proof}
Using the formula \eqref{eq:centering}, it is not difficult to figure out and compute the degree one homogeneous polynomial $\mathbf{P}_1$. We let the reader check that 
\begin{multline}
\tau_x\mathbf{P}_1(\eta)={\frac{b}{2}} \bigg\{   m\rho^{m-1}(1-2\rho) \big(\bar\eta(x)+\bar\eta(x+1)\big) \\ + \sum_{k=1}^{m-1} 2(m-k)\rho^{m-1}(1-\rho) \big(\bar\eta(x-k)+\bar\eta(x+1+k)\big)  \bigg\}. \label{eq:deg1} 
\end{multline} Therefore, one can see that, for the special value $\rho=\frac{m}{m+1}$, the degree one part \eqref{eq:deg1} is the gradient of a mean zero function (with respect to $\nu_\rho$), more precisely $\tau_x\mathbf{P}_1$ is equal to:
\[
-{\frac{b}{m}}\; \Big(\frac{m}{m+1}\Big)^{m} \;  \sum_{k=1}^{m-1} k \big(\bar\eta(x)-\bar\eta(x-m+k) + \bar\eta(x+1)-\bar\eta(x+m-k+1)\big). 
\]

\end{proof}
Therefore, the contribution of all degree-one-terms, which is exactly given by 
\[
\int_0^t \frac{n^{1-\gamma}}{\sqrt n} \sum_{x\in\bb Z} \nabla_n H\Big(\frac{x}{n}\Big) \tau_x \mathbf{P}_1(\eta_{sn^2}) \; ds
\]
can be rewritten, after an integration by parts, as
\[
\int_0^t \frac{n^{-\gamma}}{\sqrt n}\sum_{x\in\bb Z} \Delta_nH\Big(\frac{x}{n}\Big) \tau_x g(\eta_{sn^2})\; ds,
\]
which vanishes in $\bb L^2(\bb P_\rho^n)$, as $n\to\infty$, for any $\gamma >0$, from the Cauchy-Schwarz inequality. 

\bigskip

\paragraph{\textbf{c) Limit of degree-two-terms}} Terms of the form \eqref{eq:t2} are more tricky and are treated by the BGP2. For any $x \in \bb Z$ and $\ell \in \bb N$, let us denote by $\vec\eta^\ell(x)$ the empirical density in the box $\Lambda_x^\ell$  defined as
\begin{equation}\label{eq:average}
\vec{\eta}^{\ell}(x)=\frac{1}{\ell}\sum_{y=x+1}^{x+\ell}\bar{\eta}(y).
\end{equation}
For the sake of simplicity we write $\varepsilon n$ for $\lfloor \varepsilon n \rfloor$, its integer part.  With these notations, Theorem \ref{theo:BG} and Corollary \ref{cor_BG} (which are stated in the next section) roughly say that any term of the form \eqref{eq:t2} can be rewritten as 
\begin{equation}\label{eq:rest}
\frac{bn^{1-\gamma}}{\sqrt{n}}\int_0^t\sum_{x\in\bb Z} \nabla_n H\Big(\frac{x}{n}\Big)\Big\{ \big(\vec\eta_{sn^2}^{\varepsilon n}(x)\big)^2 - \frac{\chi(\rho)}{\varepsilon n} \Big\} \; ds 
\end{equation}
plus a term which vanishes in $\bb L^2(\bb P_\rho^n)$ as $n\to\infty$ first, and then $\varepsilon \to 0$. 
Two cases are distinguished:
\begin{enumerate}
\item if $\gamma >\frac12$, the Cauchy-Schwarz inequality proves that \eqref{eq:rest} vanishes in $\bb L^2(\bb P_\rho^n)$: 
\begin{align}
\frac{b^2n^{2-2\gamma}}{n}&\bb E_\rho^n\bigg[\bigg(\int_0^t \sum_{x\in\bb Z} \nabla_n H\Big(\frac{x}{n}\Big) \Big\{ \big(\vec\eta_{sn^2}^{\varepsilon n}(x)\big)^2 - \frac{\chi(\rho)}{\varepsilon n} \Big\} \; ds\bigg)^2\bigg] \notag\\
& \leq Ct^2 n^{1-2\gamma} \bigg\{\varepsilon n \sum_{x\in\bb Z} \Big[  \nabla_n H\Big(\frac{x}{n}\Big)  \Big]^2  \nu_\rho\bigg(\Big( \big(\vec\eta^{\varepsilon n}(0)\big)^2 - \frac{\chi(\rho)}{\varepsilon n}\Big)^2\bigg) \bigg\} \label{eq:ine}\\
& \leq C t^2 n^{1-2\gamma}  (\varepsilon n) \bigg\{\sum_{x\in\bb Z}  \Big[  \nabla_n H\Big(\frac{x}{n}\Big)  \Big]^2\bigg\}  \frac{1}{(\varepsilon n)^2}   = Ct^2 \; \frac{n^{1-2\gamma}}{\varepsilon} \bigg\{\frac1n\sum_{x\in\bb Z}  \Big[  \nabla_n H\Big(\frac{x}{n}\Big)  \Big]^2\bigg\} , \notag
\end{align}
which vanishes as $n\to\infty$ (and $\varepsilon$ fixed), as soon as $\gamma > \frac12$.  {In order to obtain the first inequality \eqref{eq:ine}, we use the fact that $\vec\eta^{\varepsilon n}(x)$ and $\vec\eta^{\varepsilon n}(y)$ are two independent variables if $|x-y| > \varepsilon n$.}  
\item if $\gamma = \frac12$, note that 
\begin{equation}\label{eq:nonlinear}
\int_0^t\sum_{x\in\bb Z} \nabla_n H\Big(\frac{x}{n}\Big)\big(\vec\eta_{sn^2}^{\varepsilon n}(x)\big)^2\; ds = \int_0^t \frac{1}{n}\sum_{x\in\bb Z} \nabla_nH\Big(\frac{x}{n}\Big) \Big[\mc Y_s^n\Big(\iota_\varepsilon \Big(\frac x n\Big)\Big)\Big]^2\; ds,
\end{equation}
where for any $u\in \bb R$, $\iota_\varepsilon(u):\bb R\to\bb R$ is defined as  $\iota_\varepsilon(u)(v):=\varepsilon^{-1}\mathbf{1}_{(u,u+\varepsilon]}(v)$. This last term \eqref{eq:nonlinear} gives rise to the non-linear term appearing in the SBE (see (2) in Definition \ref{def:sbe}).  {Besides, the additional term 
\[ b\int_0^t \sum_{x\in\bb Z} \nabla_n H\Big(\frac{x}{n} \Big) \frac{\chi(\rho)}{\varepsilon n}\; ds\]
equals 0 since the sum of the discrete gradient vanishes. 
}
\end{enumerate}

\bigskip

\paragraph{\textbf{d) Limit of degree-three terms}} Terms of the form \eqref{eq:t3}, for $\gamma \geq \frac{1}{2}$, vanish in $\bb L^2(\bb P_\rho^n)$, as $n\to\infty$. This is a consequence of Lemma \ref{lemma:1stBGdeg_three} (proved in Section \ref{sec:corollary}). 

\bigskip 

Let us now put every contribution together (recall that $m=2$). The martingale decomposition \eqref{eq:martingale} rewrites as 
\begin{align*}
\mc M_t^n(H)  = \mc Y_t^n(H) & -\mc Y_0^n(H) - \rho \int_0^t \mc Y_s^n(\Delta_n H) \; ds \\ & + \mathbf{1}_{\gamma=\frac{1}{2}} \frac{b}{2} \big(-4\rho-4(2\rho-1)\big) \int_0^t \frac{1}{n}\sum_{x\in\bb Z} \nabla_nH\Big(\frac{x}{n}\Big) \Big[\mc Y_s^n\Big(\iota_\varepsilon \Big(\frac x n\Big)\Big)\Big]^2\; ds
\end{align*}
plus a term that vanishes in $\bb L^2(\bb P_\rho^n)$ as $n\to\infty$. We recover exactly the martingale problems satisfied by \eqref{eq:OU} and \eqref{eq:sbe-simplified} as defined previously, in the case $m=2$.  We note however that for other values of $m$ we can follow exactly the same strategy and derive similar results. 

%
%
%
%
%
%
%
\bigskip 

\paragraph{\textsc{Proof of items (1) and (2): }} Let $\mc{Y}_t$ be a limit point of $\mc{Y}_t^n$. First, we fix a test function $H\in\mathcal S(\bb R)$ and a time $t>0$. By computing the characteristic function of $\mathcal{Y}^n_t(H)$, it is simple to show that 
$\mathcal{Y}_t(H)$ is Gaussian with variance 
$\chi(\rho)\|H\|_2^2$. This shows (1). Second, from the estimate that we obtained in the BGP2 (Theorem \ref{theo:BG}), it can be shown, by following the arguments in \cite{gj2014},  that \eqref{eq:energy_estimate} holds, which  implies the existence of $\mathcal B_t$.

\begin{remark}
We note that in the case $m\geq 3$, the strategy of the proof is the same as above. The only difference is that in the decomposition \eqref{eq:t11}-\eqref{eq:t3},  we will have $m+1$ terms  each one corresponding to a  polynomial of degree $k$ with $k\leq m+1$. To treat each one of the terms  with $4\leq k\leq m+1$, we use an ad hoc version of Lemma \ref{lemma:1stBGdeg_three}, please see Remark \ref{rem:lemma6.2_higher}.
\end{remark}

\section{Second order Boltzmann-Gibbs principle}\label{sec:BG} 
\begin{theorem}[Second order Boltzmann-Gibbs principle for degree two polynomial functions]
\label{theo:BG}
For any function $V:\mathbb{Z}\to{\mathbb{R}}$ (possibly depending on $n$) that satisfies: for all $n\in\mathbb{N}$
\begin{equation}\label{vinl2}
 \|V\|_{2,n}^2:=n^{-1}\sum_{x\in\mathbb{Z}}V^2(x)\leq K <\infty,\end{equation}
we have 
\[
\lim_{\varepsilon \to 0} \lim_{n\to\infty}\mathbb{E}^n_{{\rho}}\bigg[\Big(\int_{0}^t \sum_{x\in\mathbb{Z}}V(x)\Big\{\bar{\eta}_{sn^{2}}(x)\bar{\eta}_{sn^{2}}(x+1)-\big(\vec{\eta}_{sn^{2}}^{\varepsilon n}(x)\big)^2+\frac{\chi(\rho)}{\varepsilon n}\Big\}ds\Big)^2\bigg]
=0.\]
\end{theorem}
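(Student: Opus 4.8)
The strategy is the standard multiscale ``one-block / two-blocks'' replacement scheme adapted to the degenerate setting, following \cite{gj2014,GJS,FGS,GS}. The heart of the matter is to pass from the microscopic two-point function $\bar\eta(x)\bar\eta(x+1)$ to the square of the empirical density $(\vec\eta^{\varepsilon n}(x))^2$, minus the correction $\chi(\rho)/(\varepsilon n)$ which accounts for the diagonal term $\bar\eta(y)^2$ that appears when squaring the average. First I would reduce, via the elementary inequality $\E^n_\rho[(\int_0^t G(\eta_{sn^2})\,ds)^2]\leq C t \, \| G \|_{-1,n}^2$ (the Kipnis--Varadhan-type bound, valid since we start from the stationary measure $\nu_\rho$ and the integrand is mean-zero), the whole statement to a \emph{static} $H_{-1}$ estimate: it suffices to bound the variational norm $\| \sum_x V(x)\{\tau_x f_1 - \tau_x f_{\ell}\}\|_{-1,n}$, where $f_1(\eta)=\bar\eta(0)\bar\eta(1)$ and $f_\ell$ is an intermediate local function supported on a box of size of order $2^k$, and telescope over a dyadic sequence of scales $1 = \ell_0 < \ell_1 < \cdots < \ell_{\kappa} \sim \varepsilon n$. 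At each scale one replaces $f_{\ell_k}$ by $f_{\ell_{k+1}}$ and pays a cost controlled by the Dirichlet form.

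\textbf{Key steps.} (i) \emph{Dyadic decomposition.} Write $\bar\eta(x)\bar\eta(x+1) - (\vec\eta^{\varepsilon n}(x))^2 + \chi(\rho)/(\varepsilon n)$ as a telescoping sum $\sum_{k=0}^{\kappa-1}\big(g_{\ell_k}(\tau_x\eta) - g_{\ell_{k+1}}(\tau_x\eta)\big)$ of differences of suitably recentered block functions, where $g_\ell(\eta) = \vec\eta^{\ell}(0)\vec\eta^\ell(\ell) $ roughly speaking (products of empirical densities over adjacent disjoint blocks), so that $g_{\ell_0}$ recovers the microscopic term and $g_{\ell_\kappa}$ is close to $(\vec\eta^{\varepsilon n}(0))^2$. (ii) \emph{One-step replacement.} For each $k$, bound $\| \sum_x V(x) \tau_x(g_{\ell_k} - g_{\ell_{k+1}}) \|_{-1,n}^2$ by $C\, \ell_{k+1} \| V \|_{2,n}^2 \cdot \langle (g_{\ell_k}-g_{\ell_{k+1}}), (-\mathcal S_n)^{-1}(g_{\ell_k}-g_{\ell_{k+1}})\rangle_{\nu_\rho}$, localized to a box of size $\ell_{k+1}$, and estimate the latter local variance by the Dirichlet form of a test function that transports a particle across the block. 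The point is that $g_{\ell_k} - g_{\ell_{k+1}}$ is, up to lower-order terms, a difference of empirical averages over nested blocks, which can be written as $(\mathcal S_n h)$ for an explicit $h$ with Dirichlet form $\mathcal D(h) \lesssim \ell_{k+1}^{-1}$ times a combinatorial factor. (iii) \emph{Summing the costs.} Choosing the scales dyadically and optimizing, the sum of the one-step costs is of order $\sum_k \ell_{k+1}^{2} / \ell_{k+1}^{?}$ — geometrically decaying after the right choice — leaving a bound of order $C t \, K \, \varepsilon$, which vanishes as $\varepsilon \to 0$ after $n\to\infty$. (iv) \emph{The final correction.} Check that replacing $g_{\ell_\kappa}$ by exactly $(\vec\eta^{\varepsilon n}(0))^2 - \chi(\rho)/(\varepsilon n)$ costs a term of order $1/(\varepsilon n) \to 0$, coming from the $\lfloor \varepsilon n \rfloor$ diagonal terms each of size $1/(\varepsilon n)^2$.

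\textbf{Main obstacle.} The crucial and non-standard difficulty is step (ii) in the \emph{degenerate} setting: the symmetric generator $\mathcal S_n$ has vanishing rates on blocked configurations, so one cannot simply move a particle across a block by a direct exchange whose rate is bounded below. This is exactly where the \emph{mobile cluster} mechanism of Section \ref{sec:blocked} enters. The plan is: on the event $\mathcal G_{\ell}(x)$ that the box contains a mobile cluster, construct the transport test function $h$ as a sum over allowed paths — each path being a sequence of at most six uses of any given bond (for $m=2$), carrying the cluster to the vicinity of the two sites whose occupation values we wish to exchange, performing the exchange, and restoring the cluster. The Dirichlet form of this $h$ is controlled because each bond is used a bounded number of times and the path has length $O(\ell)$, giving $\mathcal D(h) \leq C \ell$ times the desired local variance, which is the same order one would get in the non-degenerate case. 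On the complementary bad event $\mathcal E\setminus\mathcal G_\ell(x) = \mathcal B_\ell(x)$, we do not attempt any replacement; instead we bound the contribution crudely by $\| V \|_{2,n}$ times $\nu_\rho(\mathcal B_\ell(x))^{1/2}$, which by \eqref{eq:decaysigmam} is exponentially small in $\ell$ and hence summable over the dyadic scales. Balancing the exponentially small bad-set error against the polynomially growing good-set cost forces the smallest scales $\ell_k$ to be at least of order $\log n$ (so that $\nu_\rho(\mathcal B_{\ell_k}) \ll n^{-C}$), which is harmless since the dyadic sum starts at $\ell_0=1$ only in a few bottom layers whose individual cost is $O(1)$ and whose bad-set error we handle separately by a direct second-moment computation using that $\bar\eta(x)\bar\eta(x+1)$ is bounded. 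I expect the bookkeeping of how the mobile-cluster test function interacts with the recentering (to keep everything mean-zero under $\nu_\rho$) to be the most delicate calculation, but conceptually the exponential decay \eqref{eq:decaysigmam} is precisely what compensates for the absence of a spectral gap.
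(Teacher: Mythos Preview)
Your overall architecture---Kipnis--Varadhan $H_{-1}$ bound, dyadic multi-scale telescoping, good/bad decomposition, and a path argument via the mobile cluster---is exactly the scheme the paper carries out. But two points in your plan are genuine gaps, not just missing bookkeeping.

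\textbf{The small-scale layers.} You propose to start the dyadic scheme at $\ell_0=1$, search for a mobile cluster inside the current box $\mc G_{\ell_k}(x)$, and handle the bottom layers $\ell_k\lesssim\log n$ ``separately by a direct second-moment computation''. This does not close. On the bad set, a Cauchy--Schwarz-in-time bound gives at best $t^2$ times the static variance of $\sum_x V(x)\tau_x(g_{\ell_k}-g_{\ell_{k+1}})\mathbf 1_{\mc E\setminus\mc G_{\ell_k}}$, which is of order $t^2\,n\,\|V\|_{2,n}^2\,\nu_\rho(\mc E\setminus\mc G_{\ell_k})$ up to constants; for $\ell_k=O(1)$ the bad-set probability is bounded away from zero and this term is of order $t^2 n$, which does \emph{not} vanish. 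The paper's fix is to decouple the mobile-cluster box from the replacement scale: it fixes once and for all an auxiliary box of size $\ell_0\sim n^\theta$ (any $\theta\in(0,\tfrac12)$ works), always searches for the cluster in that box placed \emph{adjacent} to the support of the function being replaced (so the indicator is independent of the factor $\varphi$), and performs a single one-block step from scale $1$ to scale $\ell_0$ at cost $Ct\,\ell_0^2/n$ before starting the dyadics at $\ell_0$. The bad-set cost is then $Ct^2 n\ell(1-\rho^2)^{\ell_0/2}$, uniformly over all scales, which vanishes because $\ell_0$ is polynomial in $n$. You should adopt this decoupling; with it the ``bottom layers'' issue disappears entirely.

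\textbf{The final correction.} Your step (iv)---passing from a product of averages over disjoint blocks to $(\vec\eta^{\varepsilon n}(x))^2-\chi(\rho)/(\varepsilon n)$---is not an $O(1/(\varepsilon n))$ diagonal correction. The difference $\vec\eta^\ell(x)\bar\eta(x)-(\vec\eta^\ell(x))^2$ has no reason to be small in variance; it requires another full application of the restriction lemma and the path argument (this is the paper's Section~5.6), and contributes a term of order $Ct\,\ell/n + Ct^2 n/\ell^2$ to the final bound. The $t n/\ell^2$ piece is what forces you to take $\ell=\varepsilon n$ rather than anything smaller, and comes from a genuine Cauchy--Schwarz remainder, not from counting diagonal terms.

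One structural remark: the five-term decomposition the paper uses (with left- and right-pointing averages $\vecleft\eta^{\ell}$, $\vec\eta^{\ell}$) looks baroque, but it is engineered precisely so that at every step the factor $\varphi$ and the sites $y,z$ being exchanged lie on opposite sides, with the mobile-cluster box placed beyond $y,z$. Your condition ``$\varphi(\eta^{y,z})=\varphi(\eta)$'' in the path lemma is exactly this, and your choice $g_\ell=\vec\eta^\ell(0)\vec\eta^\ell(\ell)$ respects it; just make sure your telescoping preserves this disjointness at every scale.
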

As a consequence of the previous result,  the  second order Boltzmann-Gibbs principle for occupation sites which are not nearest neighbors can be derived. 

\begin{corollary} \label{cor_BG}
For any $y\in\mathbb{Z}$ and any function $V:\mathbb{Z}\to{\mathbb{R}}$ that satisfies \eqref{vinl2},
\[
\lim_{\varepsilon \to 0} \lim_{n\to\infty}\mathbb{E}^n_{{\rho}}\bigg[\Big(\int_{0}^t \sum_{x\in\mathbb{Z}}V(x)\Big\{\bar{\eta}_{sn^{2}}(x)\bar{\eta}_{sn^{2}}(x+y)-\big(\vec{\eta}_{sn^{2}}^{\varepsilon n}(x)\big)^2+\frac{\chi(\rho)}{\varepsilon n}\Big\}ds\Big)^2\bigg]
=0.\]
\end{corollary}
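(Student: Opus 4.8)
\textbf{Plan for proving Corollary \ref{cor_BG}.} The corollary extends Theorem \ref{theo:BG} from the nearest-neighbor product $\bar\eta(x)\bar\eta(x+1)$ to an arbitrary product $\bar\eta(x)\bar\eta(x+y)$. Since the theorem already provides the desired $\bb L^2$ estimate with $\bar\eta(x)\bar\eta(x+1)$ in place, it suffices to show that, for each fixed $y\neq 0,1$, the difference
\[
\int_0^t \sum_{x\in\bb Z} V(x)\big\{\bar\eta_{sn^2}(x)\bar\eta_{sn^2}(x+y)-\bar\eta_{sn^2}(x)\bar\eta_{sn^2}(x+1)\big\}\,ds
\]
vanishes in $\bb L^2(\bb P_\rho^n)$ as $n\to\infty$. (The case $y=0$ needs a separate but easier treatment since $\bar\eta(x)^2 = (1-2\rho)\bar\eta(x)+\chi(\rho)$, so it reduces to a degree-one Boltzmann-Gibbs estimate via Lemma \ref{lemma:1stBG}; the case $y=1$ is the theorem itself.) Thus the plan is to reduce ``shifting the second coordinate by one'' to quantities controlled by the first-order Boltzmann-Gibbs principle of Lemma \ref{lemma:1stBG}, and then iterate $|y-1|$ times.

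\textbf{Key steps.} First I would write the telescoping identity
\[
\bar\eta(x)\bar\eta(x+y)-\bar\eta(x)\bar\eta(x+1) = \sum_{z} \bar\eta(x)\big(\bar\eta(x+z+1)-\bar\eta(x+z)\big),
\]
where $z$ ranges over the appropriate finite set of integers between $1$ and $y$ (with signs reversed if $y<1$). Each summand is of the form $\tau_x\big(\bar\eta(0)\bar\eta(w+1)-\bar\eta(0)\bar\eta(w)\big)$ for a fixed $w$, which is a recentered local function of degree two. Next, I would invoke Lemma \ref{lemma:1stBG} (the first-order Boltzmann-Gibbs principle, whose statement in paragraph (a) of Section \ref{sec:strategy} asserts precisely that $\int_0^t n^{-1/2}\sum_x \nabla_n H(x/n)\,\bar\eta_{sn^2}(x)\bar\eta_{sn^2}(x+y)\,ds \to 0$ in $\bb L^2$); here the role of $\nabla_n H(x/n)$ is played by $V(x)$, and what matters is exactly the bound \eqref{vinl2} on $\|V\|_{2,n}$, which is the discrete analogue of $\|\nabla H\|_2^2<\infty$. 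Actually, the cleanest route is: for each fixed $w$, $\bar\eta(x)\bar\eta(x+w)$ is a degree-two local function, and Lemma \ref{lemma:1stBG} (applied with test-function weights $V$) gives
\[
\lim_{n\to\infty}\bb E_\rho^n\bigg[\Big(\int_0^t \sum_{x\in\bb Z} V(x)\,\bar\eta_{sn^2}(x)\bar\eta_{sn^2}(x+w)\,ds\Big)^2\bigg]=0
\]
for every $w\neq 0$. Applying this to both $w=y$ and $w=1$ and using $(a-b)^2\leq 2a^2+2b^2$ immediately yields the claim, once we have separately identified the term $\big(\vec\eta^{\varepsilon n}(x)\big)^2-\chi(\rho)/(\varepsilon n)$, which is common to both sides and already handled by Theorem \ref{theo:BG}.

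\textbf{Main obstacle.} The only genuine subtlety is matching the normalizations: Lemma \ref{lemma:1stBG} as quoted carries a factor $1/\sqrt n$ and a bounded test function, whereas here $V$ satisfies $n^{-1}\sum_x V^2(x)\leq K$, i.e. $V$ is of order $\sqrt n$ pointwise. One must check that the first-order Boltzmann-Gibbs estimate used inside the proof of Theorem \ref{theo:BG} is in fact stated for general weights $V$ obeying \eqref{vinl2} (which it must be, since Theorem \ref{theo:BG} itself is stated for such $V$), so that the degree-two single-site-shift terms inherit the same vanishing. Concretely, the bound coming from that first-order replacement scales like $C\,t\,\ell/n\cdot\|V\|_{2,n}^2$ for the relevant mesoscopic scale $\ell$, and the point is that shifting a coordinate by a bounded amount $|y|$ costs only $O(|y|)$ such terms, each vanishing, so no new smallness in $\varepsilon$ is needed. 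Thus I expect the corollary to follow from Theorem \ref{theo:BG} plus a short triangle-inequality argument together with Lemma \ref{lemma:1stBG}; the bookkeeping of constants depending on $y$ (but not on $n$ or $\varepsilon$) is the part requiring care.
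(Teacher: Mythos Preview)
Your ``cleanest route'' has a genuine gap. You propose to bound the difference via
\[
(a-b)^2\le 2a^2+2b^2,\qquad a=\int_0^t\sum_x V(x)\bar\eta(x)\bar\eta(x+y)\,ds,\quad b=\int_0^t\sum_x V(x)\bar\eta(x)\bar\eta(x+1)\,ds,
\]
and then apply Lemma~\ref{lemma:1stBG} to each piece. But Lemma~\ref{lemma:1stBG} carries the prefactor $1/\sqrt n$: it says $\bb E_\rho^n[(n^{-1/2}a)^2]\to 0$, i.e.\ $\bb E_\rho^n[a^2]=o(n)$, not $\bb E_\rho^n[a^2]\to 0$. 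If you look at the proof of that lemma, the two error terms are $Ct\ell^2/n^2$ and $t^2/\ell$; without the $1/\sqrt n$ these become $Ct\ell^2/n$ and $t^2 n/\ell$, and there is no choice of $\ell$ making both vanish. Indeed the individual quantity $a$ (or $b$) does \emph{not} vanish in $\bb L^2(\bb P_\rho^n)$ --- that is precisely why Theorem~\ref{theo:BG} is nontrivial and why the subtraction of $(\vec\eta^{\varepsilon n})^2-\chi(\rho)/(\varepsilon n)$ is needed there. So splitting $a-b$ into $a$ and $b$ throws away exactly the cancellation that makes the corollary true.

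The paper's route, which you almost reach in your ``Main obstacle'' paragraph but do not carry through, is to control the difference $a-b$ \emph{directly}: write $\bar\eta(x)(\bar\eta(x+y)-\bar\eta(x+1))$, apply Lemma~\ref{lem:restriction} to insert the good-box indicator $\mathbf 1_{\mc G_{\ell_0}}$, and then use Proposition~\ref{prop:one-block} (equivalently Lemma~\ref{path}) to swap the occupation variables at $x+y$ and $x+1$. Here the support size is $\ell=O(|y|)$, a fixed constant, and only $\ell_0\to\infty$ is needed to kill the bad-box probability; the resulting bound $Ct(\ell+\ell_0)^2 n^{-1}\|V\|_{2,n}^2 + Ct^2 n\ell(1-\rho^2)^{\ell_0/2}\|V\|_{2,n}^2$ does go to zero (take e.g.\ $\ell_0=n^{1/3}$). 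The key difference with your approach is that this argument exploits the \emph{gradient} structure $\bar\eta(x+y)-\bar\eta(x+1)$, whereas Lemma~\ref{lemma:1stBG} estimates each product $\bar\eta(x)\bar\eta(x+w)$ on its own, which is one order of $\sqrt n$ too crude.
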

We do not prove this corollary since one just has to combine Theorem \ref{theo:BG} with the fact that occupation sites can be exchanged, which is a simple application of Lemma~\ref{lem:restriction} and Proposition \ref{prop:one-block} stated below.

\begin{remark}
As explained in the strategy of the proof (Section \ref{sec:strategy}), Theorem \ref{theo:BG} and Corollary \ref{cor_BG} will be used with $V(x)=\nabla_nH(\frac{x}{n})$, and since $H$ is in $\mc{S}(\bb{R})$,  \eqref{vinl2} is indeed satisfied.
\end{remark}

Now, we present the proof of  Theorem \ref{theo:BG}. In fact, we will prove a more precise statement, that is, for any $\ell$ sufficiently large and any $t>0$
\begin{multline}\label{eq:BGexpo}
\mathbb{E}^n_{{\rho}}\bigg[\Big(\int_{0}^t \sum_{x\in\mathbb{Z}}V(x)\Big\{\bar{\eta}_{sn^{2}}(x)\bar{\eta}_{sn^{2}}(x+1)-\big(\vec{\eta}_{sn^{2}}^{\ell}(x)\big)^2+\frac{\chi(\rho)}{\ell}\Big\}ds\Big)^2\bigg]\\
\le  Ct\Big\{ \frac{\ell}{n} +\frac{tn}{\ell^2}\Big\}\|V\|_{2,n}^2.
\end{multline}
We note that by fixing $\varepsilon >0$ and choosing  $\ell=\varepsilon n$, the bound \eqref{eq:BGexpo} implies  Theorem \ref{theo:BG}.

To prove \eqref{eq:BGexpo} we use ideas similar to those exposed in \cite{GS}. Before proceeding, we introduce all the notations we need in this section. For any function $\psi:\mathcal {E} \to\mathbb {R}$ we denote by $\mathrm{Var}(\psi)$ the variance of $\psi$ with respect to the measure $\nu_\rho$. Recall that we denote by $\bar{\eta}(x)=\eta(x)-\rho$ the centered variable and by $\chi(\rho)$ the static compressibility of the system. 
Let us define the Dirichlet form associated with the accelerated process as
\begin{equation}\label{eq:dirichlet}
\mathcal{D}_n(f)=\int_{\mc E} f(\eta) \; (-n^2\mathcal{L}_n)f(\eta) \;\nu_\rho(d\eta)=\frac{n^2}{4}\sum_{x\in\Z}I_{x,x+1}(f),
\end{equation}
where
\[
I_{x,x+1}(f)=\int_{\mc E} c_{x,x+1}(\eta) \; \big(f(\eta^{x,x+1})-f(\eta)\big)^2 \nu_\rho(d\eta).
\] 
 We also define the empirical average directed to the left in the spirit of \eqref{eq:average} as  
 \begin{equation}\label{eq:average_left}
 \vecleft\eta^\ell(x):=\frac{1}{\ell} \sum_{y=x-\ell}^{x-1} \bar\eta(y),\qquad x\in\mathbb{Z}, \ell \in\mathbb{N}.
 \end{equation}
  To prove \eqref{eq:BGexpo}, we introduce first a starting box of size $\ell_0 \in \mathbb{N}$. We are going to reach progressively the box of size $\ell\geq \ell_0$ by  using the convexity inequality $(a+b)^2\le 2a^2+2b^2$ several times, to bound the left-hand side of \eqref{eq:BGexpo} from above (up to a constant) by
\begin{align}
\label{eq:term1}
& \mathbb{E}^n_{{\rho}}\bigg[\Big(\int_{0}^t \sum_{x\in\mathbb{Z}}V(x)\bar{\eta}_{sn^2}(x)\big(\bar{\eta}_{sn^2}(x+1)-\vec{\eta}_{sn^2}^{\ell_0}(x)\big)\;ds\Big)^2\bigg]\\
\label{eq:term2}
& +\ \mathbb{E}^n_{{\rho}}\bigg[\Big(\int_{0}^t \sum_{x\in\mathbb{Z}}V(x)\vec{\eta}_{sn^2}^{\ell_0}(x)\big(\bar {\eta}_{sn^2}(x)-\vecleft{\eta}_{sn^2}^{\ell_0}(x)\big)\;ds\Big)^2\bigg]\\
\label{eq:term3}
& +\ \mathbb{E}^n_{{\rho}}\bigg[\Big(\int_{0}^t \sum_{x\in\mathbb{Z}}V(x)\vecleft\eta_{sn^2}^{\ell_0}(x) \big(\vec\eta_{sn^2}^{\ell_0}(x)-\vec\eta_{sn^2}^\ell(x)\big)\;ds\Big)^2\bigg]
\\ \label{eq:term4}
& +\  \mathbb{E}^n_{{\rho}}\bigg[\Big(\int_{0}^t \sum_{x\in\mathbb{Z}}V(x) \vec\eta_{sn^2}^\ell(x)\big(\vecleft\eta_{sn^2}^{\ell_0}(x)-\bar\eta_{sn^2}(x)\big)\;ds\Big)^2\bigg]\\
\label{eq:term5}
& +\ \mathbb{E}^n_{{\rho}}\bigg[\Big(\int_{0}^t \sum_{x\in\mathbb{Z}}V(x) \Big\{\vec{\eta}_{sn^2}^\ell(x)\bar{\eta}_{sn^2}(x)
-\big(\vec{\eta}_{sn^2}^\ell(x)\big)^2+\frac{\chi(\rho)}{\ell}\Big\}\;ds\Big)^2\bigg].
\end{align}
The decomposition above involves five main terms, that we are going to treat separately. The trickiest term to handle is the third one \eqref{eq:term3}, for which a \emph{multi-scale analysis} is necessary. At the end of the computations, we will make a choice for $\ell_0$, and it will turn out that one can choose $\ell_0=n^\theta$ for any $\theta\in(0,\frac12)$. This choice is going to be more clear in the next paragraphs. 

We cannot directly apply the result of \cite{GS} because of the possible degeneracy of the rates: we first have to restrict the set of configurations on which the integrals given in \eqref{eq:term1}--\eqref{eq:term5} are performed. More precisely, we separate the set of configurations into two sets: the \emph{irreducible} component that contains all configurations with at least one mobile cluster in a suitable position, and the remaining configurations, which have small weight under the equilibrium measure $\nu_\rho$. This is the purpose of Section \ref{sec:restric}. After that, we estimate the contributions of all the terms by using the same ideas as in \cite{GS} (see Sections \ref{sec:multi-scale} and \ref{sec:clever}). This is where the seemingly overly complicated decomposition into five terms comes into play: we need that the difference appearing in each term has a support disjoint from that of the multiplying function (\emph{e.g.} $\bar{\eta}(x)$ and $\bar{\eta}(x+1)-\vec{\eta}^{\ell_0}(x)$ in \eqref{eq:term1}).

 To keep the notation simple in the following argument, we let $C$ and $C(\rho)$  denote  constants that do not depend on $n$ nor on $t,\ell, \ell_0$, that may change from line to line.  In all that follows, $V:\mathbb{Z}\to{\mathbb{R}}$ is a  function that satisfies \eqref{vinl2}.

\subsection{Restriction of the set of configurations} \label{sec:restric}

In the following we denote by $\mathbf{1}_A(\eta)$ the indicator function that equals 1 if $\eta \in A$ and 0 otherwise.
\begin{lemma}\label{lem:restriction}
Let $\psi:\mathcal{E}\to\R$ be a mean zero local function in $\mathbb L^2(\nu_\rho)$ whose  support is contained in $\{-\ell+1,...,\ell\}$.
Then, there exists $C>0$  such that, for any $t>0$, any integer $\ell_0\le \ell$ and any $n \in \bb N$
\begin{multline}
\E_\rho^n\bigg[\Big(\int_0^t \sum_{x\in\bb Z} V(x)\; \tau_x  \psi (\eta_{sn^2})\; ds\Big)^2\bigg] \le Ct^2\; n\ell \; (1-\rho^2)^{\ell_0/2} \; \mathrm{Var}(\psi)\; \|V\|_{2,n}^2\\
+ 2\E_\rho^n\bigg[  \Big(\int_0^t \sum_{x\in\bb Z} V(x)\; \tau_x\psi(\eta_{sn^2})\; \mathbf{1}_{\mc G_{\ell_0}(x+\ell)}(\eta_{sn^2})\; ds\Big)^2\bigg]. \label{eq:restriction}
\end{multline}
\end{lemma}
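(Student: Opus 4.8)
The statement is essentially a ``localization'' lemma: it says that in the time integral $\int_0^t \sum_x V(x)\tau_x\psi(\eta_{sn^2})\,ds$ one may insert the indicator $\mathbf 1_{\mc G_{\ell_0}(x+\ell)}$ at the cost of an error that is small because bad configurations have exponentially small probability. My plan is as follows. First, split each summand using $\tau_x\psi = \tau_x\psi\,\mathbf 1_{\mc G_{\ell_0}(x+\ell)} + \tau_x\psi\,\mathbf 1_{\mc B_{\ell_0}(x+\ell)}$, where $\mc B_{\ell_0}(x+\ell) = \mc E\setminus\mc G_{\ell_0}(x+\ell)$ is the bad set just to the right of the support of $\tau_x\psi$ (recall $\tau_x\psi$ is supported in $\{x-\ell+1,\dots,x+\ell\}$, so the box $\Lambda_{x+\ell}^{\ell_0}$ is disjoint from it). Apply $(a+b)^2\le 2a^2+2b^2$; the ``good'' piece is exactly the second term on the right-hand side of \eqref{eq:restriction}, so it remains to bound the ``bad'' piece
\[
\E_\rho^n\bigg[\Big(\int_0^t \sum_{x\in\bb Z} V(x)\,\tau_x\psi(\eta_{sn^2})\,\mathbf 1_{\mc B_{\ell_0}(x+\ell)}(\eta_{sn^2})\,ds\Big)^2\bigg]
\]
by $C t^2 n\ell\,(1-\rho^2)^{\ell_0/2}\,\mathrm{Var}(\psi)\,\|V\|_{2,n}^2$.

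For the bad piece I would use the elementary bound $(\int_0^t f_s\,ds)^2 \le t\int_0^t f_s^2\,ds$ together with stationarity of $\nu_\rho$ (so the time integral just produces a factor $t^2$ after integrating), reducing matters to estimating the static expectation
\[
\nu_\rho\bigg[\Big(\sum_{x\in\bb Z} V(x)\,\tau_x\psi(\eta)\,\mathbf 1_{\mc B_{\ell_0}(x+\ell)}(\eta)\Big)^2\bigg]
= \sum_{x,x'} V(x)V(x')\,\nu_\rho\big(\tau_x\psi\,\mathbf 1_{\mc B_{\ell_0}(x+\ell)}\cdot \tau_{x'}\psi\,\mathbf 1_{\mc B_{\ell_0}(x'+\ell)}\big).
\]
The function $\tau_x\psi\,\mathbf 1_{\mc B_{\ell_0}(x+\ell)}$ has support contained in a window of width $2\ell+\ell_0 \le 3\ell$ around $x$, so the covariance terms vanish unless $|x-x'|\le 3\ell$; this kills the sum down to a band of width $O(\ell)$. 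By Cauchy--Schwarz on each surviving pair, $|V(x)V(x')|\le \tfrac12(V(x)^2+V(x')^2)$, and
\[
\big|\nu_\rho\big(\tau_x\psi\,\mathbf 1_{\mc B_{\ell_0}(x+\ell)}\cdot \tau_{x'}\psi\,\mathbf 1_{\mc B_{\ell_0}(x'+\ell)}\big)\big| \le \nu_\rho\big((\tau_x\psi)^2\,\mathbf 1_{\mc B_{\ell_0}(x+\ell)}\big)^{1/2}\nu_\rho\big((\tau_{x'}\psi)^2\,\mathbf 1_{\mc B_{\ell_0}(x'+\ell)}\big)^{1/2}.
\]
Since $\tau_x\psi$ depends only on coordinates in $\{x-\ell+1,\dots,x+\ell\}$ while $\mathbf 1_{\mc B_{\ell_0}(x+\ell)}$ depends on the disjoint block $\{x+\ell+1,\dots,x+\ell+\ell_0\}$, these two functions are \emph{independent} under the product measure $\nu_\rho$; hence $\nu_\rho\big((\tau_x\psi)^2\,\mathbf 1_{\mc B_{\ell_0}(x+\ell)}\big) = \mathrm{Var}(\psi)\,\nu_\rho(\mc B_{\ell_0}(x+\ell)) \le \mathrm{Var}(\psi)\,(1-\rho^2)^{\lfloor \ell_0/2\rfloor}$ by \eqref{eq:decaysigma} (using that $\psi$ is mean zero so $\nu_\rho((\tau_x\psi)^2)=\mathrm{Var}(\psi)$). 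Putting this together: the static expectation is bounded by $\sum_x \sum_{|x'-x|\le 3\ell} \tfrac12(V(x)^2+V(x)^2)\,\mathrm{Var}(\psi)\,(1-\rho^2)^{\lfloor\ell_0/2\rfloor} \le C\ell\,\mathrm{Var}(\psi)\,(1-\rho^2)^{\lfloor\ell_0/2\rfloor}\sum_x V(x)^2 = C n\ell\,\mathrm{Var}(\psi)\,(1-\rho^2)^{\lfloor\ell_0/2\rfloor}\|V\|_{2,n}^2$. Multiplying by the $t^2$ from the time integral and absorbing the floor into the constant (or keeping $(1-\rho^2)^{\ell_0/2}$ up to constants, as in the statement) gives the claimed bound. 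For general $m$ one replaces $(1-\rho^2)$ by $(1-\rho^m)$ via \eqref{eq:decaysigmam}, and the support width becomes $O(m\ell)$, which only affects constants.

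I do not expect a serious obstacle here; the lemma is a soft second-moment estimate. The one point requiring a little care is the bookkeeping of \emph{supports}: one must check that the bad box $\Lambda_{x+\ell}^{\ell_0}$ is genuinely disjoint from the support $\{x-\ell+1,\dots,x+\ell\}$ of $\tau_x\psi$ (it is, since it starts at $x+\ell+1$), so that the independence factorization under $\nu_\rho$ is legitimate, and that the combined support still has width comparable to $\ell$ so the covariance band argument goes through. Everything else is Cauchy--Schwarz, stationarity, and the exponential decay estimate \eqref{eq:decaysigma}/\eqref{eq:decaysigmam} already recorded in the paper.
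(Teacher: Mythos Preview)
Your proposal is correct and follows essentially the same approach as the paper: both split via $\mathbf 1_{\mc G_{\ell_0}}+\mathbf 1_{\mc B_{\ell_0}}$, apply $(a+b)^2\le 2a^2+2b^2$, and then bound the bad piece by Cauchy--Schwarz in time plus stationarity, using independence of $\tau_x\psi$ and $\mathbf 1_{\mc B_{\ell_0}(x+\ell)}$ together with \eqref{eq:decaysigma}. The only cosmetic difference is in the bookkeeping of the spatial sum: the paper partitions $\bb Z$ into residue classes modulo $2\ell+\ell_0$ and uses the convexity inequality $(a_1+\cdots+a_p)^2\le p\sum a_i^2$ to make the cross terms vanish exactly within each class, whereas you expand the square directly and kill the off-diagonal terms outside a band of width $O(\ell)$ --- both routes yield the same $O(\ell)$ factor (note the minor typo $\tfrac12(V(x)^2+V(x)^2)$ should read $\tfrac12(V(x)^2+V(x')^2)$).
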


\begin{remark}\label{rem:restric}
Let us give some highlights for our choice to condition on $\mc G_{\ell_0}(x+\ell)$ in \eqref{eq:restriction}. Fix $x\in\mathbb{Z}$. We impose the configuration $\tau_x\eta_{sn^2}$ to belong to $\mc G_{\ell_0}(x+\ell)$. In other words, we want to find at time $sn^2$ a mobile cluster in the box $\{x+\ell+1,..., x+\ell+\ell_0\}$. Thus, the condition involves a box  of size $\ell_0$ which does not intersect the support of $\tau_x\psi(\eta)$, which is by assumption $\{x-\ell+1,..., x+\ell\}$.  We note that we could have chosen to use the condition $\mathbf{1}_{\mc G_{\ell_0}(x-\ell-\ell_0)}(\eta)$, which asks for a mobile cluster in the box $\{x-\ell-\ell_0+1,...,x-\ell\}$. The result is exactly the same.
\end{remark}

\begin{remark}\label{rem:vanish}
Note that whenever $\ell_0=n^\theta$, with $\theta>0$, and $\ell$ remains polynomial in $n$, the first term on the right-hand side of \eqref{eq:restriction} vanishes as $n\to \infty$.
\end{remark}

\begin{proof}
First, note that, for any $y\in\bb Z$ and $\ell_0 \in \bb N$, $\mathbf{1}_{\mc G_{\ell_0}(y)}+ \mathbf{1}_{\mc E \backslash \mc G_{\ell_0}(y)}\equiv 1$. Therefore, 
since the function $\tau_x\psi$ depends on the configuration variables $\{\eta(x-\ell+1),...,\eta(x+\ell)\}$ we can write $\tau_x\psi$ as a product of two functions with disjoint supports as $\tau_x\psi = \tau_x\psi  \big( \mathbf{1}_{\mc G_{\ell_0}(x+\ell)}+ \mathbf{1}_{\mc E \backslash \mc G_{\ell_0}(x+\ell)}\big).$
Note that $\mathbf{1}_{\mc G_{\ell_0}(x+\ell)}$ is a function that only depends on   \[\big\{\eta(x+\ell+1),\eta(x+\ell+2),...,\eta(x+\ell+\ell_0)\big\}.\] We use an elementary inequality and we get the bound
\begin{align*}
\E_\rho^n\bigg[\Big(\int_0^t \sum_{x\in\bb Z} V(x)\;  \tau_x \psi & (\eta_{sn^2})\; ds\Big)^2\bigg] \\  & \le  2\E_\rho^n\bigg[  \Big(\int_0^t \sum_{x\in\bb Z} V(x)\; \tau_x\psi(\eta_{sn^2})\; \mathbf{1}_{\mc E \backslash \mc G_{\ell_0}(x+\ell)}(\eta_{sn^2})\; ds\Big)^2\bigg]\\
& + 2\E_\rho^n\bigg[  \Big(\int_0^t \sum_{x\in\bb Z} V(x)\; \tau_x\psi(\eta_{sn^2})\; \mathbf{1}_{\mc G_{\ell_0}(x+\ell)}(\eta_{sn^2})\; ds\Big)^2\bigg].
\end{align*}
We now deal with the first term of the right-hand side of last equality. We want to use \eqref{eq:decaysigma}, and for this we decompose the set $\bb Z$ into $2\ell+\ell_0$ infinite disjoint subsets as 
\[
\bb Z = \bigcup_{k=0}^{2\ell+\ell_0-1} \Lambda_k, \quad \Lambda_k:=\Lambda_k(\ell,\ell_0)=k+ (2\ell+\ell_0)\bb Z =\big\{..., k-(2\ell+\ell_0), k, k+(2\ell+\ell_0),...\big\}.
\]
Now, from the convexity inequality $(a_1+\cdots+a_p)^2 \le p (a_1^2+\cdots+a_p^2)$, we can estimate
\begin{align}
\E_\rho^n\bigg[  \Big(\int_0^t &\sum_{x\in\bb Z} V(x)\;  \tau_x\psi(\eta_{sn^2})\; \mathbf{1}_{{\mc E \backslash \mc G_{\ell_0}(x+\ell)}}(\eta_{sn^2})\; ds\Big)^2\bigg] \notag \\
& \le (2\ell+\ell_0) \sum_{k=0}^{2\ell+\ell_0-1} \E_\rho^n\bigg[  \Big(\int_0^t \sum_{x\in\Lambda_k} V(x)\;\tau_x\psi(\eta_{sn^2})\; \mathbf{1}_{{\mc E \backslash \mc G_{\ell_0}(x+\ell)}}(\eta_{sn^2})\; ds\Big)^2\bigg] \notag \\
& \le (2\ell+\ell_0) \; t^2 \; \sum_{k=0}^{2\ell+\ell_0-1} \; \int \Big(\sum_{x\in\Lambda_k} V(x)\;\tau_x\psi(\eta)\; \mathbf{1}_{{\mc E \backslash \mc G_{\ell_0}(x+\ell)}}(\eta)\Big)^2 \nu_\rho(d\eta), \label{eq:lastbound}
\end{align}
where we used the Cauchy-Schwarz inequality and stationarity to get the last line. At this point we note that the sets $\Lambda_k$ introduced above have the following nice property: for any $x,y\in\Lambda_k$ such that $x\neq y$,  $|x-y| \geq 2\ell+\ell_0$, and by the imposed condition on the support of $\psi$, since $\psi$ is centered,
\[\int \tau_x \psi(\eta) \mathbf{1}_{{\mc E \backslash \mc G_{\ell_0}(x+\ell)}}(\eta)\; \tau_y\psi(\eta) \mathbf{1}_{{\mc E \backslash \mc G_{\ell_0}(y+\ell)}}(\eta)\; \nu_\rho(d\eta)=0.\]
Therefore, using the facts that: 1) $\ell_0\le \ell$ and 2) the functions $\tau_x\psi$ and $\mathbf{1}_{{\mc E \backslash \mc G_{\ell_0}(x+\ell)}}$ have disjoint supports, we can bound \eqref{eq:lastbound} by
\begin{multline*}
(2\ell+\ell_0) \; t^2 \; \sum_{k=0}^{2\ell+\ell_0-1} \sum_{x\in\Lambda_k} V^2(x)   \int  \Big(\tau_x\psi(\eta)\; \mathbf{1}_{{\mc E \backslash \mc G_{\ell_0}(x+\ell)}}(\eta)\Big)^2 \nu_\rho(d\eta)\\
 \le C\ell \; t^2 \; \sum_{x\in\bb Z} V^2(x)\mathrm{Var}(\psi) \nu_\rho\big({\mc E \backslash \mc G_{\ell_0}(x+\ell)}\big).
\end{multline*}
 Then, \eqref{eq:restriction} follows from  \eqref{eq:decaysigma}.
\end{proof}

\subsection{Computing and summing all the errors} Let us now explain the strategy of the proof: first, we focus on \eqref{eq:term1}, \eqref{eq:term2} and \eqref{eq:term4}. For these three terms, we apply directly Lemma \ref{lem:restriction} with the appropriate function $\psi$ and bound in each case $\mathrm{Var}(\psi)$ by $1$.  We note, however, that for each term we use a specific function $\psi$ whose support size is always bounded by $\ell$ and in all cases we choose to look at the mobile cluster in a box of  size $\ell_0$. 
 We obtain
\begin{align}
\eqref{eq:term1}&+\eqref{eq:term2}+\eqref{eq:term4}\leq Ct^2n\ell(1-\rho^2)^{\ell_0/2}\|v\|_{2,n}^2 \\
&+
\mathbb{E}^n_{{\rho}}\bigg[\Big(\int_{0}^t \sum_{x\in\mathbb{Z}}V(x)\;\bar{\eta}_{sn^{2}}(x)\big(\bar{\eta}_{sn^{2}}(x+1)-\vec{\eta}_{sn^{2}}^{\ell_0}(x)\big)\; \mathbf{1}_{\mc G_{\ell_0}(x+\ell_0)}(\eta_{sn^2})\; ds\Big)^2\bigg]\label{eq:term1restrict}\\
&+
\mathbb{E}_\rho^n\bigg[\Big(\int_0^t\sum_{x\in\mathbb{Z}} V(x)\; \vec\eta^{\ell_0}_{sn^2}(x) \big(\bar\eta_{sn^2}(x)-\vecleft\eta_{sn^2}^{\ell_0}(x)\big)\; \mathbf{1}_{\mc G_{\ell_0}(x-2\ell_0-1)}(\eta_{sn^2})\; ds\Big)^2\bigg] \label{eq:term2restrict}\\
&+
\mathbb{E}_\rho^n\bigg[\Big(\int_0^t \sum_{x\in\mathbb{Z}} V(x) \; \vec\eta^\ell_{sn^2}(x)\big(\vecleft\eta^{\ell_0}_{sn^2}(x)-\bar\eta_{sn^2}(x)\big)\; \mathbf{1}_{\mc G_{\ell_0}(x-2\ell_0-1)}(\eta_{sn^2})\; ds\Big)^2\bigg]\label{eq:term4restrict}
\end{align}
Concerning \eqref{eq:term3}, the classic strategy (explained in \cite[Section 4.2]{gj2014}) consists in doubling several times the size of the box, starting from $\ell_0$, until reaching the expected final size $\ell$ (Proposition \ref{doub box}). 
Fix $x \in \Z$, assume $\ell=2^M \ell_0$ and $\ell_{k+1}=2\ell_k$. The reader can easily check that
\begin{multline}
\vecleft{\eta}^{\ell_0}(x)\Bigl(\vec{\eta}^{\ell_0}(x)-\vec{\eta}^{\ell_M}(x)\Bigr)=
\sum_{k=0}^{M-1}\vecleft{\eta}^{\ell_k}(x)\Bigl(\vec{\eta}^{\ell_k}(x)-\vec{\eta}^{\ell_{k+1}}(x)\Bigr)\\
+\ \sum_{k=0}^{M-2}\vec{\eta}^{\ell_{k+1}}(x)\Bigl(\vecleft{\eta}^{\ell_k}(x)-\vecleft{\eta}^{\ell_{k+1}}(x)\Bigr)
+\vec{\eta}^{\ell_M}(x)\Bigl(\vecleft{\eta}^{\ell_{M-1}}(x)-\vecleft{\eta}^{\ell_{0}}(x)\Bigr).
\end{multline}
Again, this decomposition might seem overly involved and arbitrary. Let us just say that the constraints that govern this choice are: 1) the decomposition should involve doubling box sizes, 2) each term should be a product of centered functions with disjoint supports, 3) the variances of the multiplicative terms are $1/\ell_k$. These three properties will be crucial in the proof.

By an elementary inequality and using Minkowski's inequality twice, the above decomposition implies that \eqref{eq:term3} is bounded from above by 
\begin{align}
& 3\bigg\{\sum_{k=0}^{M-1} \bigg( \mathbb{E}_\rho^n\Big[\Big(\int_0^t \sum_{x\in\mathbb Z} V(x)\;\vecleft\eta^{\ell_k}_{sn^2}(x)\big(\vec\eta^{\ell_k}_{sn^2}(x)-\vec\eta^{\ell_{k+1}}_{sn^2}(x)\big)\: ds\Big)^2\Big]\bigg)^{1/2}\bigg\}^2 \label{eq:multi_sum1}\\
& +\ 3 \bigg\{\sum_{k=0}^{M-2} \bigg( \mathbb{E}_\rho^n\Big[\Big(\int_0^t \sum_{x\in\mathbb Z} V(x)\;\vec\eta^{\ell_{k+1}}_{sn^2}(x)\big(\vecleft\eta^{\ell_k}_{sn^2}(x)-\vecleft\eta^{\ell_{k+1}}_{sn^2}(x)\big)\: ds\Big)^2\Big]\bigg)^{1/2}\bigg\}^2 \label{eq:multi_sum2}\\
&+\ 3\mathbb{E}_\rho^n\bigg[\Big(\int_0^t \sum_{x\in\bb Z} V(x)\; \vec\eta_{sn^2}^\ell(x) \big(\vecleft\eta^{\ell_{M-1}}_{sn^2}(x)-\vecleft\eta^{\ell_0}_{sn^2}(x)\big)\; ds\Big)^2\bigg].\label{eq:multi_sum3}
\end{align}
As before, we use Lemma~\ref{lem:restriction}  and in all cases we look for the mobile cluster in a box of size $\ell_0$, to bound \eqref{eq:multi_sum1}+\eqref{eq:multi_sum2}+\eqref{eq:multi_sum3} by 
\begin{align}
&3\bigg\{ \sum_{k=0}^{M-1} \bigg(\E_\rho^n\bigg[  \Big(\int_0^t \sum_{x\in\bb Z} V(x)\; \vecleft\eta^{\ell_k}_{sn^2}(x)\big(\vec\eta^{\ell_k}_{sn^2}(x)-\vec\eta^{\ell_{k+1}}_{sn^2}(x)\big)\mathbf{1}_{\mc G_{\ell_0}(x+\ell_{k+1})}(\eta_{sn^2})\; ds\Big)^2\bigg]\bigg)^{1/2}\bigg\}^2\label{eq:lemrestr2}\\
&+3\ \bigg\{\sum_{k=0}^{M-2}\bigg(\E_\rho^n\bigg[  \Big(\int_0^t \sum_{x\in\bb Z} V(x)\; \vec\eta^{\ell_{k+1}}_{sn^2}(x)\big(\vecleft\eta^{\ell_k}_{sn^2}(x)-\vecleft\eta^{\ell_{k+1}}_{sn^2}(x) \big) \notag \\
& \qquad \qquad \qquad \qquad \qquad \qquad \qquad \qquad \quad \times \mathbf{1}_{\mc G_{\ell_0}(x-\ell_{k+1}-\ell_0-1)}(\eta_{sn^2})\; ds\Big)^2\bigg]\bigg)^{1/2}\bigg\}^2\label{eq:lemrestr3}\\
& + 3\mathbb{E}_\rho^n\bigg[\Big(\int_0^t \sum_{x\in\Z}V(x)\vec\eta_{sn^2}^\ell(x)\big(\vecleft\eta_{sn^2}^{\ell_{M-1}}(x)-\vecleft\eta_{sn^2}^{\ell_0}(x)\big)\; \mathbf{1}_{\mc G_{\ell_0}(x-\ell_{M-1}-\ell_0-1)}(\eta_{sn^2}) \;ds\Big)^2\bigg]\label{eq:lemrestr4}\\
&+ Ct^2n\ell(1-\rho^2)^{\ell_0/2}\|V\|^2_{2,n}.
\end{align}
 Putting together all the estimates above, we can bound the left-hand side of \eqref{eq:BGexpo}  by
\[
C t^2\; n\ell\; (1-\rho^2)^{\ell_0/2} \; \|V\|_{2,n}^2
\]
plus the sum of three main parts: first, the sum \eqref{eq:term1restrict}+\eqref{eq:term2restrict}+\eqref{eq:term4restrict}+\eqref{eq:lemrestr4}, which will be completely controlled via the one-block estimate in Section \ref{ssec:one-block}; second, the multi-scale part \eqref{eq:lemrestr2}+\eqref{eq:lemrestr3}, which we bound in Section \ref{sec:multi-scale}, and finally the last term \eqref{eq:term5}, which is investigated in Section \ref{sec:clever}.

\subsection{Path argument}
In this section, we explain how the presence of a good box (guaranteed by the previous section) helps to deal with the possible degeneracy of the rates, with the strategy presented in \cite{GLT}.

\begin{lemma}\label{path}
For any function $f \in {\bb L}^2(\nu_\rho)$, for any integers $\ell_0,\ell,n$, for any $y,z\in\{1,\cdots,\ell\}$, for any mean zero local function $\varphi:\mathcal{E}\to\mathbb{R}$  in $\mathbb L^2(\nu_\rho)$, such that $\varphi(\eta^{y,z})=\varphi(\eta)$ for all $\eta\in\mc E$
\begin{multline}\label{eq:path}
\bigg|2\int\sum_{x\in\Z}V(x)\tau_x\varphi(\eta)\big({\eta}(x+y)-\eta(x+z)\big)\mathbf{1}_{\mc G_{\ell_0}(x+\ell)}(\eta) f(\eta)\nu_\rho(d\eta)\bigg|
\\ \le C\frac{(\ell+\ell_0)^2}{n}\mathrm{Var}(\varphi)\|V\|_{2,n}^2+\mc D_n(f).
 \end{multline}
 Moreover, the same estimate holds when we replace above $\mathbf{1}_{\mc G_{\ell_0}(x+\ell)}(\eta)$ by 
$\mathbf{1}_{\mc G_{\ell_0}(x-\ell_0)}(\eta)$.
\end{lemma}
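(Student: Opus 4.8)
The goal is to transfer the weight living on configurations with a mobile cluster in the good box to the Dirichlet form, using the ``path argument'' of \cite{GLT}: since $\eta$ contains a mobile cluster in $\Lambda_{x+\ell}^{\ell_0}$ (for the first version of the estimate), there is an \emph{allowed path} $\gamma_x(\eta)$ — a sequence of nearest-neighbour exchanges, each with positive rate along the way — that transforms $\eta$ into $\eta^{x+y,x+z}$ while restoring all other occupation variables, and this path uses each bond at most a bounded number of times (six, for $m=2$). Writing $\Gamma = (\eta^{(0)}=\eta, \eta^{(1)}, \dots, \eta^{(L)}=\eta^{x+y,x+z})$ for the intermediate configurations along this path, the key telescoping identity is
\[
\varphi(\eta)\big(\eta(x+y)-\eta(x+z)\big) = \tfrac12\sum_{i=1}^{L}\big(f(\eta^{(i)})-f(\eta^{(i-1)})\big)\cdot(\text{something bounded}),
\]
but more precisely one writes the left-hand side of \eqref{eq:path}, after using $\varphi(\eta^{y,z})=\varphi(\eta)$ and a change of variables $\eta\mapsto\eta^{x+y,x+z}$, as a telescoping sum over the bonds of the allowed path of terms of the form $\tau_x\varphi(\eta^{(i-1)})\big(f(\eta^{(i-1)})-f(\eta^{(i)})\big)$ times a factor $\le 1$. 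The first step, then, is to set up this telescoping carefully and to record that each increment $f(\eta^{(i-1)})-f(\eta^{(i)})$ corresponds to an actual allowed nearest-neighbour exchange, so that the indicator $\mathbf 1_{\mc G_{\ell_0}(x+\ell)}(\eta)$ guarantees the constraint $c_{\cdot,\cdot+1}$ is satisfied at that step.

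\textbf{Main estimates.} Once the telescoping is in place, I would apply Young's inequality $2ab \le A a^2 + A^{-1} b^2$ to each increment term, with $a = f(\eta^{(i-1)})-f(\eta^{(i)})$ and $b$ collecting $V(x)\,\tau_x\varphi(\eta^{(i-1)})$ and the bounded factor, choosing the weight $A$ proportional to $n^{-1}$ so that the sum of the $A a^2$ terms reconstructs (a constant multiple of) $\mc D_n(f)$ — here one crucially uses that each bond is traversed a bounded number of times, so that summing over $x$ and over the steps of all the paths, each bond $\{w,w+1\}$ appears only $O(1)\cdot(\ell+\ell_0)$ times (the factor $\ell+\ell_0$ because paths started from different $x$ within a window of width $\le \ell+\ell_0$ can use the same bond), which is what produces the $\tfrac{(\ell+\ell_0)^2}{n}$ in the final bound after optimizing $A$. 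The complementary terms $A^{-1}b^2$ sum, using $|\tau_x\varphi|$ controlled in $\bb L^2(\nu_\rho)$ by $\mathrm{Var}(\varphi)$ (after recentering; $\varphi$ is mean zero) and $\sum_x V^2(x) = n\|V\|_{2,n}^2$, to $C\,A^{-1}\cdot(\ell+\ell_0)\cdot n\|V\|_{2,n}^2\,\mathrm{Var}(\varphi)$; balancing $A \asymp (\ell+\ell_0)/n$ against this gives $C\frac{(\ell+\ell_0)^2}{n}\mathrm{Var}(\varphi)\|V\|_{2,n}^2 + \mc D_n(f)$, as claimed. The ``moreover'' statement with $\mathbf 1_{\mc G_{\ell_0}(x-\ell_0)}(\eta)$ is identical: one simply uses an allowed path that fetches the mobile cluster from the box to the \emph{left} of the support of $\tau_x\varphi$ (cf. Remark~\ref{rem:restric}), with exactly the same combinatorics.

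\textbf{Main obstacle.} The delicate point is the bookkeeping in the Dirichlet-form reconstruction: one must make sure that the change of variables pushing $\eta$ along the allowed path produces genuine terms of the form $c_{w,w+1}(\eta')\big(f(\eta')-f((\eta')^{w,w+1})\big)^2$ appearing in $I_{w,w+1}(f)$, with the correct Radon–Nikodym factor from $\nu_\rho$ (which is bounded above and below since $\rho\in(0,1)$ and $\nu_\rho$ is Bernoulli, hence harmless), and that no bond is overcounted beyond the $O(1)$ multiplicity guaranteed by the path construction, uniformly in $\eta$. Equivalently, one needs the allowed path $\gamma_x(\eta)$ to be chosen measurably in $\eta$ and with the stated bounded bond-multiplicity; this is exactly the content of the mobile-cluster mechanism recalled in Section~\ref{sec:blocked} and established in \cite{GLT}, so the proof ultimately reduces to invoking that construction and carrying out the Young-inequality accounting above. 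A secondary nuisance is that the intermediate configurations $\eta^{(i)}$ are not distributed as $\nu_\rho$ after the change of variables within a single exchange step, but since each step is a transposition and $\nu_\rho$ is exchangeable, the cost is again only a bounded multiplicative constant.
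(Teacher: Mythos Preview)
Your proposal follows exactly the same route as the paper: symmetrize via $\eta\mapsto\eta^{x+y,x+z}$ to produce $f(\eta)-f(\eta^{x+y,x+z})$, telescope along an allowed path furnished by the mobile cluster in the good box, apply Young's inequality to each step, and reconstruct the Dirichlet form using the bounded bond-multiplicity of the path. Two small corrections: (i) in the telescoping the factor should be $\tau_x\varphi(\eta)$ (evaluated at the original configuration), not $\tau_x\varphi(\eta^{(i-1)})$; (ii) your stated scaling $A\asymp(\ell+\ell_0)/n$ is off --- carrying out your own accounting (the $Aa^2$ side yields $\sim A(\ell+\ell_0)n^{-2}\mc D_n(f)$ and the $A^{-1}b^2$ side yields $\sim A^{-1}(\ell+\ell_0)n\|V\|_{2,n}^2\mathrm{Var}(\varphi)$) forces $A\asymp n^2/(\ell+\ell_0)$, which is the paper's choice $A_x=|V(x)|(\ell+\ell_0)/(2Cn^2)$ written in the reciprocal convention and made $x$-dependent. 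The paper also makes your ``measurable choice of path'' concrete by conditioning on the position $X(\eta)$ of the first mobile cluster in $\Lambda_{x+\ell}^{\ell_0}$, so that on each event $\{X(\eta)=x'\}$ the path is deterministic; this is exactly what your measurability remark needs, and since $\nu_\rho$ is exchangeable the Radon--Nikodym factor you worry about is in fact equal to $1$.
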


\begin{proof}
The second part of the lemma follows by a simple space symmetry once we prove \eqref{eq:path}.
We first write the quantity we want to bound as twice its half, and perform the change of variables $\eta\mapsto\eta^{x+y,x+z}$ in one of them. Since $x+y$ and $x+z$ do not belong to the support of  $\mathbf{1}_{\mc G_{\ell_0}(x+\ell) }$ and since $\tau_x\varphi(\eta^{x+y,x+z})=\tau_x\varphi(\eta)$, we obtain that the left-hand side of \eqref{eq:path} is bounded by
\[
\sum_{x\in\Z}\big|V(x){\big| \; \bigg|}  \int\tau_x\varphi(\eta)\mathbf{1}_{\mc G_{\ell_0}(x+\ell)}(\eta)(\eta(x+y)-\eta(x+z))\big[f(\eta)-f(\eta^{x+y,x+z})\bigr]\nu_\rho(d\eta){\bigg|}.
\]
Now, {when $V(x)\neq 0$,} we use the fact that $\eta\in\mc G_{\ell_0}(x+\ell)$. This implies that there exists at least one mobile cluster inside the box $\{x+\ell+1,...,x+\ell+\ell_0\}$, and therefore there exists a sequence of legal moves inside the box $\{x+1,...,x+\ell+\ell_0\}$ that allows to exchange the values $\eta(x+z),\eta(x+y)$, \emph{i.e.\@} to take a particle from the site $x+z$ to the site $x+y$ or vice-versa. We observe that if we had not conditioned on the good set of configurations, since the dynamics is degenerate, we would not have any guarantee that this exchange could be done: if, for example, all sites at distance two or less from $x+z$ are empty, then a particle at $x+z$ has zero rate for jumping. {This kind of \emph{path argument} was already used in \cite{GLT} in order to estimate the Dirichlet form.}

{For $\eta\in\mc G_{\ell_0}(x+\ell)$, let $X(\eta)$ be the position of the first mobile cluster in $\Lambda_{x+\ell}^{\ell_0}$, \emph{i.e.\@} $X(\eta)=\min\big\lbrace x'\in\Lambda_{x+\ell}^{\ell_0}\colon \eta(x')\eta(x'+1)+\eta(x')\eta(x'+2)\geq 1\big\rbrace$. For all configurations such that $X(\eta)=x'$, there exist $N(x')\in\mathbb{N}$ and a sequence $\{x^{(i)}\}_{i=0,\ldots,N(x')}$ which takes values in $\{x+1,...,x+\ell+\ell_0\}$ ,  such that  \[\eta^{(0)}=\eta,\;   \eta^{(i+1)}=\bigl(\eta^{(i)}\bigr)^{x^{(i)},x^{(i)}+1},\]
 and $\eta^{(N(x'))}=\eta^{x+y,x+z},\;$ and $c_{x^{(i)},x^{(i)}+1}(\eta^{(i)})>0$ for any $i\in\{0,\ldots,N(x')\}$. Moreover, we can choose the above sequence in such a way that $N(x')\le C(\ell+\ell_0)$ and the sites $x^{(i)}$ do not take more than six times the same value. In order to construct the sequence (see Figure~\ref{fig:path} below), if for instance $z>y$, bring the mobile cluster to $x+z+1$, then bring the group constituted of the mobile cluster plus the hole/particle initially at $x+z$ to $x+y+1$, exchange the variable at $x+y$ and retrace the steps back.
 
 \begin{figure}
 \begin{center}
 \includegraphics[scale=.4]{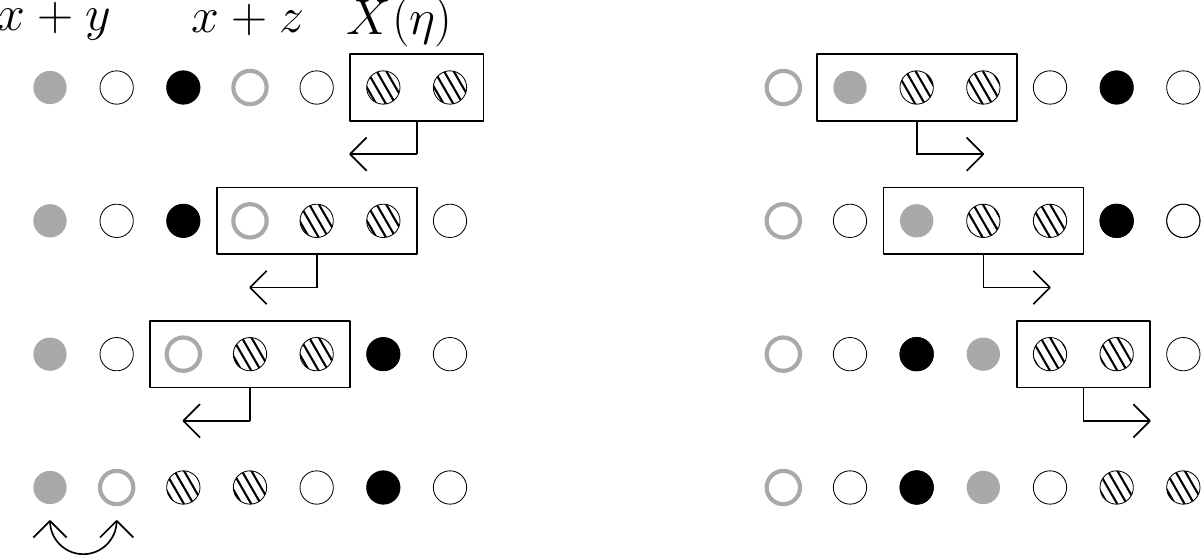}
 \caption{{How to construct the wanted path for the case $\eta(X(\eta))\eta(X(\eta)+1)=1$ (in the other case, start by making the particle at $X(\eta)+2$ jump left). The hole and particle that we want to exchange are in gray, the mobile cluster in stripes and the rest of the configuration in black. By exchanging $X(\eta)-1,X(\eta)$, then $X(\eta),X(\eta)+1$ we move the mobile cluster one step to the left. Similarly, we move the group \{gray hole, striped particles\} to the left until it touches the gray particle. Exchange the gray hole and particle and retrace the steps back.}}
 \label{fig:path}
 \end{center}
 \end{figure}

 Therefore, the quantity \[\int\tau_x\varphi(\eta)({\eta}(x+y)-{\eta}(x+z))\mathbf{1}_{\mc G_{\ell_0}(x+\ell)}(\eta)\;\bigl[ f(\eta)-f(\eta^{x+y,x+z})\bigr]\nu_\rho(d\eta)\] can be rewritten as
\[\sum_{x'\in\Lambda_{x+\ell}^{\ell_0}}\int\tau_x\varphi(\eta)({\eta}(x+y)-{\eta}(x+z))\mathbf{1}_{\{X(\eta)=x'\}}\;\sum_{i=1}^{N(x')}\bigl(f(\eta^{(i-1)})-f(\eta^{(i)})\bigr)\nu_\rho(d\eta).
\]
By Young's inequality, for any $A_x>0$, the last integral is bounded by
\begin{equation}\label{eq:youngineq}
\begin{split}
&\frac{A_x}{2}\sum_{x'\in\Lambda_{x+\ell}^{\ell_0}}\int\sum_{i=1}^{N(x')}(\tau_x\varphi(\eta))^2\frac{({\eta}(x+z)-{\eta}(x+y))^2}{\tau_{x^{(i-1)}}c(\eta^{(i-1)})} \;\mathbf{1}_{X(\eta)=x'} \; \nu_\rho(d\eta)\\
+&\frac{1}{2A_x}\sum_{x'\in\Lambda_{x+\ell}^{\ell_0}}\int\sum_{i=1}^{N(x')}\tau_{x^{(i-1)}}c(\eta^{(i-1)})\mathbf{1}_{\{X(\eta)=x'\}} \big(f(\eta^{(i-1)})-f(\eta^{(i)})\big)^2\;  \nu_\rho(d\eta).
\end{split}
\end{equation}
Note that by definition of the path, for every $i\in\{1,\ldots, N(x')\}$ we have  $ \tau_{x^{(i-1)}}c(\eta^{(i-1)})\ge 1$, so that we can bound the first term by 
\[
CA_x\; (\ell+\ell_0)\int(\tau_x\varphi(\eta))^2{(\bar{\eta}(x+z)-\bar{\eta}(x+y))^2}\; \mathbf{1}_{\mc G_{\ell_0}(x+\ell)}(\eta) \nu_\rho(d\eta)\le C{A_x}\;(\ell+\ell_0) \mathrm{Var}(\varphi).
\]
Since the sequence $\{x^{(i)}\}_i$ takes at most six times the same value, by the  invariance of $\nu_\rho$ under exchanges and because for $X(\eta)=x'$ fixed we construct a deterministic path $(\eta^{(i)})_i$,} we can bound the second term in \eqref{eq:youngineq} by 
\begin{align*}
 \frac{C}{A_x}\int\sum_{k=x+1}^{x+\ell+\ell_0-1}\tau_{k}c(\eta) \big(f(\eta^{k,k+1})-f(\eta)\big)^2\;  \nu_\rho(d\eta)=\frac{C}{A_x}\sum_{k=x+1}^{x+\ell+\ell_0-1} I_{k,k+1}(f).
\end{align*}
Setting $A_x={\big|}V(x){\big|}(\ell+\ell_0)/(2C{n^2})$ {when $V(x)\neq 0$}, we get the desired result.
\end{proof}

\subsection{One-block estimate}
\label{ssec:one-block}

This section aims to treat four terms: \eqref{eq:term1restrict}, \eqref{eq:term2restrict}, \eqref{eq:term4restrict} and \eqref{eq:lemrestr4}. We write the statement and its proof only for \eqref{eq:term1restrict}, and we let the reader adapt the argument to the other cases.

\begin{proposition} [One-block estimate]
\label{prop:one-block} 
Let $\ell_0,\ell\in\bb {N}$ and let $\varphi:\mathcal{E}\to\mathbb{R}$ be a mean zero local function in $\mathbb{L}^2(\nu_\rho)$ whose support does not intersect the set of points $\{1,\dots, \ell\}$. Then for any $t>0$, $n\in\N$ and $y\in\{1,\cdots,\ell\}$:
\begin{multline} \label{eq:lemone}
\E_\rho^n\bigg[  \Big(\int_0^t   \sum_{x\in\bb Z} V(x)\;\tau_x\varphi(\eta_{sn^2})\big(\bar\eta_{sn^2}(x+y)-\vec\eta_{sn^2}^{\ell}(x)\big)\; \mathbf{1}_{\mc G_{\ell_0}(x+\ell)}(\eta_{sn^2})\; ds\Big)^2\bigg] \\ \le
Ct\frac{(\ell+\ell_0)^2}{n} \mathrm{Var}(\varphi)\|V\|_{2,n}^2.
\end{multline}
\end{proposition}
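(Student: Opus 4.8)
The plan is to reduce the one-block estimate to the path argument of Lemma~\ref{path} via a standard Feynman--Kac / variational bound. First I would use the inequality
\[
\E_\rho^n\bigg[\Big(\int_0^t \Phi(\eta_{sn^2})\, ds\Big)^2\bigg] \le C t \int_0^t \E_\rho^n\big[\Phi(\eta_{sn^2})\, \tau_{sn^2}\Phi\cdots\big]\, ds
\]
no --- more precisely, the classical bound: for a mean-zero (in $\bb L^2(\nu_\rho)$) observable $\Phi$,
\[
\E_\rho^n\bigg[\Big(\int_0^t \Phi(\eta_{sn^2})\, ds\Big)^2\bigg] \le C t \sup_{f}\bigg\{ 2\int \Phi(\eta) f(\eta)\, \nu_\rho(d\eta) - \mathcal D_n(f)\bigg\},
\]
where the supremum runs over densities $f\in\bb L^2(\nu_\rho)$ (this follows from the spectral representation of the symmetric part of the generator and the fact that $n^2\mathcal L_n$ has symmetric part with Dirichlet form $\mathcal D_n$, exactly as in \cite{gj2014,GS}). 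Here the relevant observable is
\[
\Phi(\eta) = \sum_{x\in\bb Z} V(x)\, \tau_x\varphi(\eta)\big(\bar\eta(x+y)-\vec\eta^{\ell}(x)\big)\, \mathbf{1}_{\mc G_{\ell_0}(x+\ell)}(\eta),
\]
which indeed has mean zero under $\nu_\rho$ because, conditionally on the variables in the support of $\tau_x\varphi$ and of $\mathbf{1}_{\mc G_{\ell_0}(x+\ell)}$ (these supports being disjoint from $\{x+1,\dots,x+\ell\}$ when combined with the hypothesis on $\varphi$), the factor $\bar\eta(x+y)-\vec\eta^{\ell}(x)$ averages to zero.

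Next I would rewrite the empirical average as a telescoping sum of nearest-neighbor exchanges: since
\[
\bar\eta(x+y) - \vec\eta^{\ell}(x) = \frac{1}{\ell}\sum_{z=x+1}^{x+\ell}\big(\bar\eta(x+y) - \bar\eta(z)\big) = \frac{1}{\ell}\sum_{z=x+1}^{x+\ell}\big(\eta(x+y) - \eta(z)\big),
\]
the term $2\int \Phi f\, d\nu_\rho$ becomes a sum over $x$ and over $z\in\{x+1,\dots,x+\ell\}$ of expressions of exactly the form appearing on the left-hand side of \eqref{eq:path} in Lemma~\ref{path}, with the local function $\varphi$ there being $\tau_x\varphi$ (note $\varphi(\eta^{y,z})=\varphi(\eta)$ holds because the support of $\varphi$ avoids $\{1,\dots,\ell\}\supseteq\{y,z\}$, after shifting), with the pair of sites being $(x+y,z)$ both lying in $\{x+1,\dots,x+\ell\}$, and with $V$ replaced by $V/\ell$. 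Applying Lemma~\ref{path} to each such term and summing, the Dirichlet-form contributions add up: since each exchange bond $\{k,k+1\}$ is used a bounded number of times across the telescoping (a factor absorbed into $C$, and the crucial point is that the paths for different $x$ with $V(x)\ne 0$ can be chosen to reuse bonds only boundedly often, which is ensured by the box decomposition as in Lemma~\ref{lem:restriction}), one gets $\sum_x \mathcal D_n(f_x) \le \mathcal D_n(f)$ up to constants, or one simply chooses the Young-inequality parameter to make the total Dirichlet contribution at most $\mathcal D_n(f)$. The remaining term is bounded by
\[
\sum_{x\in\bb Z} \frac{1}{\ell}\sum_{z=x+1}^{x+\ell} C\frac{(\ell+\ell_0)^2}{n}\, \frac{|V(x)|^2}{?}\, \mathrm{Var}(\varphi) \cdots,
\]
and after keeping track of the $1/\ell$ normalization — which combines with the $\ell$ terms in the sum over $z$ to give back a clean $\|V\|_{2,n}^2$ — one obtains the claimed bound $Ct\,(\ell+\ell_0)^2 n^{-1}\,\mathrm{Var}(\varphi)\,\|V\|_{2,n}^2$.

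The main obstacle, and the step requiring the most care, is the bookkeeping of how often each bond $\{k,k+1\}$ is used when all the paths (over different $x$, and over different $z$ in the telescoping) are run simultaneously against the \emph{same} trial density $f$; one must organize the sum over $x$ into the $O(\ell+\ell_0)$ residue classes modulo $2\ell+\ell_0$ (exactly as in the proof of Lemma~\ref{lem:restriction}) so that, within a class, the supports and the paths used are genuinely disjoint, and then pay a factor $(\ell+\ell_0)$ from the Cauchy--Schwarz over classes — this is precisely the origin of the $(\ell+\ell_0)^2$ rather than $(\ell+\ell_0)$ in the final bound. A secondary technical point is verifying the mean-zero property of $\Phi$ carefully enough to justify the Feynman--Kac variational inequality, and checking that the hypothesis ``support of $\varphi$ does not intersect $\{1,\dots,\ell\}$'' is exactly what makes both the telescoping sites $x+y, z$ and the good-box indicator sit outside the support of $\tau_x\varphi$, so that Lemma~\ref{path} applies verbatim.
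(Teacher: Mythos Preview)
Your approach is the same as the paper's: apply the Kipnis--Varadhan variational bound \cite[Lemma~2.4]{KLO}, write $\bar\eta(x+y)-\vec\eta^{\ell}(x)=\ell^{-1}\sum_{z=1}^{\ell}(\eta(x+y)-\eta(x+z))$, and invoke Lemma~\ref{path}. That is literally the whole proof in the paper.

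The ``main obstacle'' you describe does not exist. Reread the statement of Lemma~\ref{path}: the sum $\sum_{x\in\Z}V(x)$ is already inside the integral on the left-hand side, and the right-hand side already carries the factor $(\ell+\ell_0)^2$ together with a \emph{single} copy of $\mathcal D_n(f)$. All the bond-sharing bookkeeping across different $x$ is absorbed in the proof of that lemma (via the choice of the Young parameter $A_x\propto |V(x)|(\ell+\ell_0)/n^2$); it is not something you redo here, and it does not involve any residue-class decomposition or Cauchy--Schwarz over classes. The $(\ell+\ell_0)^2$ arises inside Lemma~\ref{path} as (path length)$\times$(choice of $A_x$), not from a convexity inequality over $2\ell+\ell_0$ classes. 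For the remaining sum over $z$, apply Lemma~\ref{path} with the \emph{original} $V$ (not $V/\ell$) for each fixed $z$: the prefactor $1/\ell$ then averages the $\ell$ copies of the right-hand side of \eqref{eq:path} back to $C(\ell+\ell_0)^2 n^{-1}\mathrm{Var}(\varphi)\|V\|_{2,n}^2+\mathcal D_n(f)$, and the variational supremum finishes the proof.
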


\begin{proof}
By  \cite[Lemma 2.4]{KLO}, we can bound the expectation on the left-hand side of \eqref{eq:lemone} from above by
\[
Ct\sup_{f\in {\bb L}^2(\nu_\rho)}\bigg\{
2\int\sum_{x\in\bb Z}V(x)\tau_x\varphi(\eta)\big(\bar\eta(x+y)-\vec\eta^{\ell}(x)  \big) \mathbf{1}_{\mc G_{\ell_0}(x+\ell)}(\eta) f(\eta)\nu_\rho(d\eta)-\mc D_n(f)\bigg\},
\]
where $\mc D_n(f)$ has been defined in \eqref{eq:dirichlet}. 
Writing \[\bar{\eta}(x+y)-\vec\eta^{\ell}(x)=\frac{1}{\ell}\sum_{z=1}^{\ell}(\eta(x+y)-\eta(x+z))\] and using Lemma~\ref{path}, we get the result.
\end{proof}

\subsection{Estimate of \eqref{eq:lemrestr2}+\eqref{eq:lemrestr3}: multi-scale analysis}
\label{sec:multi-scale}

Now, let us double the size of the box, and consider separately each term in \eqref{eq:lemrestr2} and \eqref{eq:lemrestr3}, for fixed $k\in\{0,...,M-1\}$. Recall that by definition $\ell_k=2^k \ell_0$. 
%
%

\begin{proposition}[Doubling the box]\label{doub box}
For any $t>0$ and $n\in \N$:
\begin{align*}
\mathbb{E}^n_{\rho}\bigg[\Big(\int_{0}^t \sum_{x\in\Z}V(x)\vecleft{\eta}^{\ell_k}_{sn^2}(x)\big(\vec{\eta}_{sn^2}^{\ell_{k}}(x)-\vec{\eta}_{sn^2}^{\ell_{k+1}}(x)\big)\; &  \mathbf{1}_{\mc G_{\ell_0}(x+\ell_{k+1})}(\eta_{sn^2}) \; ds \Big)^2\bigg] \\
  & \le
Ct\; \frac{\ell_k}{n}\|V\|_{2,n}^2,\\
\E_\rho^n\bigg[  \Big(\int_0^t \sum_{x\in\bb Z} V(x)\; \vec\eta^{\ell_{k+1}}_{sn^2}(x)\big(\vecleft\eta^{\ell_k}_{sn^2}(x)-\vecleft\eta^{\ell_{k+1}}_{sn^2}(x) \big)\; & \mathbf{1}_{\mc G_{\ell_0}(x-\ell_{k+1}-\ell_0-1)}(\eta_{sn^2})\; ds\Big)^2\bigg] \\ & \le
Ct\; \frac{\ell_k}{n}\|V\|_{2,n}^2.
\end{align*}
\end{proposition}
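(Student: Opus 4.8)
\textbf{Proof proposal for Proposition \ref{doub box}.} The plan is to reduce each of the two expectations to a static variational problem via the Kipnis--Varadhan-type inequality \cite[Lemma 2.4]{KLO}, exactly as in the proof of Proposition \ref{prop:one-block}, and then to apply the path argument of Lemma \ref{path} with a carefully chosen multiplicative function $\varphi$. Concentrating on the first estimate, I would first note that $\vecleft\eta^{\ell_k}(x)$ is a mean-zero local function whose support $\{x-\ell_k,\dots,x-1\}$ is disjoint from the box $\Lambda_{x+\ell_{k+1}}^{\ell_0}=\{x+\ell_{k+1}+1,\dots,x+\ell_{k+1}+\ell_0\}$ appearing in the good-set indicator, and is invariant under any exchange $\eta^{y,z}$ with $y,z\in\{1,\dots,\ell_{k+1}\}$ (since those sites lie strictly to the right of the support). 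Hence the role of $\varphi$ in Lemma \ref{path} is played by $\tau_{-x}\big(\vecleft\eta^{\ell_k}(x)\big)$, which has variance $\mathrm{Var}(\varphi)=\chi(\rho)/\ell_k\le C/\ell_k$.

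Next I would rewrite the difference of empirical averages as a telescoping average of elementary differences of occupation variables. Since $\ell_{k+1}=2\ell_k$, one has the algebraic identity
\[
\vec\eta^{\ell_k}(x)-\vec\eta^{\ell_{k+1}}(x)=\frac{1}{\ell_{k+1}}\sum_{y=x+1}^{x+\ell_k}\sum_{z=x+\ell_k+1}^{x+\ell_{k+1}}\frac{1}{\ell_k}\big(\bar\eta(y)-\bar\eta(z)\big),
\]
so that $\vec\eta^{\ell_k}(x)-\vec\eta^{\ell_{k+1}}(x)$ is a convex-type combination (with coefficients summing in absolute value to $1$) of terms $\bar\eta(x+y')-\bar\eta(x+z')$ with $y',z'\in\{1,\dots,\ell_{k+1}\}$. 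Applying \cite[Lemma 2.4]{KLO} and then, for each pair $(y',z')$, estimate \eqref{eq:path} of Lemma \ref{path} with $\ell$ there equal to $\ell_{k+1}$ and the starting box size $\ell_0$, and summing, the resulting bound is
\[
Ct\,\frac{(\ell_{k+1}+\ell_0)^2}{n}\,\mathrm{Var}(\varphi)\,\|V\|_{2,n}^2\le Ct\,\frac{\ell_k^2}{n}\cdot\frac{1}{\ell_k}\,\|V\|_{2,n}^2=Ct\,\frac{\ell_k}{n}\|V\|_{2,n}^2,
\]
where I used $\ell_0\le\ell_k$ and $\ell_{k+1}=2\ell_k$ to absorb constants. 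This is precisely the claimed bound. The second estimate is handled identically: there the multiplicative function is $\tau_{-x}\big(\vec\eta^{\ell_{k+1}}(x)\big)$, whose support $\{x+1,\dots,x+\ell_{k+1}\}$ is disjoint from the box $\{x-\ell_{k+1}-\ell_0,\dots,x-\ell_{k+1}-1\}$ in the good-set indicator, which is invariant under exchanges in that left box, and which has variance $\chi(\rho)/\ell_{k+1}\le C/\ell_k$; one writes $\vecleft\eta^{\ell_k}(x)-\vecleft\eta^{\ell_{k+1}}(x)$ as the analogous telescoping sum of differences $\bar\eta(x-y')-\bar\eta(x-z')$ and invokes the ``moreover'' part of Lemma \ref{path} (the version with $\mathbf{1}_{\mc G_{\ell_0}(x-\ell_0)}$, here shifted to $\mathbf{1}_{\mc G_{\ell_0}(x-\ell_{k+1}-\ell_0-1)}$).

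The main point requiring care—the step I expect to be the genuine obstacle—is the bookkeeping ensuring that in every application of Lemma \ref{path} the three structural hypotheses are truly met: that the support of $\varphi$ is disjoint from the relevant box of size $\ell_0$ on which the mobile cluster is sought, that $\varphi$ is invariant under the exchanges $\eta^{y',z'}$ actually used, and that the sites $x+y',x+z'$ whose values are being exchanged do not belong to the support of the good-set indicator (so the change of variables in the proof of Lemma \ref{path} leaves the indicator unchanged). With the decomposition of \eqref{eq:term3} chosen so that in \eqref{eq:lemrestr2} the good box sits to the \emph{right} of the doubling boxes while the multiplier $\vecleft\eta^{\ell_k}$ sits to the \emph{left}, and symmetrically in \eqref{eq:lemrestr3}, all three conditions hold, and the factor $\mathrm{Var}(\varphi)=O(1/\ell_k)$ is exactly what converts the geometric factor $(\ell_{k+1}+\ell_0)^2/n$ into the summable (in $k$) quantity $\ell_k/n$ needed downstream.
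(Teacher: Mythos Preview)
Your proposal is correct and follows essentially the same route as the paper: apply \cite[Lemma 2.4]{KLO} to pass to the variational problem, decompose the difference of empirical averages into elementary differences $\bar\eta(y)-\bar\eta(z)$, and invoke Lemma~\ref{path} with $\varphi$ the left (resp.\ right) empirical average, whose variance $O(1/\ell_k)$ cancels one factor of $\ell_k$ from the geometric term $(\ell_{k+1}+\ell_0)^2/n$. The only cosmetic difference is that the paper uses the simpler single-sum identity $\vec\eta^{\ell_k}(x)-\vec\eta^{\ell_{k+1}}(x)=\frac{1}{\ell_{k+1}}\sum_{y=x+1}^{x+\ell_k}(\bar\eta(y)-\bar\eta(y+\ell_k))$ rather than your double sum, and leaves implicit the support/invariance checks for $\varphi$ that you spell out carefully.
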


\begin{proof}
We present the proof for the first expectation but we note that by symmetry the same arguments applies to the second one.
As above, by \cite[Lemma 2.4]{KLO}  we can bound the first expectation above  by
\begin{equation*}
Ct\sup_{f\in {\bb L}^2(\nu_\rho)}\bigg\{\int \sum_{x\in\Z} V(x)\vecleft{\eta}^{\ell_k}(x)\big(\vec{\eta}^{\ell_{k}}(x)-\vec{\eta}^{\ell_{k+1}}(x)\big)\mathbf{1}_{\mc G_{\ell_0}(x+\ell_{k+1})}(\eta){f(\eta)\nu_\rho(d\eta)}-\mc D_n(f)\bigg\}.
\end{equation*}
 We note that, since $\ell_{k+1}=2\ell_k$ we have
 \begin{equation*}
\vec{\eta}^{\ell_{k}}(x)-\vec{\eta}^{\ell_{k+1}}(x)=\frac{1}{\ell_{k+1}}\sum_{y=x+1}^{x+\ell_{k}}\big(\bar{\eta}(y)-\bar{\eta}(y+\ell_{k})\big),
\end{equation*}
and $\mathrm{Var}\big(\vecleft{\eta}^{\ell_k}(x)\big)=\ell_k^{-1}$. Then we get the result from Lemma~\ref{path}.

\end{proof}

Finally, we show that we can reach the box of size $\ell \geq \ell_0$: if $\ell=2^M\ell_0$, then all we have to do is sum over $k$ (recall \eqref{eq:lemrestr2} and \eqref{eq:lemrestr3}). Otherwise, we choose $M$ big enough so that $2^M\ell_0 \le \ell \le 2^{M+1}\ell_0$. We let the reader check that the error obtained after performing the summation is given by
\[
Ct\; \frac{\ell}{n}\; \|V\|_{2,n}^2.
\]

\subsection{Estimate of \eqref{eq:term5}: clever decomposition}
\label{sec:clever}
To control the last term \eqref{eq:term5} we use an {elementary} inequality to bound it from above by 
\begin{align}\label{eq:last_term}
& 2\mathbb{E}^n_{\rho}\bigg[\Big(\int_{0}^t  \sum_{x\in\mathbb{Z}}V(x)\Big\{\vec{\eta}_{sn^2}^{\ell}(x)\big(\bar{\eta}_{sn^2}(x)-\vec{\eta}_{sn^2}^{\ell}(x)\big)+\frac{\bigl(\bar{\eta}_{sn^2}(x)-\bar{\eta}_{sn^2}(x+1)\bigr)^2}{2\ell}\Big\}ds\Big)^2\bigg]\\
& + 2\mathbb{E}^n_{\rho}\bigg[\Big(\int_{0}^t  \sum_{x\in\mathbb{Z}}V(x)
\Big\{\frac{(\bar{\eta}_{sn^2}(x)-\bar{\eta}_{sn^2}(x+1))^2}{2\ell}-\frac{\chi(\rho)}{\ell}\Big\}ds\Big)^2\bigg] .
\end{align}
It is trivial to check that, by the Cauchy-Schwarz inequality, the last expectation can be bounded from above by \[
 C(\rho) t^2 \frac{n}{\ell^2}\; \|V\|_{2,n}^2.\]
It remains now to bound  \eqref{eq:last_term}.
Let us first write that for any $x\in\mathbb{Z}$ 
\begin{multline}
\vec{\eta}^{\ell}(x)\big(\bar{\eta}(x)-\vec{\eta}^{\ell}(x)\big)+\frac{(\bar{\eta}(x)-\bar{\eta}(x+1))^2}{2\ell}=\frac{1}{\ell}\sum_{y=x+1}^{x+\ell}\vec{\eta}^\ell(x)\bigl(\bar{\eta}(x+1)-\bar{\eta}(y)\bigr)\\+\frac{\ell-1}{\ell}\vec{\eta}^{\ell-1}(x+1)\bigl(\bar{\eta}(x)-\bar{\eta}(x+1)\bigr)+\frac{\bar{\eta}(x)^2-\bar{\eta}(x+1)^2}{2\ell}.
\end{multline}
As before, we check easily that, by Cauchy-Schwarz inequality, 
\begin{equation*}
\mathbb{E}^n_{\rho}\bigg[\Big(\int_{0}^t  \sum_{x\in\mathbb{Z}}V(x)
\Big\{\frac{\big(\bar{\eta}_{sn^2}(x)\big)^2-\big(\bar{\eta}_{sn^2}(x+1)\big)^2}{2\ell}\Big\}ds\Big)^2\bigg] \leq C(\rho) t^2 \frac{n}{\ell^2}\; \|V\|_{2,n}^2.
\end{equation*}
Therefore, by applying Lemma~\ref{lem:restriction}, we can bound \eqref{eq:last_term} by
\begin{align}
& 6\mathbb{E}^n_{\rho}\bigg[\Big(\int_{0}^t  \sum_{x\in\mathbb{Z}}V(x)\frac{1}{\ell}\sum_{y=x+1}^{x+\ell}\Big(\vec{\eta}_{sn^2}^{\ell}(x)\big(\bar{\eta}_{sn^2}(x+1)-\bar{\eta}_{sn^2}(y)\big)\Big) \mathbf{1}_{\mc G_{\ell_0}(x-\ell_0-1)}(\eta_{sn^2}) ds\Big)^2\bigg]\label{eq:clever1}\\
&+6\mathbb{E}^n_{\rho}\bigg[\Big(\int_{0}^t  \sum_{x\in\mathbb{Z}}V(x)\frac{\ell-1}{\ell}\Big(\vec{\eta}_{sn^2}^{\ell-1}(x+1)\big(\bar{\eta}_{sn^2}(x)-\bar{\eta}_{sn^2}(x+1)\big)\Big) \mathbf{1}_{\mc G_{\ell_0}(x-\ell_0-1)}(\eta_{sn^2}) ds\Big)^2\bigg]\label{eq:clever2}\\
&+Ct^2n\ell(1-\rho^2)^{\ell_0/2}\; \|V\|_{2,n}^2+C(\rho) t^2 \frac{n}{\ell^2}\; \|V\|_{2,n}^2. \notag
\end{align}
Now, to bound the first two terms  \eqref{eq:clever1} and \eqref{eq:clever2}, we apply twice \cite[Lemma 2.4]{KLO} to get
\begin{multline*}
Ct\sup_{f\in {\bb L}^2(\nu_\rho)}\bigg\{\int \sum_{x\in\Z} V(x)\frac{1}{\ell}\sum_{y=x+1}^{x+\ell}\vec{\eta}^\ell(x)\bigl(\bar{\eta}(x+1)-\bar{\eta}(y)\bigr)\mathbf{1}_{\mc G_{\ell_0}(x-\ell_0-1)}(\eta)\nu_\rho(d\eta)-\mc D_n(f)\bigg\}\\
+Ct\sup_{f\in {\bb L}^2(\nu_\rho)}\bigg\{\int \sum_{x\in\Z} V(x)\vec{\eta}^{\ell-1}(x+1)\bigl(\bar{\eta}(x)-\bar{\eta}(x+1)\bigr)\mathbf{1}_{\mc G_{\ell_0}(x-\ell_0-1)}(\eta)\nu_\rho(d\eta)-\mc D_n(f)\bigg\}.
\end{multline*}
Now by Lemma~\ref{path}, this is bounded by $Ct\ell \|V\|_{2,n}^2/n$, since 
\[\mathrm{Var}(\vec{\eta}^\ell(x))=\frac{\ell-1}{\ell}\mathrm{Var}(\vec{\eta}^{\ell-1}(x+1))=\frac{1}{\ell}.\]

\subsection{Summing all the errors} 
Putting together every estimate that is obtained for each member of the decomposition \eqref{eq:term1} -- \eqref{eq:term5}, it is quite straightforward to check the following inequality
\begin{multline*}
\mathbb{E}^n_{{\rho}}\bigg[\Big(\int_{0}^t \sum_{x\in\mathbb{Z}}V(x)\Big\{\bar{\eta}_{sn^{2}}(x)\bar{\eta}_{sn^{2}}(x+1)-\big(\vec{\eta}_{sn^{2}}^{\ell}(x)\big)^2+\frac{\chi(\rho)}{\ell}\Big\}ds\Big)^2\bigg]\\
\le  Ct\Big\{tn\; \ell \; (1-\rho^2)^{\ell_0/2}   + \frac{\ell_0^2}{n} + \frac{\ell}{n} +\frac{tn}{\ell^2}\Big\}\|V\|_{2,n}^2.
\end{multline*}
Choosing, for example $\ell_0 = n^{\frac{1}{3}}$ and $\ell=\varepsilon n$, Theorem \ref{theo:BG} follows.

\section{Corollaries of the second order Boltzmann-Gibbs principle}
\label{sec:corollary}
We recall in this section all the lemmas that have been used in Section \ref{sec:strategy}, and we prove them with the same arguments exposed in the proof of the second order Boltzmann-Gibbs principle. 


\begin{lemma} \label{lemma:1stBG}
For any function $V:\mathbb{Z}\to{\mathbb{R}}$ that satisfies
\eqref{vinl2} and any fixed $y\in\mathbb{Z}$,  $y\neq 0$
\[
\lim_{n\to\infty}\mathbb{E}^n_{{\rho}}\bigg[\Big(\int_{0}^t \frac{1}{\sqrt n}\sum_{x\in\mathbb{Z}}V(x)\bar{\eta}_{sn^{2}}(x)\bar{\eta}_{sn^{2}}(x+y)ds\Big)^2\bigg]
=0.\]
\end{lemma}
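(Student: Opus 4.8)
The plan is to mimic the strategy already used for Theorem~\ref{theo:BG}, but in a much simplified form: here there is no multi-scale argument, only a single "restriction + path + one-block" step, and the extra factor $n^{-1/2}$ in front makes the final bound go to zero without having to send $\varepsilon\to 0$. First, without loss of generality I may assume $y\geq 1$ (the case $y\leq -1$ follows by a reflection of the lattice, which leaves $\nu_\rho$ invariant). The function $\psi(\eta):=\bar\eta(0)\bar\eta(y)$ is a mean-zero local function in $\mathbb L^2(\nu_\rho)$ supported on $\{0,\dots,y\}\subset\{-\ell+1,\dots,\ell\}$ for $\ell:=y$ (which is fixed, independent of $n$). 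Applying Lemma~\ref{lem:restriction} with this $\psi$, this $\ell$, and a cutoff parameter $\ell_0=n^{\theta}$ for some $\theta\in(0,\tfrac12)$, the left-hand side of the claimed limit (without the $1/\sqrt n$) is bounded, up to constants, by
\[
Ct^2\, n\,\ell\,(1-\rho^2)^{\ell_0/2}\,\mathrm{Var}(\psi)\,\|V\|_{2,n}^2
\;+\;2\,\E_\rho^n\!\left[\Big(\int_0^t\sum_{x\in\bb Z}V(x)\,\tau_x\psi(\eta_{sn^2})\,\mathbf 1_{\mc G_{\ell_0}(x+\ell)}(\eta_{sn^2})\,ds\Big)^2\right].
\]
The first term, once multiplied by the prefactor $1/n$ coming from $(1/\sqrt n)^2$, behaves like $t^2\ell(1-\rho^2)^{\ell_0/2}\|V\|_{2,n}^2$, which vanishes as $n\to\infty$ since $\ell_0=n^\theta\to\infty$ (cf.\ Remark~\ref{rem:vanish}).

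For the second, restricted term, the key point is that $\psi=\bar\eta(0)\bar\eta(y)$ is invariant under the exchange $\eta\mapsto\eta^{0,y}$: indeed $\psi(\eta^{0,y})=\bar\eta(y)\bar\eta(0)=\psi(\eta)$. Hence I can introduce an auxiliary centered local function that is genuinely "movable": write, for instance,
\[
\bar\eta(0)\bar\eta(y)=\bar\eta(0)\bar\eta(y)-\vec\eta^{\,\ell}(0)\,\bar\eta(0)\;+\;\vec\eta^{\,\ell}(0)\,\bar\eta(0),
\]
where $\vec\eta^{\,\ell}(0)=\ell^{-1}\sum_{z=1}^{\ell}\bar\eta(z)$ (recall $\ell=y$). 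The first difference, after recentering, is of the form $\tau_x$ of a mean-zero function times $(\bar\eta(x+y)-\vec\eta^\ell(x))$ with the multiplicative factor $\bar\eta(x)$ having support disjoint from the bracket; this is precisely the structure handled by Proposition~\ref{prop:one-block} (the one-block estimate), which bounds its contribution by $Ct\,(\ell+\ell_0)^2 n^{-1}\,\mathrm{Var}(\varphi)\,\|V\|_{2,n}^2$. The leftover term $\vec\eta^\ell(0)\bar\eta(0)$ has a fixed-size support $\{0,1,\dots,\ell\}$ and, since it is centered and its translates along a sublattice of spacing $>2\ell+\ell_0$ are decorrelated under $\nu_\rho$, the Cauchy–Schwarz + stationarity argument used repeatedly in Section~\ref{sec:BG} bounds its square expectation by $Ct^2 n\,\|V\|_{2,n}^2$ — again with the constant depending only on $\ell=y$ and $\rho$.

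Collecting, the full expression (including the $1/n$ prefactor) is bounded by
\[
C(y,\rho)\,t\,\Big\{ tn\,\ell\,(1-\rho^2)^{\ell_0/2}+\frac{(\ell+\ell_0)^2}{n}+\frac{t}{1}\cdot\frac{1}{1}\cdot\frac{n}{n}\Big\}\|V\|_{2,n}^2,
\]
and with $\ell_0=n^{1/3}$ and $\ell=y$ fixed, every term tends to $0$ as $n\to\infty$; in particular the genuinely dangerous $Ct^2\|V\|_{2,n}^2$ piece coming from the leftover local term is killed by the $1/n$ factor, which is exactly why the $n^{-1/2}$ in the statement is essential and why no $\varepsilon\to0$ limit is needed here. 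I expect the main (mild) obstacle to be bookkeeping the recentering: one must check that after subtracting $\nu_\rho$-averages all the pieces are genuinely mean zero and have the claimed disjoint supports, so that Lemma~\ref{lem:restriction}, Lemma~\ref{path} and Proposition~\ref{prop:one-block} apply verbatim; there is no new analytic difficulty beyond what was already developed for Theorem~\ref{theo:BG}.
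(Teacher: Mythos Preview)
There is a genuine gap in your argument, coming from the choice $\ell=y$ fixed. After the restriction and the one-block step, the ``leftover'' term is
\[
\frac{1}{n}\,\E_\rho^n\bigg[\Big(\int_0^t\sum_{x\in\bb Z}V(x)\,\bar\eta_{sn^2}(x)\,\vec\eta_{sn^2}^{\,\ell}(x)\,ds\Big)^2\bigg].
\]
As you note, cross terms in $x$ vanish, so by Cauchy--Schwarz and stationarity this is bounded by
\[
\frac{1}{n}\,t^2\sum_{x\in\bb Z}V^2(x)\,\nu_\rho\big((\bar\eta(0)\vec\eta^{\,\ell}(0))^2\big)=\frac{t^2\,\chi(\rho)^2}{\ell}\,\|V\|_{2,n}^2.
\]
With $\ell=y$ fixed, this equals $t^2\chi(\rho)^2\|V\|_{2,n}^2/y$, which is a \emph{constant} in $n$ and does not tend to zero. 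Your sentence ``the genuinely dangerous $Ct^2\|V\|_{2,n}^2$ piece \dots\ is killed by the $1/n$ factor'' is precisely where the error lies: the $1/n$ has already been used to cancel the $n$ coming from $\sum_x V^2(x)=n\|V\|_{2,n}^2$; there is no extra factor left to kill $Ct^2/\ell$.

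The fix (and this is what the paper does) is to let $\ell\to\infty$ with $n$: choose $\ell=n^\theta$ for some $\theta\in(0,1)$ and take $\ell_0=\ell$. Then the leftover term is $O(t^2/\ell)=O(t^2 n^{-\theta})\to 0$, the one-block term is $O(t\,\ell^2/n^2)=O(t\,n^{2\theta-2})\to 0$, and the restriction error is $O(t^2\ell(1-\rho^2)^{\ell/2})\to 0$. The only constraint is $\ell\ge |y|$, which is automatic for large $n$. So your overall strategy (restriction $+$ one-block $+$ crude Cauchy--Schwarz on the remainder) is exactly right; the single missing idea is that the averaging box must \emph{grow} with $n$ to make the remainder vanish.
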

\begin{proof}
To prove this lemma, we fix $\ell\in\mathbb{N}$ to be chosen ahead. Then, we use an elementary inequality, and we bound the expectation in the statement of the lemma from above by 
\begin{align*}
 &2\mathbb{E}^n_{{\rho}}\bigg[\Big(\int_{0}^t \frac{1}{\sqrt n}\sum_{x\in\mathbb{Z}}V(x)\bar{\eta}_{sn^{2}}(x)\big(\bar{\eta}_{sn^{2}}(x+y)-\vec{\eta}^\ell_{sn^2}(x)\big)\,ds\Big)^2\bigg] \\
 +& 2\mathbb{E}^n_{{\rho}}\bigg[\Big(\int_{0}^t \frac{1}{\sqrt n}\sum_{x\in\mathbb{Z}}V(x)\bar{\eta}_{sn^2}(x)\vec{\eta}^\ell_{sn^{2}}(x)\,ds\Big)^2\bigg]. 
\end{align*}
From Lemma~\ref{lem:restriction} and Proposition \ref{prop:one-block}, for $\ell\geq |y|$ and choosing $\ell_0=\ell$, the first expectation is bounded from above by $C\big(t \frac{\ell^2}{n^2}+t^2\ell(1-\rho^2)^{\ell/2}\big)$. 

 From the Cauchy-Schwarz inequality, since $\nu_\rho\big(\bar{\eta}(x)\vec{\eta}^\ell(x)\bar{\eta}(x')\vec{\eta}^\ell(x')\big)=0$ as soon as $x\neq {x'}$, the  second expectation  is bounded from above by $\frac{t^2}{\ell}.$ Taking, for example, $\ell= n^\theta$ with $\theta<1$  the result follows. 
\end{proof}

\begin{lemma}
\label{lemma:1stBGdeg_three}
For any  function $V:\mathbb{Z}\to{\mathbb{R}}$   that satisfies
\eqref{vinl2}, for any $(y,z)\in\mathbb{Z}^2$, $0<y<z$,
\[
\lim_{n\to\infty}\mathbb{E}^n_{{\rho}}\bigg[\Big(\int_{0}^t \sum_{x\in\mathbb{Z}}V(x)\bar{\eta}_{sn^{2}}(x)\bar{\eta}_{sn^{2}}(x+y)\bar{\eta}_{sn^{2}}(x+z)ds\Big)^2\bigg]
=0.\]
\end{lemma}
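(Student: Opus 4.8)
The strategy mirrors the proof of Lemma~\ref{lemma:1stBG}, with one extra layer due to the degree-three structure. The plan is to replace the innermost occupation variable $\bar\eta_{sn^2}(x+z)$ by a box average, peel it off, and iterate. Concretely, fix $\ell\in\bb N$ to be chosen at the end (of the form $\ell=n^\theta$, $0<\theta<1$). First I would use the elementary inequality $(a+b)^2\le 2a^2+2b^2$ to bound the expectation in the statement by $2A_n+2B_n$, where
\[
A_n:=\E_\rho^n\bigg[\Big(\int_0^t\sum_{x\in\bb Z}V(x)\,\bar\eta_{sn^2}(x)\bar\eta_{sn^2}(x+y)\big(\bar\eta_{sn^2}(x+z)-\vec\eta^{\ell}_{sn^2}(x+z')\big)\,ds\Big)^2\bigg]
\]
for a suitable shift $z'$ chosen so that the support of the multiplicative factor $\bar\eta(x)\bar\eta(x+y)$ is disjoint from the averaging box $\Lambda_{x+z'}^{\ell}$ (take $z'=z$, so the box is $\{x+z+1,\dots,x+z+\ell\}$, which is disjoint from $\{x,x+y\}$ since $0<y<z$), and
\[
B_n:=\E_\rho^n\bigg[\Big(\int_0^t\sum_{x\in\bb Z}V(x)\,\bar\eta_{sn^2}(x)\bar\eta_{sn^2}(x+y)\,\vec\eta^{\ell}_{sn^2}(x+z)\,ds\Big)^2\bigg].
\]

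For the term $A_n$: the multiplicative function $\psi_x:=\bar\eta(x)\bar\eta(x+y)$ is centered (since $y\neq 0$) and local with support of size bounded by a constant $z+1$, while $\bar\eta(x+z)-\vec\eta^{\ell}(x+z)=\frac1\ell\sum_{w=1}^\ell(\eta(x+z)-\eta(x+z+w))$ has support disjoint from that of $\psi_x$. So I would first apply Lemma~\ref{lem:restriction} (with $\psi$ the full degree-three local function and $\ell_0=\ell$, conditioning on a good box $\mc G_{\ell_0}$ placed to the right of the whole support), which produces an error $Ct^2 n\,(1-\rho^2)^{\ell_0/2}\|V\|_{2,n}^2$ plus the same expectation restricted to configurations with a mobile cluster; then on the restricted expectation I would invoke (the argument behind) Proposition~\ref{prop:one-block} — more precisely the path argument of Lemma~\ref{path} applied with $\varphi=\tau_{\cdot}(\bar\eta(0)\bar\eta(y))$, which is invariant under the exchange $\eta^{x+z,x+z+w}$ — to bound it by $Ct\,\frac{(\ell+\ell_0)^2}{n}\,\mathrm{Var}(\varphi)\|V\|_{2,n}^2\le Ct\,\frac{\ell^2}{n}\|V\|_{2,n}^2$. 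Here the normalizing $1/\sqrt n$ is absent compared to Lemma~\ref{lemma:1stBG}, but this is harmless: the bound $Ct\ell^2/n$ still vanishes for $\ell=n^\theta$, $\theta<\frac12$.

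For the term $B_n$: now the surviving variable is a box average, so I would peel off the \emph{second} variable and repeat. Write $\vec\eta^{\ell}(x+z)=\frac1\ell\sum_{w=1}^\ell\bar\eta(x+z+w)$ and bound $B_n\le 2B_n'+2B_n''$ where $B_n'$ replaces $\bar\eta(x+y)$ by an average $\vec\eta^{\ell}$ over a box placed between $x$ and $x+z$ (disjoint from $\{x\}$ and from the first box), handled again by Lemma~\ref{lem:restriction}+Lemma~\ref{path} since $\bar\eta(x)$ times an indicator is a legitimate centered multiplicative factor; and $B_n''$ now has the form $\E_\rho^n[(\int_0^t\sum_x V(x)\bar\eta_{sn^2}(x)\,\vec\eta^{\ell}_{sn^2}(\cdot)\,\vec\eta^{\ell}_{sn^2}(x+z)\,ds)^2]$. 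For $B_n''$ I would simply use Cauchy--Schwarz and stationarity: bounding by $t^2\int(\sum_x V(x)\bar\eta(x)\vec\eta^\ell(\cdot)\vec\eta^\ell(x+z))^2 d\nu_\rho$ and exploiting that, for $|x-x'|$ larger than the combined box sizes, the variables decorrelate (the covariance vanishes because $\bar\eta(x)$ is centered and independent of the rest), one gets a diagonal sum bounded by $Ct^2\|V\|_{2,n}^2\,\mathrm{Var}(\bar\eta(0)\vec\eta^\ell\vec\eta^\ell)\cdot(\text{box size})$. Since $\mathrm{Var}(\vec\eta^\ell)=\chi(\rho)/\ell$ and the three factors are (nearly) independent, this variance is of order $1/\ell^2$, while the extra box-size factor from off-diagonal counting is $O(\ell)$, giving $B_n''\le Ct^2/\ell\to 0$. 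Collecting, every piece is $O(t^2\ell^2/n)+O(t^2/\ell)+O(t^2 n(1-\rho^2)^{\ell/2})$, and choosing $\ell=n^\theta$ with, say, $\theta=\frac13$ makes all terms vanish as $n\to\infty$, which proves the lemma.

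The main obstacle I anticipate is the careful bookkeeping of supports and box placements when peeling off successive variables: at each stage one must place the averaging box and the ``good box'' $\mc G_{\ell_0}$ so that (i) they are disjoint from the support of the remaining multiplicative factor, (ii) the remaining factor is still a centered local function invariant under the relevant exchanges (so that Lemma~\ref{path} applies), and (iii) the path used to transport particles stays within a window of size $O(\ell+\ell_0)$. None of the individual estimates is hard — they are all instances of machinery already developed in Section~\ref{sec:BG} — but ensuring the geometric configuration is consistent across the two rounds of peeling (degree three $\to$ degree two $\to$ degree one), and in particular that $0<y<z$ leaves enough room to fit the boxes, is where the argument needs genuine care. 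A minor secondary point is that, unlike Lemma~\ref{lemma:1stBG}, there is no $1/\sqrt n$ prefactor here, so one must check that the bare bounds $t^2\ell^2/n$ and $t^2/\ell$ (rather than $t\ell^2/n^2$) still close; they do, provided $\theta\in(0,\frac12)$.
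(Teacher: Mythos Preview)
Your proposal has a genuine gap that prevents the single-scale argument from closing. The error is in your Cauchy--Schwarz bound on $B_n''$: you claim it is $Ct^2\|V\|_{2,n}^2/\ell$, but you have dropped a factor of $n$. After bounding by $t^2\int\big(\sum_x V(x)g_x(\eta)\big)^2 d\nu_\rho$ and using that the cross-terms vanish for $|x-x'|>C\ell$ (or even exactly, if $\bar\eta(x)$ is kept as the leftmost factor), the diagonal contribution is $t^2\cdot(\text{overlap count})\cdot\sum_x V(x)^2\cdot\mathrm{Var}(g_0)$, and $\sum_x V(x)^2=n\|V\|_{2,n}^2$, not $\|V\|_{2,n}^2$. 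So $B_n''\ge Ct^2 n/\ell^2$ (at best, averaging two factors). Since $A_n\le Ct\ell^2/n$ forces $\ell=o(\sqrt n)$, you then get $n/\ell^2\to\infty$, and the argument does not close for any choice of $\theta$. (There is also a geometric slip: a box of size $\ell\to\infty$ cannot be placed ``between $x$ and $x+z$'' when $z$ is a fixed integer.)

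The paper's fix is precisely to introduce a \emph{second}, larger scale $L$. After the first replacement at scale $\ell$ (cost $t\ell^2/n$), the remaining multiplicative factor carries a variance gain of order $1/\ell$, which allows subsequent replacements to be done at scale $L$ with cost $tL^2/(n\ell)$; the final Cauchy--Schwarz term then has three box averages of sizes $\ell,L,L$ and is bounded by $t^2(L+\ell)n/(\ell L^2)\approx t^2 n/(\ell L)$. Choosing $\ell\sim\epsilon\sqrt n$ and $L\sim\epsilon n^{3/4}$ makes every piece $O(\epsilon)$ or $o_n(1)$, and since $\epsilon$ is arbitrary the limit vanishes. The two-scale structure is not an aesthetic choice but is forced by the arithmetic: with a single scale the constraints $\ell^2\ll n$ (from the first replacement) and $\ell^2\gg n$ (from the residual) are incompatible.
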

\begin{proof}
To prove this lemma, again, we use an {elementary} inequality, and for two integers $\ell\leq L$ we bound the expectation in the statement of the lemma from above by 
\begin{align} \label{eqA}
&C\mathbb{E}^n_{{\rho}}\bigg[\Big(\int_{0}^t \sum_{x\in\mathbb{Z}}V(x)\bar{\eta}_{sn^{2}}(x)\bar{\eta}_{sn^{2}}(x+y)\Big(\bar{\eta}_{sn^{2}}(x+z)-\vec{\eta}^\ell_{sn^2}(x+z)\Big)\,ds\Big)^2\bigg]\\ 
& + C\mathbb{E}^n_{{\rho}}\bigg[\Big(\int_{0}^t \sum_{x\in\mathbb{Z}}V(x)\Big(\bar{\eta}_{sn^{2}}(x)-\bar{\eta}_{sn^2}(x-L)\Big)\bar{\eta}_{sn^{2}}(x+y)\vec{\eta}^\ell_{sn^2}(x+z)\; ds\Big)^2\bigg] \label{eqB} \\
& + C\mathbb{E}^n_{{\rho}}\bigg[\Big(\int_{0}^t \sum_{x\in\mathbb{Z}}V(x)\Big(\bar{\eta}_{sn^{2}}(x-L)-\vecleft{\eta}^L_{sn^2}(x-L)\Big)\bar{\eta}_{sn^{2}}(x+y)\vec{\eta}^\ell_{sn^2}(x+z)\; ds\Big)^2\bigg] \label{eqC} \\
& + C\mathbb{E}^n_{{\rho}}\bigg[\Big(\int_{0}^t \sum_{x\in\mathbb{Z}}V(x)\vecleft{\eta}^L_{sn^2}(x-L)\Big(\bar{\eta}_{sn^{2}}(x+y)-\vecleft{\eta}^L_{sn^2}(x+y)\Big)\vec{\eta}^\ell_{sn^2}(x+z)\; ds\Big)^2\bigg]
\label{eqD} \\
& + C\mathbb{E}^n_{{\rho}}\bigg[\Big(\int_{0}^t \sum_{x\in\mathbb{Z}}V(x)\vecleft{\eta}^L_{sn^2}(x-L)\vecleft{\eta}^L_{sn^2}(x+y)\vec{\eta}^\ell_{sn^2}(x+z)\; ds\Big)^2\bigg]. \label{eqE} \end{align}
From Lemma~\ref{lem:restriction} and Proposition \ref{prop:one-block}, it is easy to bound the first expectation \eqref{eqA}  by $C\big(t\frac{\ell^2}{n}+t^2n\ell(1-\rho^2)^{\ell/2}\big)$. Similarly, {the second term \eqref{eqB} can be treated with Lemma \ref{path} and the exact same ideas as in the proof of Proposition \ref{prop:one-block}}, and one can bound it, as well as the  third one \eqref{eqC}, by $\frac{C}{\ell} \big(t \frac{L^2}{n}+t^2nL(1-\rho^2)^{L/2}\big)$. Finally the fourth one \eqref{eqD} can be bounded by $\frac{C}{\ell L} \big(t\frac{L^2}{n}+t^2nL(1-\rho^2)^{L/2}\big)$. 

By the Cauchy-Schwarz inequality, and using the fact that, for any $x,x'$ such that $|x-x'| \geq L+\ell$, we have
\[
\nu_\rho\big(\vecleft{\eta}^L(x-L)\vecleft{\eta}^L(x+y)\vec{\eta}^\ell(x+z)\vecleft{\eta}^L(x'-L)\vecleft{\eta}^L(x'+y)\vec{\eta}^\ell(x'+z)\big)=0,
\] the last expectation \eqref{eqE} is bounded from above by $\frac{t^2(\ell+L)n}{\ell L^2}.$Taking, for example, $\ell=  \epsilon \sqrt n$ and $L=\epsilon n^{3/4}$ and letting $n\to\infty$, the proof follows. 
\end{proof}

\begin{remark}\label{rem:lemma6.2_higher}
We note that the result of the  previous lemma is true for any polynomial of degree greater or equal to three. The proof follows exactly the same arguments as above. The idea is the following. Suppose that the polynomial has degree $k$ and it is written as $\bar{\eta}_{sn^{2}}(x)\bar{\eta}_{sn^{2}}(x+x_1)\bar{\eta}_{sn^{2}}(x+x_2)\cdots \bar{\eta}_{sn^{2}}(x+x_k)$ and that $0< x_1< x_2< \cdots< x_k$. The first step is to replace the rightmost occupation site, namely, $\bar{\eta}_{sn^{2}}(x+x_k)$ by its occupation average on a box of size $\ell$ to its right. The second step consists in shifting by $L$ the leftmost occupation site, namely $\bar{\eta}_{sn^{2}}(x)$ to its left. Then we replace this shifted occupation site, namely $\bar{\eta}_{sn^{2}}(x-L)$ by its occupation average in a box of size $L$ to its left. Then we replace the occupation site of $\bar{\eta}(x+x_1)$ by its occupation average on a box of size L to its left. The bounds for each one of the aforementioned replacements coincide with those obtained above.
\end{remark}

%
%

\section{Acknowledgments}
This work benefited from the support of the project EDNHS
ANR-14-CE25-0011 of the French National Research Agency (ANR). The work of M.S. was also supported by  CAPES (Brazil) and IMPA (Instituto de Matematica Pura e Aplicada, Rio de Janeiro) through a post-doctoral fellowship, and  in part by the Labex CEMPI (ANR-11-LABX-0007-01). O.B. benefited
from financial support from the seventh Framework Program of the European Union
(7ePC/2007-2013), grant agreement n. 266638. P.G. thanks FCT / Portugal for support through the project UID/MAT/04459/2013.


\begin{thebibliography}{}



\bibitem{BerG}
Bertini, L. and Giambattista, G.: \emph{Stochastic Burgers and KPZ equations from particle systems},
Comm. Math. Phys. {\bf 183}, 3, 571--607 (1997).




\bibitem{DG} Dittrich, P. and   G\"artner, J.: \emph{A central limit theorem for
  the weakly asymmetric simple exclusion process}. Math. Nachr.  {\bf
    151}, 75--93 (1991).

\bibitem{FGS}
Franco, T., Gon\c calves, P. and Simon, M.: \emph{Crossover to the stochastic Burgers equation for the WASEP with a slow bond}, Communications in Mathematical Physics {\bf 346}, 3, 801--838  (2016).

\bibitem{Gar} Gartner, J.: \emph{Convergence towards Burger's equation and propagation of chaos for weakly
asymmetric exclusion processes}, Stochastic Processes and their Applications {\bf 27}, 233--260 (1988).

\bibitem{G}
Gon\c calves, P.: \emph{Central Limit Theorem for a Tagged Particle in Asymmetric Simple Exclusion},
Stochastic Process and their Applications {\bf 118}, 474--502 (2008).

\bibitem{G1}
Gon\c calves, P.: \emph{Equilibrium Fluctuations for the Totally Asymmetric Zero-Range Process}, J. Stat.
Physics {\bf 138}, 4-5, 645--661 (2010).

\bibitem{gj2014} Gon\c calves, P. and   Jara, M.: \emph{Nonlinear fluctuations of weakly asymmetric interacting particle systems}. Archive for Rational Mechanics and Analysis, Volume 212, Issue 2, 597--644 (2014).

\bibitem{GJ4} Gon\c calves, P. and Jara, M.: \emph{The Einstein relation for the KPZ equation}, J. Stat.
Physics {\bf 158}, 6, 1262--1270 (2015).

\bibitem{GJS}
Gon\c calves, P., Jara, M. and Sethuraman, S.: \emph{A stochastic Burgers equation from a class of microscopic interactions}, Ann. Probab, Volume 43, Issue 1, 286--338 (2015). 



\bibitem{GLT}
Gon\c calves, P., Landim, C. and   Toninelli, C.: \emph{Hydrodynamic limit for a particle system with degenerate rates.}, {Ann. IHP Probab. Stat.} {\bf 45}, 887--909 (2009).



\bibitem{GS} Gon\c calves, P., Jara. M. and Simon, M.: \emph{Second order Boltzmann-Gibbs principle for polynomial functions and applications},  Preprint Arxiv:1507.06076 (2016).

\bibitem {GubJar}
Gubinelli, M. and  Jara, M.: \emph{ Regularization by noise and stochastic Burgers equations}, Stochastic
Partial Differential Equations: Analysis and Computations, {\bf 1}, 2, 325--350 (2013).

\bibitem{GubPer} Gubinelli, M. and  Perkowski, N.: \emph{Energy solutions of KPZ are unique}, Preprint Arxiv:1508.07764 (2015).

\bibitem{Hai}  Hairer, M.: \emph{Solving the KPZ equation}, Annals of Maths {\bf 178}, 2, 559--664 (2013).

\bibitem{Hai2}  Hairer, M.: \emph{A theory of regularity structures}, Inventiones Math. {\bf 198}, 2, 269--504 (2014).

\bibitem{Nag}
Nagahata, Y.: \emph{Lower bound estimate of the spectral gap for simple exclusion process with degenerate rates}, Electronic Journal of Probability, Volume 17, 1--19 (2012).

\bibitem{KPZ}  Kardar, M., Parisi, G. and Zhang, Y. C.: \emph{Dynamic Scaling of Growing Interfaces}, Phys. Rev.
Lett. {\bf 56}, 9, 889--892 (1986).

\bibitem{kl}  Kipnis, C. and Landim, C.: \emph{Scaling limits of interacting
 particle systems}. Grundlehren der Mathematischen Wissenschaften
 [Fundamental Principles of Mathematical Sciences], Volume 320.
  Springer-Verlag, Berlin (1999).

\bibitem{KLO} Komorowski, T.,  Landim, C. and Olla, S.: \emph{Fluctuations in Markov processes}, Grundlehren der Mathematischen
Wissenschaften [Fundamental Principles of Mathematical Sciences], Volume 345, Springer,
Heidelberg (2012).


\bibitem{Spohn}
Spohn, H.: \emph{Large Scale Dynamics of Interacting Particles}. Springer-Verlag, New York (1991).

\end{thebibliography}
\end{document}